\documentclass[12pt]{amsart}

\usepackage[latin1]{inputenc}
\usepackage{amssymb,amsmath,amsfonts, amsthm}
\usepackage{enumerate}
\usepackage{times}

\usepackage{bbding}
\usepackage[margin=2.0cm]{geometry}
\usepackage{cite}

\topmargin -1.1cm \textwidth 17.5cm \textheight 23.5cm \oddsidemargin -0.4cm \evensidemargin -0.4cm

\newtheorem{theorem}{Theorem}
\newtheorem{lemma}[theorem]{Lemma}
\newtheorem*{claim}{Claim}
\newtheorem{proposition}[theorem]{Proposition}
\newtheorem{corollary}[theorem]{Corollary}

\def\abs#1{\vert#1\vert}
\newcommand{\infconv}{\mathbin{\Box}} 

\newcommand{\cP}{{\mathcal P}} 
 \newcommand{\cK}{{\mathcal K}} 
 \newcommand{\cCe}{{\mathcal C}_{\text{epi}}^{n+1}} 
\newcommand{\fconvfpa}{\operatorname{Conv_{p.a.}}(\R^n;\R)} 
\newcommand{\fconvpa}{\operatorname{Conv_{p.a.}}(\R^n)} 

\newcommand{\MD}{\operatorname{det}} 
\newcommand{\Hess}{\operatorname{D}^2\!} 
\newcommand{\sln}{\operatorname{SL}(n)}

\DeclareMathOperator{\oZ}{\operatorname{Z}}
\DeclareMathOperator{\otZ}{\operatorname{\tilde Z}}
\DeclareMathOperator{\obZ}{\operatorname{\bar Z}}

\newcommand{\sq}{\mathbin{\vcenter{\hbox{\rule{.3ex}{.3ex}}}}} 

\newcommand{\R}{{\mathbb R}}
\newcommand{\N}{{\mathbb N}}

\newcommand{\dom}{\operatorname{dom}}

\newcommand{\hm}{{\mathcal H}}

\renewcommand{\d}{\,\mathrm{d}}
\newcommand{\ind}{{\rm\bf I}}


\newcommand{\fconv}{{\mbox{\rm Conv}_{\text{\rm coe}}(\R^n)}} 
\newcommand{\fconvs}{{\mbox{\rm Conv}_{{\rm sc}}(\R^n)}} 
\newcommand{\fconvsone}{{\mbox{\rm Conv}_{{\rm sc}}(\R)}} 
\newcommand{\fconvfone}{{\mbox{\rm Conv}(\R; \R)}} 
\newcommand{\fconvx}{{\mbox{\rm Conv}(\R^n)}}
\newcommand{\fconvi}{{\mbox{\rm Conv}_{\text{\rm od}}(\R^n)}}
\newcommand{\fconvf}{{\mbox{\rm Conv}(\R^n; \R)}}

\begin{document}

\title[]{A homogeneous decomposition theorem\\ for valuations on convex functions}

\author{Andrea Colesanti}
\address{Dipartimento di Matematica e Informatica ``U. Dini''
Universit\`a degli Studi di Firenze,
Viale Morgagni 67/A - 50134, Firenze, Italy}
\email{andrea.colesanti@unifi.it}

\author{Monika Ludwig}
\address{Institut f\"ur Diskrete Mathematik und Geometrie,
Technische Universit\"at Wien,
Wiedner Hauptstra\ss e 8-10/1046,
1040 Wien, Austria}
\email{monika.ludwig@tuwien.ac.at}

\author{Fabian Mussnig}
\address{Institut f\"ur Diskrete Mathematik und Geometrie,
Technische Universit\"at Wien,
Wiedner Hauptstra\ss e 8-10/1046,
1040 Wien, Austria}
\email{fabian.mussnig@alumni.tuwien.ac.at}


\date{}

\begin{abstract} 
The existence of a homogeneous decomposition for continuous and epi-translation invariant valuations on super-coercive functions is established. Continuous and epi-translation invariant valuations that are epi-homogeneous of degree $n$ are classified. By duality, corresponding results are obtained for valuations on finite-valued convex functions.

\medskip
{\noindent 2000 AMS subject classification: 52B45 (26B25, 52A21, 52A41)}
\end{abstract}

\keywords{Convex function, valuation, homogeneous decomposition}

\maketitle

\section{Introduction}

Given a space of real-valued functions $X$, we consider real-valued valuations on $X$, that is, functionals $\oZ\colon X\to\R$ such that
\begin{equation}\label{val_def}
\oZ(u\vee v)+\oZ(u\wedge v)=\oZ(u)+\oZ(v)
\end{equation}
for every $u,v\in X$ with $u\vee v$ and $u\wedge v\in X$, where $\vee$ and $\wedge$ denote the point-wise maximum and 
minimum, respectively. For $X$, the space of indicator functions of convex bodies (that is, compact convex sets) in $\R^n$, we obtain the classical notion of valuation on convex bodies. 
Here strong structure and classification theorems have been established over the last seventy years (see  \cite{Alesker99,Alesker01,Bernig:Fu, Haberl_sln,  Ludwig:Reitzner2,  HaberlParapatits_moments, Haberl:Parapatits_centro, BernigFuSolanes} for some recent results and  \cite{Hadwiger:V,Klain:Rota, Schneider:CB2}  for information on the classical theory). The aim of this article is to obtain such results also in the functional setting. In particular, we will establish a homogeneous decomposition result \`a la McMullen \cite{McMullen77}.

Valuations on function spaces have only recently started to attract attention. Classification results were obtained for $L_p$ and Sobolev spaces \cite{LiMa, Tsang:Lp, Ludwig:SobVal, Ludwig:Fisher, Kone, Ma2016, Tuo_Wang_semi},  spaces of quasi-convex functions \cite{ColesantiLombardi, ColesantiLombardiParapatits}, of Lipschitz functions \cite{ColesantiPagniniTradaceteVillanueva}, of definable functions \cite{BaryshnikovGhristWright} and on Banach lattices \cite{TradaceteVillanueva_imrn}. Spaces of convex functions play a special role because of their close connection to convex bodies. Here classification results were obtained for $\sln$ invariant and for monotone valuations in \cite{ColesantiLudwigMussnig, ColesantiLudwigMussnig17, Mussnig19, Mussnig_super, Mussnig_log, CavallinaColesanti} and the connection to valuations on convex bodies was explored by Alesker \cite{Alesker_cf}. While the theory of translation invariant valuations is well developed for convex bodies, for convex functions the corresponding theory did not exist till now. We introduce the notion of {\em epi-translation invariance} to build such a theory. In particular, we will show that on the space of super-coercive convex functions there is a homogeneous decomposition for continuous and  epi-translation invariant valuations and there exist non-trivial such valuations for each degree of epi-homogeneity while on the larger space of coercive convex functions all continuous and epi-translation invariant valuations are constant. 

The general space of (extended real-valued) convex functions on $\R^n$ is defined as
\begin{equation*}
\fconvx=\{u\colon\R^n\to\R\cup\{+\infty\}\colon u\ \mbox{ is convex and lower semicontinuous},\ u\not\equiv+\infty\}.
\end{equation*}
It is equipped with the topology induced by  epi-convergence (see Section \ref{prelim_functions}). Continuity of valuations defined on $\fconvx$, or on subsets
of $\fconvx$, will be always with respect to this topology. The space $\fconvx$ is a standard space in convex analysis (see \cite{RockafellarWets}) and important in many applications. 
As we will show, $\fconvx$ is too large for our purposes. We will be mainly interested in two of its subspaces. The first is formed by {\em coercive} functions,
\begin{equation*}
\fconv=\left\{u\in\fconvx\colon\lim_{\vert  x\vert \to+\infty}u(x)=+\infty\right\},
\end{equation*}
where $\vert x\vert$ is the Euclidean norm of $x\in\R^n$.
The second is formed by {\em super-coercive} functions,
\begin{equation*}
\fconvs=\left\{u\in\fconvx\colon\lim_{\vert  x\vert \to+\infty}\frac{u(x)}{\vert  x\vert }=+\infty\right\}.
\end{equation*}
The space of super-coercive convex functions is related to another subspace of $\fconvx$, formed by convex functions with finite values,
$$
\fconvf=\big\{v\in\fconvx\colon v(x)<+\infty\,\text{ for all }\,x\in\R^n\big\}.
$$
Indeed, $v\in\fconvf$ if and only if its standard conjugate or Legendre transform $v^*$ belongs to $\fconvs$ (see Section \ref{dual}). 
\goodbreak

\subsection{}
One of the most important structural results for valuations on convex bodies is the existence of a homo\-geneous decomposition for translation invariant valuations. It was conjectured  by Hadwiger and established
by McMullen \cite{McMullen77} (see Section \ref{prelim_bodies}). Our first aim is to establish such a result for valuations on convex functions. We define \emph{epi-multiplication} by setting for $u\in \fconvx$ and $\lambda>0$, 
$$
\lambda \sq u(x)=\lambda\, u\left(
\frac x\lambda
\right)
$$ 
for $x\in\R^n$. From a geometric point of view, this operation has the following meaning: the epigraph of $\lambda \sq u$ is obtained by  rescaling the epigraph of $u$ by the factor $\lambda$. We extend the definition of epi-multiplication to $0\sq u(x) = 0$ if $x=0$ and $0\sq u(x)=+\infty$ if $x\neq 0$. It is easy to see that $u\in\fconvs$ implies $\lambda \sq u\in\fconvs$ for $\lambda\geq 0$.
A functional $\oZ:\fconvs\to \R$ is called \emph{epi-homogeneous} of degree $\alpha\in\R$ if
$$
\oZ(\lambda \sq u)=\lambda^\alpha \oZ(u)
$$
for all $u\in\fconvs$ and $\lambda> 0$. Here and in the following corresponding definitions will be used for $\fconvx$ and its subspaces.

We call   $\oZ:\fconvs\to \R$ \emph{translation invariant} if
$\oZ(u\circ\tau^{-1})=\oZ(u)$ 
for every $u\in\fconvs$ and every translation $\tau:\R^n\to\R^n$. If $u\in\fconvs$ then $u\circ \tau^{-1}\in\fconvs$ as well.  
We say that $\oZ$ is {\em vertically translation invariant} if 
$$
\oZ(u+\alpha)=\oZ(u)
$$
for  all $u\in\fconvs$ and $\alpha\in\R$. If $\oZ$ is both translation invariant and vertically translation invariant, then $\oZ$ is called \emph{epi-translation invariant}. As we will see, the set of continuous, epi-translation invariant valuations on $\fconvs$ is non-empty. Note that a functional $\oZ$ is epi-translation invariant if for all $u\in\fconvs$ the value $\oZ(u)$ is not changed by translations of the epigraph of $u$.

The following result establishes a homogeneous decomposition for continuous and  epi-translation invariant  valuations on $\fconvs$.

\begin{theorem}\label{McM sc} If $\,\oZ: \fconvs\to \R$ is a continuous and epi-translation invariant valuation, then there are continuous and epi-translation invariant valuations $\oZ_0, \dots, \oZ_n: \fconvs \to \R$ such that 
$\oZ_i$ is epi-homogeneous of degree  $i$  and 
$\oZ=\oZ_0+\dots +\oZ_n$.
\end{theorem}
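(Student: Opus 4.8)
The plan is to mimic McMullen's argument for convex bodies, transported through the natural dictionary between super-coercive convex functions and convex bodies. The key point is that epi-multiplication $\lambda \sq u$ plays the role of dilation, and the claim to establish is that for fixed $u$ the map $\lambda \mapsto \oZ(\lambda \sq u)$ is a polynomial in $\lambda$ of degree at most $n$; once this is known, reading off the coefficients and checking that each coefficient is itself a continuous, epi-translation invariant valuation that is epi-homogeneous of the corresponding degree is the standard bookkeeping.

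First I would fix a building block. Given finitely many points $x_1, \dots, x_k \in \R^n$ and values $t_1, \dots, t_k \in \R$, one forms the ``epi-polytope'' whose epigraph is the translate-sum of vertical rays over the points $(x_j, t_j)$; concretely one works with functions of the form $u = \min\{\ell_1, \dots, \ell_k\}$-type constructions, or more conveniently with functions whose epigraph is a polyhedral set, and uses the valuation property \eqref{val_def} together with inclusion--exclusion to express $\oZ$ on a dilate of such a function. The essential computation is that if $P$ is such a piecewise affine object and one applies $\lambda \sq$, the combinatorics is exactly that of Minkowski sums of dilated polytopes (the epigraph of $\lambda_1 \sq u_1$ ``plus'' $\lambda_2 \sq u_2$ in the epi-sense corresponds to an infimal convolution whose epigraph is the Minkowski sum of the two epigraphs), so McMullen's polynomiality of mixed volumes is available at the level of epigraphs. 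From this one deduces that $\lambda \mapsto \oZ(\lambda \sq u)$ is polynomial of degree $\le n$ for all $u$ in a dense subclass of piecewise affine functions, with the bound $n$ coming from the ambient dimension $\R^n$ (the epigraphs live in $\R^{n+1}$ but the extra vertical direction contributes degree $\le 1$ and is controlled by vertical translation invariance, which forces the vertical contribution to be trivial, leaving degree $\le n$).

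Next I would pass from the dense subclass to all of $\fconvs$: since $\oZ$ is continuous for epi-convergence and $\lambda \sq u \to \lambda_0 \sq u$ epi-converges as $\lambda \to \lambda_0$, and since any $u \in \fconvs$ is an epi-limit of piecewise affine super-coercive functions, the pointwise limit of polynomials of degree $\le n$ of the form $\lambda \mapsto \oZ(\lambda \sq u_m)$ is again such a polynomial; uniform-on-compacta control follows from a Vandermonde/finite-difference argument (evaluate at $n+2$ fixed values of $\lambda$ and use that the $(n+1)$-st finite difference vanishes in the limit). Hence $\lambda \mapsto \oZ(\lambda \sq u) = \sum_{i=0}^n \lambda^i \oZ_i(u)$ for uniquely determined functionals $\oZ_i$. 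Finally I would verify the three required properties of each $\oZ_i$: epi-homogeneity of degree $i$ is immediate from $\mu \sq (\lambda \sq u) = (\mu\lambda)\sq u$ and comparing coefficients; continuity follows because $\oZ_i(u)$ is a fixed linear combination (Lagrange interpolation in $\lambda$) of the values $\oZ(\lambda_0 \sq u), \dots, \oZ(\lambda_{n} \sq u)$, each continuous in $u$; the valuation property and epi-translation invariance are inherited coefficientwise from $\oZ$, using that $\lambda \sq (u \vee v) = (\lambda \sq u) \vee (\lambda \sq v)$ and likewise for $\wedge$, and that epi-translation commutes with epi-multiplication up to a translation that leaves $\oZ$ invariant.

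The main obstacle I expect is the first step: establishing the polynomiality of $\lambda \mapsto \oZ(\lambda \sq u)$ on the piecewise affine subclass. The subtlety is that the natural ``addition'' making epigraphs add as Minkowski sums is infimal convolution, and one must check both that the relevant infimal convolutions of super-coercive piecewise affine functions stay in $\fconvs$ and that the lattice operations $\vee, \wedge$ interact with this addition exactly as unions and intersections of polyhedra do, so that McMullen's inclusion--exclusion/polynomiality machinery applies verbatim at the level of epigraphs in $\R^{n+1}$. Handling the unbounded (recession/vertical) directions of these epigraphs carefully — ensuring the degree bound is $n$ and not $n+1$, which is exactly where vertical translation invariance must be used — is the technical heart of the argument.
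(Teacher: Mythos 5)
Your overall strategy is sound and is in fact the same as the paper's at the level of bookkeeping: establish polynomiality of $\lambda \mapsto \oZ(\lambda \sq u)$ in degree $\le n$, read off the coefficients $\oZ_i$ by interpolation (Vandermonde/Lagrange), and verify coefficientwise that each $\oZ_i$ is a continuous, epi-translation invariant valuation that is epi-homogeneous of degree $i$. But there is a genuine gap at the crucial first step, and you name it yourself: you do not actually prove the polynomiality on the piecewise affine subclass, and the route you sketch — ``McMullen's polynomiality machinery applies verbatim at the level of epigraphs in $\R^{n+1}$'' — does not work as stated. McMullen's theorem concerns translation invariant valuations on compact convex bodies; epigraphs of super-coercive functions are unbounded convex sets in $\R^{n+1}$, and there is no off-the-shelf version of the homogeneous decomposition (or of the degree bound by the ambient dimension) for valuations on such unbounded sets. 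Your heuristic that ``the extra vertical direction contributes degree $\le 1$ and is controlled by vertical translation invariance'' is suggestive, but it is exactly the claim that needs a proof, and as you note it is ``the technical heart of the argument.''

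The paper fills this gap by a much more economical device that avoids the $(n+1)$-dimensional picture entirely. For fixed $y \in \R^n$ it considers the functional $\otZ_y(K) = \oZ(\ell_y + \ind_K)$ on $\cK^n$, checks that it is a continuous, translation invariant valuation on compact convex bodies, and then invokes McMullen's theorem for $\cK^n$. The key identity is $\lambda \sq (\ell_y + \ind_K) = \ell_y + \ind_{\lambda K}$: the $\lambda$-dependence lives entirely in the compact body $\lambda K \subset \R^n$, so the degree bound $n$ is automatic and no new McMullen-type result is needed. Vertical translation invariance is then used (as you anticipated) to absorb the $\lambda c$ shift that appears when $w$ is affine rather than linear, so the same conclusion holds for all $w + \ind_P + \alpha$. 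The paper then defines $\oZ_i(u) = \sum_j \alpha_{ij}\oZ(j \sq u)$ globally — observing that the Vandermonde coefficients $\alpha_{ij}$ are independent of $y$ and $K$ — and verifies the identities $\oZ = \sum_i \oZ_i$ and $\oZ_i(\lambda \sq u) = \lambda^i \oZ_i(u)$ on the test class $\{w + \ind_P\}$, extending to all of $\fconvs$ via a null lemma that packages exactly the inclusion-exclusion and approximation steps you were reaching for. If you replace your proposed ``McMullen on epigraphs'' step by this reduction to $\cK^n$ via $\otZ_y$, the rest of your outline goes through and matches the paper's proof.
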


We will see that this theorem is no longer true if we remove the condition of vertical translation invariance (see Section \ref{vert}). We will also see that the set of continuous and  epi-translation invariant valuations is trivial on the larger set of coercive convex functions (see Section \ref{coercive}). Hence the assumption of super-coercivity is in some sense necessary. 

\goodbreak
Milman and Rotem \cite{Milman_Rotem_Mixed_Integrals} discuss the problem to find a functional analog of Minkowski's  mixed volume theorem. In particular, they point out that such a result is not possible on $\fconvx$ for inf-convolution as addition and the volume functional $u\mapsto \int_{\R^n} e^{-u(x)} \d x$.  Instead,  they define a new addition for convex functions to obtain a functional mixed volume theorem. A consequence of Theorem \ref{McM sc} is that continuous and epi-translation invariant valuations are multi\-linear on $\fconvs$ with respect to inf-convolution and epi-multiplication  (see Theorem~\ref{thm poly sc}). Thus, for all such valuations, a functional analog of Minkowski's mixed volume  theorem  is obtained on $\fconvs$ with inf-convolution as addition.

\goodbreak
\subsection{}
The following result gives a characterization of continuous and epi-translation invariant valuations on $\fconvs$, which are epi-homogeneous of degree $n$. For $u\in\fconvs$, we denote by $\dom(u)$ the set of points of $\R^n$ where $u$ is
finite and by $\nabla u$ the gradient of $u$. Note that by standard properties of convex functions, $\nabla u(x)$ is well defined for a.e.\! $x\in\dom(u)$.  Let $C_c(\R^n)$ be the set of continuous functions with compact support on $\R^n$.

\begin{theorem}\label{theorem n-homogeneous} 
A functional $\oZ\colon\fconvs\to\R$ is a continuous and  epi-translation invariant valuation  that is epi-homogeneous of degree $n$, if and only if there exists $\zeta\in C_c(\R^n)$ such that
$$
\oZ(u)=\int_{\dom(u)}\zeta(\nabla u(x))\d x
$$
for every $u\in\fconvs$.
\end{theorem}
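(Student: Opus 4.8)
The ``if'' direction is the easier one, so I would dispose of it first. Given $\zeta \in C_c(\R^n)$, the functional $\oZ(u) = \int_{\dom(u)} \zeta(\nabla u(x))\,\d x$ is clearly vertically translation invariant (adding a constant does not change $\nabla u$ or $\dom(u)$) and translation invariant by the change of variables $x \mapsto x + x_0$. For epi-homogeneity of degree $n$: since $\nabla(\lambda \sq u)(x) = \nabla u(x/\lambda)$ and $\dom(\lambda \sq u) = \lambda\,\dom(u)$, the substitution $y = x/\lambda$ produces the factor $\lambda^n$. The valuation property should follow from the fact that for $u,v \in \fconvs$ with $u \vee v \in \fconvs$ one has $\dom(u\wedge v) = \dom(u)\cup\dom(v)$, $\dom(u\vee v) = \dom(u)\cap\dom(v)$, and that on $\dom(u)\cap\dom(v)$ the gradient of $u\vee v$ agrees a.e.\ with $\nabla u$ or $\nabla v$ according to which function is larger, while on the complementary pieces $u\wedge v$ picks up the matching contribution; splitting $\R^n$ into the regions $\{u<v\}$, $\{u>v\}$, $\{u=v\}$ and using that the last has the gradients coinciding a.e.\ gives additivity. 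Continuity with respect to epi-convergence is the one nontrivial point on this side: one uses that epi-convergence of super-coercive convex functions forces (after passing to the gradient maps, via convergence of subdifferentials) a.e.\ convergence of $\nabla u_k$ together with the equi-super-coercivity needed to confine the relevant mass to a fixed compact set, so dominated convergence applies since $\zeta$ has compact support.

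The ``only if'' direction is the heart of the matter, and here the strategy is to pass to the dual side and reduce to a classification of valuations on convex bodies. By the duality correspondence mentioned in the excerpt ($v \in \fconvf$ iff $v^* \in \fconvs$), a continuous, epi-translation invariant valuation $\oZ$ on $\fconvs$, epi-homogeneous of degree $n$, corresponds to a valuation on $\fconvf$ with dual homogeneity properties. The key reduction is to restrict $\oZ$ to functions of the form $u = h_K + \ell$ where $h_K$ is the support function of a convex body $K$ (these lie in $\fconvf$, and their conjugates are the ``extreme'' elements $\ind_K$-type functions in $\fconvs$); more precisely I would look at how $\oZ$ acts on conjugates of indicator functions $\ind_K$ of convex bodies, i.e.\ on $h_K \in \fconvf$, and show that $K \mapsto \oZ((\ind_K)^*)$ (or the appropriate dual functional) is a continuous, translation invariant valuation on convex bodies that is homogeneous of degree $n$. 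By McMullen's theorem (Section~\ref{prelim_bodies}) together with Hadwiger-type results, such a valuation is a multiple of the volume, i.e.\ $\oZ((\ind_K)^*) = c\,V_n(K)$. More generally, epi-homogeneity of degree $n$ should force $\oZ$ to be concentrated on the ``volume part'' of the epigraph, which after unwinding the duality is exactly the integral $\int_{\dom u}\zeta(\nabla u)\,\d x$ for a suitable density $\zeta$; the identification of $\zeta$ comes from evaluating $\oZ$ on functions whose gradient image is a single point (or a small ball) together with translation invariance pinning down $\zeta$ as a genuine function on $\R^n$ rather than a more general measure, and the compact support of $\zeta$ is forced because $\oZ$ is finite on all of $\fconvs$, in particular on functions (like multiples of $|x|^2$) whose gradients sweep out all of $\R^n$ — if $\zeta$ did not have compact support, such an evaluation would diverge.

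The main obstacle I anticipate is the passage from ``valuation on convex bodies'' back to ``integral against a density $\zeta$'': one must show that the a priori object one extracts — some finitely additive set function or Radon measure $\mu$ on $\R^n$ with $\oZ(u) = \int \dots \d\mu(\nabla u)$ — is actually absolutely continuous with a continuous density, and this is where continuity with respect to epi-convergence has to be leveraged in a quantitative way. A clean route is to build, for each prescribed affine function $\ell$, suitable test functions $u_{\ell,\epsilon}$ that are ``flat'' (equal to $\ell$) on a large ball and steep outside, so that $\oZ(u_{\ell,\epsilon})$ approximates $\zeta(\nabla\ell)$ times the volume of that ball; epi-convergence of these test functions as $\epsilon \to 0$ then yields both the existence and the continuity of $\zeta$. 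A secondary technical point is handling general $u \in \fconvs$ (not just support functions or piecewise affine functions): one approximates an arbitrary super-coercive convex function by epi-convergent sequences of piecewise affine ones — here the density of piecewise affine functions in $\fconvs$, presumably established earlier in the paper or in the cited companion works, is essential — and uses continuity on both sides to extend the formula from the dense subclass to all of $\fconvs$.
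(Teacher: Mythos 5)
Your overall plan for the ``only if'' direction matches the paper's: for each fixed direction parameter, restrict $\oZ$ to functions with prescribed affine slope and indicator domain, recognize the resulting body-valued functional as a continuous, translation invariant, $n$-homogeneous valuation, invoke Hadwiger's theorem (Theorem~\ref{n-homogeneous}) to get a constant $\zeta(y)$ times volume, and finally identify $\oZ$ with the integral functional using the density of piecewise affine functions (Lemma~\ref{null lemma}). In the paper the test functions are exactly $\ell_y+\ind_K\in\fconvs$; no $\epsilon$-regularization or passage to $h_K\in\fconvf$ is needed, and this dissolves your ``secondary technical point''. A small but real confusion in your write-up: you speak of evaluating $\oZ$ on $(\ind_K)^*=h_K$, but $h_K\in\fconvf$, not $\fconvs$, so $\oZ(h_K)$ is not defined; the object you want is $\oZ(\ell_y+\ind_K)$, and it is the auxiliary linear term $\ell_y$ (absent when you take $K\mapsto\oZ(\ind_K)$ alone) that lets the full function $\zeta$ appear as $y$ varies. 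Also note that your worry about extracting a continuous \emph{density} from a measure never arises: Hadwiger's theorem already gives $\otZ_y(K)=\zeta(y)V_n(K)$ with $\zeta$ a pointwise-defined function of $y$, and its continuity is immediate from the continuity of $\oZ$.

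The genuine gap is in your argument for the compact support of $\zeta$. Testing on $u(x)=\tfrac12|x|^2$ gives $\oZ(u)=\int_{\R^n}\zeta(y)\d y$, so finiteness of $\oZ$ only forces $\zeta\in L^1(\R^n)$, not $\zeta\in C_c(\R^n)$, and homogeneity does not improve this (scaling $u$ merely rescales the same integral). The paper instead argues by contradiction using continuity: assuming $\zeta(y_k)\ne 0$ along a sequence $|y_k|\to\infty$ with $y_k/|y_k|\to e_n$, one builds thin cylinders $C_k$ over a unit ball in $y_k^\perp$ with height along $y_k$ calibrated so that $V_n(C_k)=\kappa_{n-1}/\zeta(y_k)$, hence $\oZ(\ell_{y_k}+\ind_{C_k})=\kappa_{n-1}$ for every $k$; but $\ell_{y_k}+\ind_{C_k}$ epi-converges to $\ind_{B_\infty}$ with $B_\infty$ a lower-dimensional ball, on which $\oZ$ vanishes, contradicting continuity. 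This geometric degeneration argument is what forces compact support and has no analogue in your heuristic.

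Finally, for the ``if'' direction you propose a direct domain-decomposition proof of the valuation property and an a.e.-convergence-plus-dominated-convergence proof of continuity. The domain decomposition idea is sound, but the continuity claim is not as elementary as you suggest: epi-convergence in $\fconvs$ does not simply give a.e.\ convergence of $\nabla u_k$ in a form that can be fed directly to dominated convergence, especially since $\dom u_k$ may vary. The paper sidesteps this by appealing to the weak continuity of Hessian measures (Theorem~\ref{theorem 1} and Proposition~\ref{prop 1}, drawing on the companion Hessian valuations paper), which is precisely the machinery built to handle this convergence rigorously. Your route could in principle be made to work, but it is substantially more delicate than the sketch indicates.
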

\goodbreak

We will also obtain a classification of continuous and epi-translation invariant valuations that are epi-homogeneous of degree $0$. These are just constants. As a consequence of these results and  Theorem \ref{McM sc}, we obtain the following complete classification in dimension one.

\begin{corollary} A functional $\,\oZ\colon\fconvsone\to\R$ is a continuous and epi-translation invariant valuation, if and only if there
exist a constant $ \zeta_0\in\R$ and a function $\zeta_1\in C_c(\R)$ such that
$$
\oZ(u)=\zeta_0+\int_{\dom(u)}\zeta_1(u'(x))\d x$$
for every $u\in\fconvsone$.
\end{corollary}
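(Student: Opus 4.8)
The plan is to obtain this classification as a direct consequence of the homogeneous decomposition in Theorem~\ref{McM sc}, combined with the classifications of continuous and epi-translation invariant valuations that are epi-homogeneous of degree $0$ and of degree $n$, specialized to the case $n=1$.

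First I would treat the \emph{sufficiency} direction. Given a constant $\zeta_0\in\R$ and a function $\zeta_1\in C_c(\R)$, the functional $u\mapsto\zeta_0$ is plainly a continuous and epi-translation invariant valuation, and, by Theorem~\ref{theorem n-homogeneous} applied with $n=1$ (so that $\nabla u = u'$), the functional $u\mapsto\int_{\dom(u)}\zeta_1(u'(x))\d x$ is one as well. Since a sum of continuous and epi-translation invariant valuations is again of this type, the functional $\oZ$ given by $\oZ(u)=\zeta_0+\int_{\dom(u)}\zeta_1(u'(x))\d x$ has all the required properties.

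For the \emph{necessity} direction, let $\oZ\colon\fconvsone\to\R$ be a continuous and epi-translation invariant valuation. Since $n=1$ here, Theorem~\ref{McM sc} provides continuous and epi-translation invariant valuations $\oZ_0,\oZ_1\colon\fconvsone\to\R$ such that $\oZ_i$ is epi-homogeneous of degree $i$ and $\oZ=\oZ_0+\oZ_1$. The classification of continuous and epi-translation invariant valuations that are epi-homogeneous of degree $0$ shows that $\oZ_0$ equals a constant $\zeta_0\in\R$, and Theorem~\ref{theorem n-homogeneous} (with $n=1$) shows that there is $\zeta_1\in C_c(\R)$ such that $\oZ_1(u)=\int_{\dom(u)}\zeta_1(u'(x))\d x$ for every $u\in\fconvsone$. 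Adding the two representations yields the asserted formula.

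There is no genuine obstacle here once the quoted results are available; the only point worth keeping in mind is that Theorem~\ref{McM sc} delivers homogeneous components that are themselves continuous and epi-translation invariant valuations, which is exactly what makes them amenable to the degree-$0$ and degree-$n$ classifications. Uniqueness of $\zeta_0$ and $\zeta_1$ is neither claimed nor required for the statement.
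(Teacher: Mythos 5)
Your proposal is correct and follows exactly the route the paper uses: combine Theorem~\ref{McM sc} with the degree-$0$ classification (Theorem~\ref{theorem 0-epi-homogeneous}, which gives that $\oZ_0$ is a constant) and the degree-$n$ classification (Theorem~\ref{theorem n-homogeneous}) in the case $n=1$. The sufficiency direction is handled the same way, via Proposition~\ref{prop 1} respectively Theorem~\ref{theorem n-homogeneous}.
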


\subsection{}\label{dual}
As mentioned before, there exists a bijection between $\fconvf$ and $\fconvs$ given by the standard conjugate, or Legendre transform, of convex functions. For $u\in\fconvx$, we denote by $u^*$ its conjugate, defined by
$$
u^*(y)=\sup\nolimits_{x\in\R^n}\left(\langle x,y\rangle-u(x)\right)$$
for $y\in\R^n$, where $\langle x,y\rangle$ is the inner product of $x, y\in\R^n$. Note that $u\in\fconvs$ if and only if $u^*\in\fconvf$ 
(see, for example, \cite[Theorem 11.8]{RockafellarWets}).

Let $\oZ$ be a continuous valuation on $\fconvf$. It was proved in \cite{ColesantiLudwigMussnig3} that $\oZ^*\colon\fconvs\to\R$, defined by
$$
\oZ^*(u)=\oZ(u^*),
$$
 is a continuous valuation as well. This fact permits to transfer results for valuations on $\fconvf$ to results valid for valuations on 
$\fconvs$ and vice versa.  We call $\oZ^*$ the dual valuation of $\oZ$.

A valuation $\oZ$ on $\fconvf$ is called \emph{homogeneous}  if there exists $\alpha\in\R$ such that
$$
\oZ(\lambda v)=\lambda^\alpha\oZ(v)
$$
for all $v\in\fconvf$ and $\lambda\geq 0$. We say that $\oZ$ is \emph{dually translation invariant} if for every linear function $\ell\colon\R^n\to\R$ 
$$
\oZ(v+\ell)=\oZ(v)
$$
for every $v\in\fconvf$. Let $\ell(y)=\langle y, x_0\rangle$ for $x_0,y\in\R^n$. As $(v+\ell)^*(x) = v^*(x-x_0)$ for $v\in \fconvf$, we see that $\oZ$ is dually translation invariant if and only if $\oZ^*$ is translation invariant.
We define vertical translation invariance for valuations on $\fconvf$ in the same way as on $\fconvs$. We say that $\oZ$ is \emph{dually epi-translation invariant} on $\fconvf$ if it is vertically and dually translation invariant. Note that a functional $\oZ$ is dually epi-translation invariant, if for all $v\in\fconvf$, the value $\oZ(v)$ is not changed by adding an affine function to $v$.

\goodbreak
Let $\oZ$ be a valuation on $\fconvf$. We note the following simple facts. The valuation $\oZ$ is vertically translation invariant if and only if $\oZ^*$ has the same property. The valuation $\oZ^*$ is epi-homogeneous of degree $\alpha$ if and only if $\oZ$ is homogeneous of degree $\alpha$. 

\goodbreak
Hence we obtain the following result as a consequence of Theorem \ref{McM sc}.

\begin{theorem}\label{McM finite} If $\,\oZ: \fconvf\to \R$ is a continuous and dually epi-translation invariant valuation, then there are continuous and  dually epi-translation invariant valuations $\oZ_0, \dots, \oZ_n: \fconvf \to \R$ such that 
$\oZ_i$ is homogeneous of degree  $i$  and 
$\oZ=\oZ_0+\dots +\oZ_n$.
\end{theorem}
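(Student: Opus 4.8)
The plan is to obtain Theorem~\ref{McM finite} from Theorem~\ref{McM sc} by means of the conjugation, exactly as announced in Section~\ref{dual}. Let $\oZ\colon\fconvf\to\R$ be a continuous and dually epi-translation invariant valuation. First I would pass to its dual valuation $\oZ^*\colon\fconvs\to\R$, $\oZ^*(u)=\oZ(u^*)$, which is again a continuous valuation by \cite{ColesantiLudwigMussnig3}. Since $\oZ$ is vertically translation invariant, so is $\oZ^*$, and since $\oZ$ is dually translation invariant, $\oZ^*$ is translation invariant; both equivalences were recorded in Section~\ref{dual}. Hence $\oZ^*$ is a continuous and epi-translation invariant valuation on $\fconvs$, and Theorem~\ref{McM sc} provides continuous and epi-translation invariant valuations $\mathrm{W}_0,\dots,\mathrm{W}_n\colon\fconvs\to\R$ such that $\mathrm{W}_i$ is epi-homogeneous of degree $i$ and $\oZ^*=\mathrm{W}_0+\dots+\mathrm{W}_n$.

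Next I would carry each $\mathrm{W}_i$ back to $\fconvf$, setting $\oZ_i(v):=\mathrm{W}_i(v^*)$ for $v\in\fconvf$. By the duality between continuous valuations on $\fconvf$ and on $\fconvs$ established in \cite{ColesantiLudwigMussnig3} --- the conjugation being an involution on $\fconvx$ that interchanges these two spaces --- each $\oZ_i$ is a continuous valuation on $\fconvf$, and a one-line computation gives $(\oZ_i)^*=\mathrm{W}_i$. The equivalences of Section~\ref{dual}, now applied to the pair $\oZ_i,\,(\oZ_i)^*=\mathrm{W}_i$, then show: since $\mathrm{W}_i$ is vertically translation invariant and translation invariant, $\oZ_i$ is vertically translation invariant and dually translation invariant, hence dually epi-translation invariant; and since $\mathrm{W}_i$ is epi-homogeneous of degree $i$, $\oZ_i$ is homogeneous of degree $i$ (for $\lambda>0$ this comes down to the identity $(\lambda v)^*=\lambda\sq v^*$, and $\lambda=0$ follows by continuity). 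Finally, using $v=v^{**}$ for $v\in\fconvf$,
$$
\oZ(v)=\oZ(v^{**})=\oZ^*(v^*)=\sum_{i=0}^n\mathrm{W}_i(v^*)=\sum_{i=0}^n\oZ_i(v),
$$
which is the desired homogeneous decomposition.

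Once Theorem~\ref{McM sc} is available, the argument is entirely formal, so I do not expect a genuine obstacle. The one thing that needs to be handled with care is the bookkeeping around the conjugation: one must be sure that $\oZ\mapsto\oZ^*$ is an involutive correspondence between continuous valuations on the two spaces which intertwines vertical and dual translation invariance with vertical and ordinary translation invariance, and homogeneity with epi-homogeneity. All of this has been prepared in Section~\ref{dual} together with the cited duality result of \cite{ColesantiLudwigMussnig3}, and the only elementary identity still to be invoked is $(\lambda v)^*=\lambda\sq v^*$ for $v\in\fconvf$ and $\lambda>0$.
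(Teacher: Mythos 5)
Your argument is correct and is exactly what the paper intends: Theorem~\ref{McM finite} is stated as an immediate consequence of Theorem~\ref{McM sc} via the conjugation $\oZ\mapsto\oZ^*$, using the facts recorded in Section~\ref{dual} (conjugation is a continuity- and valuation-preserving involution between $\fconvf$ and $\fconvs$, intertwining vertical/dual translation invariance with vertical/ordinary translation invariance and homogeneity with epi-homogeneity). You have simply written out the bookkeeping that the paper leaves implicit, including the needed identities $(\lambda v)^*=\lambda\sq v^*$ and $v^{**}=v$.
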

\medskip

Alesker \cite{Alesker_cf} introduced the following class of valuations on $\fconvf$. 
Given  real symmetric $n\times n$ matrices $M_1,\dots,M_n$, denote by $\MD(M_1,\dots,M_n)$ their mixed discriminant. Let $i\in\{1,\dots,n\}$ and write $\MD(M[i],M_1,\dots,M_{n-i})$ for  the mixed discriminant in which the matrix $M$ is repeated $i$ times.
Let $A_1,\dots,A_{n-i}$ be continuous, symmetric $n\times n$ matrix-valued functions on $\R^n$ with compact support and $\zeta\in C_c(\R^n)$. Given a function $v\in\fconvf\cap C^2(\R^n)$, set
\begin{equation}\label{val A}
\oZ(v)=\int_{\R^n} \zeta(x) \det(\Hess v(x)[i], A_1(x),\dots,A_{n-i}(x))\d x
\end{equation}
where $\Hess v$ is the Hessian matrix of $v$. Alesker \cite{Alesker_cf} proved that $\oZ$ can be extended to a continuous valuation on $\fconvf$. 
Valuations of type \eqref{val A} are  homogeneous of degree $i$ and dually epi-translation invariant. This implies in particular that the set of valuations with these properties is non-empty.  Clearly,
the dual functional $\oZ^*$ is a continuous, epi-translation invariant, epi-homogeneous valuation on $\fconvs$.

Next, we  state the counterpart of Theorem \ref{theorem n-homogeneous} for valuations on $\fconvf$. Let
$\Theta_0(v,\cdot)$ be the Hessian measure of order $0$ of a function $v\in\fconvf$ (see Section \ref{section hessian measures} for the definition).

\begin{theorem}\label{theorem n-homogeneous finite} A functional $\oZ\colon\fconvf\to\R$ is a continuous and dually epi-translation invariant valuation that is homogeneous of degree $n$, if and only if there exists $\zeta\in C_c(\R^n)$ such that
$$
\oZ(v)=\int_{\R^n\times \R^n}\zeta(x)\d\Theta_0(v,(x,y))$$
for every $v\in\fconvf$. 
\end{theorem}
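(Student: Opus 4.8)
The plan is to derive Theorem~\ref{theorem n-homogeneous finite} from Theorem~\ref{theorem n-homogeneous} via the Legendre transform. The bridge between the two statements is the identity
\[
\int_{\R^n\times\R^n}\zeta(x)\,\d\Theta_0(v,(x,y))=\int_{\dom(v^*)}\zeta(\nabla v^*(y))\,\d y=\oZ_0(v^*)
\]
valid for every $\zeta\in C_c(\R^n)$ and every $v\in\fconvf$, where $\oZ_0(u):=\int_{\dom(u)}\zeta(\nabla u(x))\,\d x$ for $u\in\fconvs$ (note $v^*\in\fconvs$). To prove this identity I would first check it for $v$ in a dense subclass of $\fconvf$, for instance for $v\in C^2(\R^n)$ with positive definite Hessian — such functions arise from an arbitrary $v\in\fconvf$ by mollification and addition of a small quadratic, and they converge locally uniformly, hence epi-converge. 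For such $v$ one has $v^*\in\fconvs$ and $v^*\in C^2(\operatorname{int}\dom(v^*))$, the gradient map $\nabla v$ is a diffeomorphism of $\R^n$ onto $\operatorname{int}\dom(v^*)$ with $\nabla v^*=(\nabla v)^{-1}$, and the change of variables $y=\nabla v(x)$, $\d y=\det\Hess v(x)\,\d x$, turns the middle integral into $\int_{\R^n}\zeta(x)\det\Hess v(x)\,\d x$; on the other hand, by the definition of the Hessian measures recalled in Section~\ref{section hessian measures}, for such $v$ the measure $\Theta_0(v,\cdot)$ is exactly the image of $\det\Hess v(x)\,\d x$ under $x\mapsto(x,\nabla v(x))$, so the left integral equals the same quantity. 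To pass to general $v\in\fconvf$, both sides are continuous: the right-hand side because $v\mapsto v^*$ is a continuous bijection from $\fconvf$ to $\fconvs$ and $\oZ_0$ is continuous on $\fconvs$ by Theorem~\ref{theorem n-homogeneous}, and the left-hand side by the weak continuity of $v\mapsto\Theta_0(v,\cdot)$ from Section~\ref{section hessian measures}.

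Granting this identity, both directions follow quickly. For necessity, let $\oZ$ be a continuous, dually epi-translation invariant valuation on $\fconvf$ that is homogeneous of degree $n$. By the correspondence recalled in Section~\ref{dual}, the dual functional $\oZ^*(u)=\oZ(u^*)$ is a continuous valuation on $\fconvs$ that is translation invariant and vertically translation invariant, hence epi-translation invariant, and epi-homogeneous of degree $n$; so Theorem~\ref{theorem n-homogeneous} provides $\zeta\in C_c(\R^n)$ with $\oZ^*=\oZ_0$. Since $v^{**}=v$ for $v\in\fconvf$, the identity above gives $\oZ(v)=\oZ^*(v^*)=\oZ_0(v^*)=\int_{\R^n\times\R^n}\zeta(x)\,\d\Theta_0(v,(x,y))$ for all $v\in\fconvf$. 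For sufficiency, given $\zeta\in C_c(\R^n)$, Theorem~\ref{theorem n-homogeneous} tells us that $\oZ_0$ is a continuous, epi-translation invariant valuation on $\fconvs$ which is epi-homogeneous of degree $n$; applying the correspondence of Section~\ref{dual} with the roles of $\fconvf$ and $\fconvs$ interchanged (legitimate since the Legendre transform is an involution on these spaces), the functional $v\mapsto\oZ_0(v^*)$ is a continuous, dually epi-translation invariant valuation on $\fconvf$ that is homogeneous of degree $n$, and by the identity above it coincides with $v\mapsto\int_{\R^n\times\R^n}\zeta(x)\,\d\Theta_0(v,(x,y))$.

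The main obstacle is the continuity of $v\mapsto\int_{\R^n\times\R^n}\zeta(x)\,\d\Theta_0(v,(x,y))$ on $\fconvf$, needed to extend the key identity off the smooth class. Weak convergence of the Hessian measures is formulated against test functions with compact support in $\R^n\times\R^n$, whereas $(x,y)\mapsto\zeta(x)$ is supported only on the cylinder $\operatorname{supp}\zeta\times\R^n$, so one must rule out escape of mass in the $y$-direction. This is handled using convexity: if $v_j\to v$ in $\fconvf$, then the convergence is locally uniform, hence the subgradients $\partial v_j(x)$ are uniformly bounded for $x$ in a bounded neighbourhood of $\operatorname{supp}\zeta$; consequently the restrictions of the measures $\Theta_0(v_j,\cdot)$ to $\operatorname{supp}\zeta\times\R^n$ are all concentrated in one fixed compact subset of $\R^n\times\R^n$, on which $(x,y)\mapsto\zeta(x)$ may be replaced by a compactly supported function without changing any of the integrals, so that ordinary weak convergence applies.
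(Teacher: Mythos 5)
Your argument is correct and follows the paper's intended route: reduce Theorem~\ref{theorem n-homogeneous finite} to Theorem~\ref{theorem n-homogeneous} via Legendre duality, using that $\oZ\mapsto\oZ^*$ preserves continuity and the valuation property and converts dually epi-translation invariant, $n$-homogeneous valuations on $\fconvf$ into epi-translation invariant, $n$-epi-homogeneous valuations on $\fconvs$. The only place where you diverge from the paper is the bridging identity $\int_{\R^n\times\R^n}\zeta(x)\,\d\Theta_0(v,(x,y))=\int_{\dom(v^*)}\zeta(\nabla v^*(y))\,\d y$: the paper simply invokes the cited relation $\Theta_i(v,\eta)=\Theta_{n-i}(v^*,\hat\eta)$ (equation~(\ref{Remark on Hessian measures}), Theorem~8.2 of~\cite{ColesantiLudwigMussnig3}) together with the remark after Proposition~\ref{prop 1}, whereas you reprove this special case directly by computation on smooth strictly convex functions and a limiting argument; your continuity argument for the $\Theta_0$-integral (uniformly bounded subgradients near $\operatorname{supp}\zeta$) is essentially the same device that appears in the paper's proof of Theorem~\ref{theorem 1}, so nothing is missing, only re-derived.
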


In the special case of dimension one, we obtain the following complete classification theorem.

\begin{corollary} A functional $\,\oZ\colon\fconvfone\to\R$ is a continuous and dually epi-translation invariant valuation, if and only if there
exist a constant $ \zeta_0\in\R$ and a function $\zeta_1\in C_c(\R)$ such that
$$
\oZ(v)=\zeta_0+\int_{\R\times \R}\zeta_1(x)\d\Theta_0(v,(x,y))$$
for every $v\in\fconvfone$.
\end{corollary}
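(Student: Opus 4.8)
The plan is to deduce the statement from the one-dimensional instances of Theorem~\ref{McM finite} and Theorem~\ref{theorem n-homogeneous finite}, together with the classification of the degree~$0$ valuations announced after Theorem~\ref{theorem n-homogeneous finite}. Since $n=1$, the only admissible degrees of homogeneity are $0$ and $1$, so the homogeneous decomposition provided by Theorem~\ref{McM finite} has just two summands, and each of them is pinned down by an already available classification result. (One could also argue purely by duality: the conjugation map $\oZ\mapsto\oZ^*$ is an involutive bijection between continuous, dually epi-translation invariant valuations on $\fconvfone$ and continuous, epi-translation invariant valuations on $\fconvsone$, it preserves vertical translation invariance and the degree of (epi-)homogeneity, and it matches up Theorem~\ref{theorem n-homogeneous} with Theorem~\ref{theorem n-homogeneous finite}; hence the present corollary is equivalent to the preceding one.) We carry out the direct argument.

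\emph{Necessity.} Let $\oZ\colon\fconvfone\to\R$ be a continuous and dually epi-translation invariant valuation. Applying Theorem~\ref{McM finite} with $n=1$, we obtain continuous and dually epi-translation invariant valuations $\oZ_0,\oZ_1\colon\fconvfone\to\R$, homogeneous of degrees $0$ and $1$ respectively, with $\oZ=\oZ_0+\oZ_1$. A continuous, dually epi-translation invariant valuation that is homogeneous of degree $0$ is constant; this is the dual form, via $\oZ\mapsto\oZ^*$, of the degree~$0$ classification on $\fconvs$ (concretely, degree $0$ homogeneity gives $\oZ_0(\lambda v)=\oZ_0(v)$ for $\lambda>0$, while $\lambda v$ epi-converges to $0$ as $\lambda\to 0^+$, so continuity forces $\oZ_0(v)=\oZ_0(0)=:\zeta_0$ for every $v$). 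Applying Theorem~\ref{theorem n-homogeneous finite} with $n=1$ to $\oZ_1$ yields $\zeta_1\in C_c(\R)$ with
$$
\oZ_1(v)=\int_{\R\times\R}\zeta_1(x)\d\Theta_0(v,(x,y))
$$
for every $v\in\fconvfone$. Adding the two contributions gives the asserted formula.

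\emph{Sufficiency.} Conversely, fix $\zeta_0\in\R$ and $\zeta_1\in C_c(\R)$. The constant functional $v\mapsto\zeta_0$ trivially satisfies the valuation identity \eqref{val_def}, is continuous, and is unchanged under adding affine functions, hence it is a continuous, dually epi-translation invariant valuation. By Theorem~\ref{theorem n-homogeneous finite} with $n=1$, the functional $v\mapsto\int_{\R\times\R}\zeta_1(x)\d\Theta_0(v,(x,y))$ is also a continuous, dually epi-translation invariant valuation. Since the valuation identity \eqref{val_def}, continuity, vertical translation invariance, and dual translation invariance all pass to sums, the functional $\oZ$ defined by the displayed formula has all the required properties. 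The only substantive input is in the necessity direction, and there the real work is carried by the earlier results: once the two-term homogeneous decomposition of Theorem~\ref{McM finite} is available, identifying the degree~$0$ piece as a constant and invoking Theorem~\ref{theorem n-homogeneous finite} for the degree~$1$ piece are immediate, so no new obstacle arises at this stage.
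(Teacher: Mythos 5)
Your proof is correct and follows essentially the same route the paper intends: apply the homogeneous decomposition of Theorem~\ref{McM finite} with $n=1$, identify the degree~$0$ summand as a constant (which the paper packages as Theorem~\ref{theorem 0-homogeneous}, though your direct continuity argument is equally valid), and invoke Theorem~\ref{theorem n-homogeneous finite} for the degree~$1$ summand. The sufficiency direction and the alternative duality remark are also sound, so nothing needs to be added.
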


The plan for this paper is as follows. In Section \ref{prelim}, we collect results on convex bodies and functions needed for the proofs of the main results.  In Section \ref{inclu_exclu}, an inclusion-exclusion principle is established for valuations on convex functions and  in Section \ref{section hessian measures}, the existence and properties of the valuations in Theorem \ref{theorem n-homogeneous} and Theorem \ref{theorem n-homogeneous finite} are deduced by using results on Hessian valuations.   Theorem \ref{McM sc} is proved  in Section \ref{proof1}. As a consequence the polynomiality of epi-translation invariant valuations is obtained and  a connection to the valuations introduced by Alesker is established in Section \ref{section polynomiality}.  The proof of Theorem \ref{theorem n-homogeneous} is given in Section \ref{class}.  In the final sections, the necessity of the assumptions in Theorem \ref{McM sc} is demonstrated.

\goodbreak
\section{Preliminaries}\label{prelim}

We work in $n$-dimensional Euclidean space $\R^n$, with $n\ge 1$, endowed with the Euclidean norm $\vert \cdot\vert $ and the usual scalar product
$\langle \cdot,\cdot\rangle$. 

\subsection{}\label{prelim_bodies}
A {\em convex body} is a nonempty, compact and convex subset of $\R^n$. The family of all convex bodies is denoted by $\cK^n$. A
{\em polytope} is the convex hull of finitely many points in $\R^n$. The set of polytopes, denoted by $\cP^n$, is  contained in $\cK^n$. We equip both $\cK^n$ and $\cP^n$ with the topology coming from the Hausdorff metric.

A functional $\oZ: \cK^n \to \R$ is a valuation if  
\begin{equation*}
\oZ(K\cup L)+\oZ(K\cap L)=\oZ(K)+\oZ(L)
\end{equation*}
for every $K,L\in \cK^n$ with  $K\cup L \in \cK^n$. We say that $\oZ$ is translation invariant if
$\oZ(\tau K) = \oZ(K)$
for all translations $\tau: \R^n\to \R^n$ and $K\in\cK^n$. It is homogeneous of degree $\alpha\in \R$, if $\oZ(\lambda\,K)= \lambda^\alpha\oZ(K)$ for all $K\in\cK^n$ and $\lambda\geq 0$.

\goodbreak
The following result by McMullen \cite{McMullen77} establishes a homogeneous decomposition for continuous and  translation invariant  valuations on $\cK^n$. 

\begin{theorem}[McMullen]\label{McM} If $\,\oZ: \cK^n\to \R$ is a continuous and translation invariant valuation, then there are continuous and translation invariant valuations $\oZ_0, \dots, \oZ_n: \cK^n \to \R$ such that 
$\oZ_i$ is homogeneous of degree  $i$  and 
$\oZ=\oZ_0+\dots +\oZ_n$.
\end{theorem}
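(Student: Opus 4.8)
The plan is to reduce the theorem to the single statement that, for every $K\in\cK^n$, the function $\lambda\mapsto\oZ(\lambda K)$ on $[0,\infty)$ is a polynomial of degree at most $n$. Granting this, write $\oZ(\lambda K)=\sum_{i=0}^{n}\lambda^i\oZ_i(K)$ and recover the coefficients by Lagrange interpolation: fixing distinct $\lambda_0,\dots,\lambda_n>0$ there are universal constants $a_{ij}$ with $\oZ_i(K)=\sum_{j=0}^{n}a_{ij}\,\oZ(\lambda_j K)$. Since $\lambda_j(K\cup L)=\lambda_j K\cup\lambda_j L$, $\lambda_j(K\cap L)=\lambda_j K\cap\lambda_j L$, and $\lambda_j(\tau K)$ is a translate of $\lambda_j K$ for every translation $\tau$, each $K\mapsto\oZ(\lambda_j K)$ is a continuous and translation invariant valuation, hence so is each $\oZ_i$. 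Setting $\lambda=1$ gives $\oZ=\oZ_0+\dots+\oZ_n$; and comparing powers of $\lambda$ in $\oZ(\mu\lambda K)=\sum_{i}\mu^i\lambda^i\oZ_i(K)$ and $\oZ(\lambda(\mu K))=\sum_{i}\lambda^i\oZ_i(\mu K)$ yields $\oZ_i(\mu K)=\mu^i\oZ_i(K)$ for $\mu>0$, so $\oZ_i$ is homogeneous of degree $i$.

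To establish the polynomial expansion I would first pass to polytopes: polytopes are dense in $\cK^n$, and if $P_k\to K$ then $\lambda P_k\to\lambda K$, so $\oZ(\lambda P_k)\to\oZ(\lambda K)$ for each $\lambda\ge0$; since a pointwise limit on $[0,\infty)$ of polynomials of degree at most $n$ is again such a polynomial (compare $n+1$ values and pass to the limit in the Lagrange form), it suffices to treat $K=P$ a polytope. Triangulating $P$ and using that $\oZ$ extends to finite unions of polytopes so that inclusion--exclusion holds (Groemer's extension theorem), $\oZ(\lambda P)$ is an integer combination of values $\oZ(\lambda\Delta)$ over simplices $\Delta$ of the triangulation and their common faces, so it is enough to treat $P=\Delta$ a simplex. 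For each $r\in\N$ the dilate $r\Delta$ admits a triangulation whose cells are translates of finitely many fixed simplices (a unimodular triangulation of the dilated simplex, classical from the theory behind Ehrhart's theorem). By translation invariance all cells of a given type contribute the same value of $\oZ$, so inclusion--exclusion gives $\oZ(r\Delta)=\sum_{T}c_T N_T(r)\,\oZ(T)$, where $T$ runs over a finite set of types (each cell being a translate of a fixed simplex of its type), $c_T\in\mathbb Z$ is the resulting inclusion--exclusion coefficient, and $N_T(r)$ counts the cells of type $T$. Each $N_T(r)$ is an Ehrhart-type counting function, hence a polynomial in $r$ of degree at most $n$, so $\oZ(r\Delta)$ is a polynomial in $r$ of degree at most $n$ for $r\in\N$. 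Applying this to the rescaled simplices $\tfrac1s\Delta$ shows that $\oZ(\lambda\Delta)$ coincides with a polynomial of degree at most $n$ on each $\tfrac1s\N$; as these polynomials agree on infinitely many common points they are all equal, so one polynomial of degree at most $n$ agrees with $\oZ(\lambda\Delta)$ on all positive rationals, and hence, by continuity, on all of $[0,\infty)$.

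The main obstacle is the simplex step: producing the dissection of $r\Delta$ into translates of boundedly many fixed simplices and checking that the multiplicities $N_T(r)$ are polynomials in $r$ of degree at most $n$. This is precisely McMullen's contribution, and it is what forces the bound $n$ on the degree, which is controlled by the dimension $n$ of the ambient space in which the counting takes place. The remaining ingredients --- the inclusion--exclusion extension of $\oZ$, the Lagrange-interpolation recovery of the components $\oZ_i$, and the density and continuity passages --- are routine once the simplex case is in hand.
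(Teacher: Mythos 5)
The paper gives no proof of Theorem~\ref{McM}: it is quoted as a classical result with a citation to McMullen \cite{McMullen77}, so there is no internal proof to compare against. Judged on its own terms, your outline correctly reproduces the skeleton of the standard argument (as in \cite[Section~6.3]{Schneider:CB2}, following McMullen): reduce the homogeneous decomposition to the polynomiality of $\lambda\mapsto\oZ(\lambda K)$, recover the graded pieces by Lagrange interpolation, pass to polytopes by density and continuity, reduce to simplices via triangulation and Groemer's inclusion--exclusion, and extend from integer to rational to real $\lambda$. All of these surrounding reductions are sound as you wrote them.

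The genuine gap is exactly the one you yourself flag as ``the main obstacle,'' and it is not a routine citation. You assert that for each simplex $\Delta$ and each integer $r$ the dilate $r\Delta$ admits a dissection into translates of finitely many fixed simplices, that the inclusion--exclusion can be organized as a finite sum $\sum_T c_T N_T(r)\,\oZ(T)$ with one coefficient $c_T$ per translation type, and that each $N_T(r)$ is a polynomial in $r$ of degree at most $n$. The appeal to ``a unimodular triangulation\dots classical from the theory behind Ehrhart's theorem'' does not establish this: a general simplex carries no canonical lattice (one must first transport the standard lattice through an affine map and verify that translation types and multiplicities survive), Ehrhart's theorem counts lattice points rather than cells of a dissection, and the uniformity of the inclusion--exclusion coefficients $c_T$ over all faces of a given type is a nontrivial claim about the local combinatorics of the dissection, not something one can read off. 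This dissection lemma is the substantive content of McMullen's theorem; without proving it or citing a precise statement of it, your proposal is an accurate roadmap rather than a complete proof.
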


We recall two classification results for valuations on convex bodies. First, we note that it is easy to see that every 
 continuous  and translation invariant valuation  that is homogeneous of degree $0$ is constant. 
 The classification of continuous and translation invariant valuations that are  $n$-homogeneous  is due to Hadwiger  \cite{Hadwiger:V}. Let $V_n$ denote $n$-dimensional volume (that is, $n$-dimensional Lebesgue measure).

\begin{theorem}[Hadwiger]\label{n-homogeneous} 
A functional $\oZ\colon\cK^n\to\R$ is a continuous and translation invariant valuation  that is homogeneous of degree $n$, if and only if there exists $\alpha\in\R$ such that $\oZ=\alpha \, V_n$.
\end{theorem}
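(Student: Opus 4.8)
The ``if'' direction is immediate: $V_n$ is additive and therefore a valuation on $\cK^n$, it is continuous with respect to the Hausdorff metric and translation invariant, and it is homogeneous of degree $n$; hence so is $\alpha\, V_n$ for every $\alpha\in\R$. For the converse, let $\oZ\colon\cK^n\to\R$ be a continuous, translation invariant valuation that is homogeneous of degree $n$. The plan is to show $\oZ=\alpha\, V_n$ with $\alpha=\oZ([0,1]^n)$, in three steps.

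\emph{Step 1: $\oZ$ is simple}, i.e.\ $\oZ(K)=0$ whenever $\dim K\le n-1$. Fix a $k$-dimensional affine subspace $E\subseteq\R^n$ with $0\le k\le n-1$; as translations do not change $\oZ$, we may assume $E$ is linear, $E\cong\R^k$. The restriction of $\oZ$ to the convex bodies contained in $E$ is again a continuous, translation invariant valuation that is homogeneous of degree $n$. If $k=0$, then $\oZ(\{0\})=\lambda^n\oZ(\{0\})$ for all $\lambda>0$, so $\oZ(\{0\})=0$. If $k\ge 1$, Theorem~\ref{McM} applied inside $E\cong\R^k$ decomposes this restriction into parts homogeneous of degrees $0,\dots,k$, so $\lambda\mapsto\oZ(\lambda K)$ is a polynomial of degree at most $k$ for each such $K$; since it also equals $\lambda^n\oZ(K)$ and $k<n$, it follows that $\oZ(K)=0$. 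As $E$ was arbitrary, $\oZ$ vanishes on all lower-dimensional convex bodies.

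\emph{Step 2: $\oZ$ on coordinate boxes.} For $t_1,\dots,t_n\ge 0$ put $g(t_1,\dots,t_n)=\oZ([0,t_1]\times\cdots\times[0,t_n])$. Cutting such a box by the hyperplane $\{x_1=s\}$ with $0\le s\le t_1$ splits it into two boxes that are translates of the boxes with first side length $s$ and $t_1-s$, and their intersection is a lower-dimensional box, which has $\oZ$-value $0$ by Step~1; so the valuation property gives $g(t_1,t_2,\dots,t_n)=g(s,t_2,\dots,t_n)+g(t_1-s,t_2,\dots,t_n)$. Hence $s\mapsto g(s,t_2,\dots,t_n)$ is additive and continuous on $[0,\infty)$, thus linear, and (using again Step~1, which gives $g=0$ as soon as one $t_i=0$) iterating over the coordinates yields $g(t_1,\dots,t_n)=\alpha\,t_1\cdots t_n$ with $\alpha=\oZ([0,1]^n)$. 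So $\oZ$ agrees with $\alpha\, V_n$ on every translate of a coordinate box; replacing $\oZ$ by $\oZ-\alpha\, V_n$, we are left to prove that a \emph{simple}, continuous, translation invariant, $n$-homogeneous valuation vanishing on all coordinate boxes is identically $0$.

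\emph{Step 3: from boxes to all of $\cK^n$ --- the main obstacle.} Call the remaining valuation again $\oZ$. Being simple, it is additive under dissections of a polytope into polytopes meeting pairwise in lower-dimensional sets (inclusion--exclusion for simple valuations); hence $\oZ$ is determined on $\cP^n$, and by continuity on all of $\cK^n$, by its values on simplices. To see that these vanish I would combine the valuation property on suitable dissections with the fact that the top-degree part of a translation invariant valuation is even, $\oZ(-K)=\oZ(K)$: starting from cubes one reaches all parallelepipeds by shears, each realized by cutting off a simplex and translating it so that the discarded pieces meet only in lower-dimensional sets; this forces $\oZ$ to vanish on all parallelepipeds, then on all simplices, hence on all polytopes, and finally on $\cK^n$ by continuity. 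This last step is the technical heart of the statement and coincides with the content of McMullen's and Hadwiger's arguments; for it I would simply refer to \cite{McMullen77,Hadwiger:V} (see also \cite{Schneider:CB2}). Steps~1 and~2, on the other hand, are short, Step~1 being a clean reduction via Theorem~\ref{McM} in lower dimensions.
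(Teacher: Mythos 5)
The paper states this theorem without proof, citing Hadwiger (see also \cite{Schneider:CB2,Klain:Rota}), so there is no internal argument to compare against. Your Steps~1 and~2 are correct and give a clean reduction: using Theorem~\ref{McM} inside a proper subspace to conclude simplicity is a tidy way to handle that part, and the Cauchy-functional-equation argument on coordinate boxes is standard. (One could also get Step~2 directly from simplicity plus $n$-homogeneity without iterating the Cauchy argument, but your version is fine.)

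Step~3 is indeed the heart of the theorem, and you are right to flag it as such and to cite the literature rather than claim a self-contained argument. However, the sketch as written would not, on its own, produce a proof, for two reasons. First, in $\R^n$ with $n\ge 3$, a shear of a box is realized by cutting off and translating a \emph{prism} over a lower-dimensional simplex, not a simplex; the dissection combinatorics are more involved than the two-dimensional picture suggests. Second, and more seriously, the passage from parallelepipeds to general simplices and polytopes cannot be achieved by finite translation-dissection alone --- this is precisely what the Dehn invariant obstructs in dimension $\ge 3$. The actual argument (in Hadwiger and in Schneider's Theorem~6.4.7, or Klain's short proof) uses continuity and translation invariance in a more delicate way, e.g.\ via a limiting process with long cylinders, or via the polytope algebra, together with the evenness fact you mention (itself a nontrivial consequence of McMullen's theory). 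As a commented citation your Step~3 is acceptable; as an outline it overstates how far a naive cut-and-paste argument gets you.
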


\subsection{}\label{prelim_functions} 
Given a subset $A\subset\R^n$, let $\ind_A\colon\R^n\to\R\cup\{+\infty\}$ denote the (convex) indicatrix function of $A$,
$$
\ind_A(x)=
\left\{
\begin{array}{lll}
\phantom{\,+}0\quad&\mbox{if $x\in A$,}\\
+\infty\quad&\mbox{if $x\notin A$.}
\end{array}
\right.
$$
Note that for a convex body $K$, we have $\ind_K\in\fconvs$.

We equip $\fconvx$ with the topology associated to epi-convergence.
Here a sequence $u_k\in\fconvx$ is \emph{epi-convergent}  to $u\in\fconvx$ if for all $x\in\R^n$ the following conditions hold:
\begin{itemize}
	\item[(i)] For every sequence $x_k$ that converges to $x$, we have $u(x) \leq \liminf_{k\to \infty} u_k(x_k)$.
			
	\item[(ii)] There exists a sequence $x_k$ that converges to $x$ such that $u(x) = \lim_{k\to\infty} u_k(x_k)$.
\end{itemize}
The following result can be found in \cite[Theorem 11.34]{RockafellarWets}.

\begin{proposition}\label{convergence conjugates} A sequence $u_k$ of functions from $\fconvx$ epi-converges to $u\in\fconvx$
if and only if the sequence $u_k^*$ epi-converges to $u^*$. 
\end{proposition}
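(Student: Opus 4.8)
The statement is the classical fact (going back to Wijsman and Attouch) that Legendre duality is a homeomorphism for epi-convergence; I would prove it through the Moreau--Yosida regularization, which simultaneously linearizes epi-convergence and conjugates cleanly. For $v\in\fconvx$ and $\lambda>0$ put
\[
e_\lambda v(x)=\inf_{z\in\R^n}\Big(v(z)+\tfrac{1}{2\lambda}\abs{x-z}^2\Big)=\big(v\infconv\tfrac1{2\lambda}\abs{\cdot}^2\big)(x).
\]
Since every $v\in\fconvx$ is minorized by an affine function, $e_\lambda v$ is a finite, convex function of class $C^1$, and $e_\lambda v(x)\uparrow v(x)$ as $\lambda\downarrow 0$. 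The first ingredient is the \emph{Moreau identity}
\[
e_\lambda v(x)+e_{1/\lambda}(v^*)\big(\tfrac{x}{\lambda}\big)=\tfrac1{2\lambda}\abs{x}^2 ,\qquad v\in\fconvx,\ \lambda>0,\ x\in\R^n .
\]
This is a one-line computation: expanding $\tfrac1{2\lambda}\abs{x-z}^2$ inside the infimum gives $e_\lambda v(x)=\tfrac1{2\lambda}\abs{x}^2-\big(v+\tfrac1{2\lambda}\abs{\cdot}^2\big)^*\!\big(\tfrac{x}{\lambda}\big)$, and since $\tfrac1{2\lambda}\abs{\cdot}^2$ is everywhere finite and continuous, the conjugate of the sum is the infimal convolution of the conjugates, $\big(v+\tfrac1{2\lambda}\abs{\cdot}^2\big)^*=v^*\infconv\tfrac{\lambda}{2}\abs{\cdot}^2=e_{1/\lambda}(v^*)$.

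The second ingredient is the characterization of epi-convergence through these regularizations: for $v_k,v\in\fconvx$, the sequence $v_k$ epi-converges to $v$ if and only if $e_\lambda v_k\to e_\lambda v$ pointwise on $\R^n$ for every $\lambda>0$ (and then, the $e_\lambda v_k$ being finite and convex, the convergence is automatically uniform on compact sets). Granting this, the proposition is immediate: by the Moreau identity, with $\mu=1/\lambda$ and $y=x/\lambda$, the assertion ``$e_\lambda v_k(x)\to e_\lambda v(x)$ for all $\lambda>0$ and $x\in\R^n$'' is exactly the assertion ``$e_\mu(v_k^*)(y)\to e_\mu(v^*)(y)$ for all $\mu>0$ and $y\in\R^n$''; hence $v_k$ epi-converges to $v$ if and only if $v_k^*$ epi-converges to $v^*$. (The ``if'' half of the proposition also follows from the ``only if'' half applied to $v_k^*,v^*$ together with $v^{**}=v$.)

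The real work is the second ingredient, and this is where the main obstacle lies; in fact it is itself a nontrivial analytic theorem. The ``only if'' direction is relatively soft, using the calculus of epi-convergence: $v_k\to v$ epi implies $v_k(\cdot)+\tfrac1{2\lambda}\abs{x-\cdot}^2\to v(\cdot)+\tfrac1{2\lambda}\abs{x-\cdot}^2$ epi, this family is eventually level-bounded because of the quadratic term, and convergence of infimal values under epi-convergence then yields $e_\lambda v_k(x)\to e_\lambda v(x)$. The ``if'' direction is more delicate: condition (i) of epi-convergence for $v_k\to v$ follows from $v_k(x_k)\ge e_\lambda v_k(x_k)\to e_\lambda v(x)$ by letting $\lambda\downarrow 0$ and using $e_\lambda v(x)\uparrow v(x)$; but the recovery condition (ii) at a point $x\in\dom v$ has to be produced by a diagonalization over $\lambda\downarrow0$, after observing that local uniform convergence of the convex functions $e_\lambda v_k$ forces convergence of the minimizers $z_k(x,\lambda)=x-\lambda\nabla e_\lambda v_k(x)$ in the definition of $e_\lambda v_k(x)$ (gradients of pointwise-convergent convex functions converge at points of differentiability). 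Assembling this diagonalization cleanly, and treating points on the boundary of $\dom v$, is the technical heart of the argument — and explains why one cites \cite[Theorem 11.34]{RockafellarWets} rather than reproving it.
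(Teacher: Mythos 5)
The paper offers no proof here, instead citing \cite[Theorem 11.34]{RockafellarWets}. Your argument correctly reconstructs the standard proof underlying that citation: the Moreau identity $e_\lambda v(x)+e_{1/\lambda}(v^*)(x/\lambda)=\tfrac{1}{2\lambda}\abs{x}^2$ is derived exactly, and combined with the equivalence between epi-convergence of functions in $\fconvx$ and pointwise convergence of their Moreau envelopes (Theorem 7.37 in the same reference) it yields the duality via the change of variables $(\lambda,x)\mapsto(1/\lambda,x/\lambda)$. Your sketch of the envelope characterization is also sound, and you are right to flag it as the technical heart: the liminf inequality from $v_k(x_k)\ge e_\lambda v_k(x_k)$ together with locally uniform convergence of the finite convex functions $e_\lambda v_k$, and the recovery sequence built from the proximal points $z_k=x-\lambda\nabla e_\lambda v_k(x)$ via a diagonal argument in $\lambda$, are the correct moves.
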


If $u\in\fconv$, then for $t\in \R$ the sublevel sets $\{u\leq t\}=\{x\in\R^n\colon u(x)\leq t\}$ are either empty or in $\cK^n$. The next result, which follows from \cite[Lemma 5]{ColesantiLudwigMussnig} and \cite[Theorem 3.1]{BeerRockafellarWets}, shows that on $\fconv$ epi-convergence is equivalent to Hausdorff convergence of sublevel sets, where we say that $\{u_k\leq t\}\to \emptyset$ as $k\to\infty$ if there exists $k_0\in\N$ such that $\{u_k\leq t\}=\emptyset$ for $k\geq k_0$.

\begin{lemma}\label{hd conv lvl sets}
If $u_k,u\in\fconv$, then $u_k$ epi-converges to $u$ if and only if $\{u_k \leq t\} \to \{u\leq t\}$ for every $t\in\R$ with $t\neq \min_{x\in\R^n} u(x)$.
\end{lemma}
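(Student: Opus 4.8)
The plan is to prove the two implications separately, in each case routing through Painlev\'e--Kuratowski convergence of the sublevel sets and using coercivity of $u$ to pass to, respectively from, Hausdorff convergence. Write $m=\min_{x\in\R^n}u(x)$, attained since $u\in\fconv$ is coercive and lower semicontinuous. Recall that the outer limit of a sequence of closed sets $A_k$ is the set of limits of convergent subsequences $x_{k_j}\in A_{k_j}$, the inner limit the set of limits of sequences $x_k\in A_k$, and that $A_k$ converges Painlev\'e--Kuratowski precisely when the two limits agree; for convex bodies lying in a common ball this coincides with Hausdorff convergence. So the lemma is essentially an assembly of \cite[Theorem 3.1]{BeerRockafellarWets}, which characterizes epi-convergence of convex functions through Painlev\'e--Kuratowski convergence of the level sets together with convergence of the infima, and \cite[Lemma 5]{ColesantiLudwigMussnig}; the work lies in the coercivity-dependent steps.

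For the direction from epi-convergence to level-set convergence, assume $u_k$ epi-converges to $u$. Condition (i) forces the outer limit of $\{u_k\le t\}$ into $\{u\le t\}$ for every $t\in\R$ (extend a subsequence realizing a point of the outer limit to a full sequence converging to it and apply (i)), and condition (ii) together with convexity forces $\{u\le t\}$ into the inner limit for every $t>m$ (recovery sequences handle points with $u(x)<t$, and a point with $u(x)=t$ is recovered as the limit of $(1-\lambda)x+\lambda z$ as $\lambda\downarrow0$, where $u(z)<t$, using that the inner limit is closed). The key coercivity step is equi-boundedness: for $t>m$ the bodies $\{u_k\le t\}$ are eventually contained in one fixed ball, since otherwise, taking $p_{k_j}\in\{u_{k_j}\le t\}$ with $\abs{p_{k_j}}\to\infty$ and a recovery sequence at a point $z$ with $u(z)<t$, convexity of the level sets would push an entire half-line into the outer limit, hence into $\{u\le t\}$, contradicting coercivity of $u$. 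Equi-boundedness, together with a recovery sequence at a minimizer of $u$, then gives $\min u_k\to m$, so $\{u_k\le t\}$ is eventually empty for $t<m$; and for uniformly bounded convex bodies Painlev\'e--Kuratowski convergence is Hausdorff convergence.

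For the converse, assume $\{u_k\le t\}\to\{u\le t\}$ for all $t\ne m$. Testing $t<m$ and $t>m$ shows $\{u_k\le t\}$ is eventually empty, respectively eventually nonempty, so $\min u_k\to m$ and the $u_k$ are eventually bounded below by a fixed constant. Condition (i) then follows by contradiction: if $\liminf_k u_k(x_k)<u(x)$ along a subsequence, choose $t\ne m$ with $\liminf_k u_k(x_k)<t<u(x)$; then $x_{k_j}\in\{u_{k_j}\le t\}$ along some subsequence, and Hausdorff convergence (or eventual emptiness if $t<m$) forces $x\in\{u\le t\}$, which is absurd. Condition (ii) is immediate from (i) when $u(x)=+\infty$; when $u(x)<+\infty$, take $t_j\downarrow u(x)$ with $t_j\ne m$, pick $x_k^{(j)}\in\{u_k\le t_j\}$ with $x_k^{(j)}\to x$ as $k\to\infty$ (possible since $x\in\{u\le t_j\}$ and the level sets converge in the Hausdorff metric), and diagonalize to get $x_k\to x$ with $\limsup_k u_k(x_k)\le u(x)$; the matching lower bound comes from (i), or from $u_k(x_k)\ge\min u_k$ together with $\min u_k\to m$ in the case $u(x)=m$.

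I expect the crux to be the coercivity-driven equi-boundedness of the level sets in the first implication --- the step that upgrades Painlev\'e--Kuratowski convergence to Hausdorff convergence, and the feature that genuinely fails on the larger space $\fconvx$. The second delicate point is the exceptional level $t=m$, where level-set convergence can break down (for $u_k(x)=\abs{x}+1/k$ on $\R$ one has $\{u_k\le0\}=\emptyset$ while $\{u\le0\}=\{0\}$), which is why that level must be excluded; the remaining bookkeeping --- the diagonal argument near $m$ and the treatment of empty level sets within the stated convention --- is routine.
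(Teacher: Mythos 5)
Your proof is correct. The paper itself gives no proof of this lemma but only cites \cite[Lemma 5]{ColesantiLudwigMussnig} together with \cite[Theorem 3.1]{BeerRockafellarWets}; your argument is a correct self-contained reconstruction of what those citations supply, and you rightly isolate the two coercivity-dependent steps --- the equi-boundedness of the sublevel sets, which upgrades Painlev\'e--Kuratowski convergence to Hausdorff convergence, and the necessary exclusion of the level $t=\min_{x\in\R^n}u(x)$ --- as the genuine content beyond the cited results.
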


\subsection{}
A function $v\in\fconvf$ is called {\em piecewise affine} if there exist finitely many affine functions $w_1,\dots,w_m: \R^n \to \R$ such that
\begin{equation}\label{pa function1}
v=\bigvee_{i=1}^m w_i.
\end{equation}
The set of piecewise affine functions will be denoted by $\fconvfpa$. It is a subset of $\fconvf$.    

\goodbreak
We recall that epi-convergence in $\fconvf$ is equivalent to uniform convergence on compact sets (see, for example, \cite[Theorem 7.17]{RockafellarWets}). Hence the following proposition
follows from standard approximation results for convex functions. 

\begin{proposition}
\label{approximation lemma finite}
For every $v\in\fconvf$, there exists a sequence in $\fconvfpa$ which epi-converges to $v$.
\end{proposition}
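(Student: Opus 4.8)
The plan is to exhibit, for a given $v\in\fconvf$, an explicit sequence of piecewise affine functions converging to $v$ uniformly on compact sets, which by the remark preceding the statement is the same as epi-convergence in $\fconvf$. The natural candidate is built from the subgradient inequality: for each $x\in\R^n$ pick a subgradient $p_x\in\partial v(x)$ (which exists since $v$ is finite, hence continuous, on all of $\R^n$), and set $w_x(y)=v(x)+\langle p_x, y-x\rangle$, an affine minorant of $v$ with $w_x(x)=v(x)$. Then $v=\sup_{x\in\R^n} w_x$. To get a \emph{finite} supremum that still approximates $v$ well, I would fix $k\in\N$, work on the ball $B_k$ of radius $k$, choose a finite $\tfrac1k$-net $\{x_1,\dots,x_m\}$ of $B_k$ (with $m=m(k)$), and define
$$
v_k=\bigvee_{j=1}^m w_{x_j}.
$$
Each $v_k$ lies in $\fconvfpa$ by definition~\eqref{pa function1}, and $v_k\le v$ everywhere.

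The key step is the uniform lower bound $v_k\ge v-\varepsilon_k$ on a fixed compact set, with $\varepsilon_k\to0$. Here I would use a local Lipschitz estimate: $v$ is convex and finite, hence Lipschitz with some constant $L=L(k)$ on $B_{k+1}$; moreover the subgradients $p_{x_j}$ for $x_j\in B_k$ are bounded by the same $L$. For any $y$ in a fixed ball $B_R$ and any $k$ large, take the net point $x_j$ with $|y-x_j|\le\tfrac1k$; then
$$
v_k(y)\ge w_{x_j}(y)=v(x_j)+\langle p_{x_j},y-x_j\rangle\ge v(y)-L|y-x_j|-L|y-x_j|\ge v(y)-\tfrac{2L}{k},
$$
using the Lipschitz bound on $v$ and the Cauchy--Schwarz bound on the linear term. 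Combined with $v_k\le v$, this gives $\|v_k-v\|_{L^\infty(B_R)}\le 2L(k)/k$. Since for fixed $R$ we have $L(k)\le L(R+1)$ once $k\ge R$, the right-hand side tends to $0$, so $v_k\to v$ uniformly on $B_R$; as $R$ was arbitrary, $v_k$ epi-converges to $v$.

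The main obstacle is purely bookkeeping: one must make sure the Lipschitz constant controlling the error does not blow up faster than the mesh $\tfrac1k$ shrinks on the compact set where convergence is tested. This is handled by the observation that on any fixed ball $B_R$ the relevant Lipschitz constant is $L(R+1)$, a \emph{constant} independent of $k$, so the bound $2L(R+1)/k$ genuinely goes to zero. A secondary technical point is the existence of subgradients and their local boundedness, but both are standard facts for finite convex functions on $\R^n$ (see \cite{RockafellarWets}), so no difficulty arises there. Alternatively, and perhaps more cleanly, one can invoke directly the standard result that finite convex functions are approximated locally uniformly by suprema of finitely many of their affine minorants, citing \cite{RockafellarWets}; the construction above is just the explicit form of that result.
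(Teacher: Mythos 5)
Your construction is precisely the ``standard approximation result'' the paper cites without proof: the paper observes that epi-convergence on $\fconvf$ is uniform convergence on compact sets and then refers to \cite{RockafellarWets}; you are making explicit the underlying argument, namely taking the supremum of affine minorants at the points of a fine net of a large ball and sending the mesh to zero and the radius to infinity. So the approach is essentially the paper's, just spelled out.

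One bookkeeping slip in the middle of your argument: the inequality ``$L(k)\le L(R+1)$ once $k\ge R$'' is reversed. You defined $L(k)$ as a Lipschitz constant for $v$ on $B_{k+1}$, and for $k\ge R$ we have $B_{k+1}\supset B_{R+1}$, hence $L(k)\ge L(R+1)$, not $\le$. The estimate as displayed therefore gives the error bound $2L(k)/k$, which need not tend to zero for an arbitrary finite convex function. The correct (and easy) fix is the one you state in your final paragraph: once $y\in B_R$ and $|y-x_j|\le 1/k\le 1$, both $y$ and $x_j$ lie in $B_{R+1}$, so in the display you should use the Lipschitz constant of $v$ on, say, $B_{R+2}$ (which also bounds $|p_{x_j}|$ for $x_j\in B_{R+1}$); that constant is independent of $k$, and the bound $2L(R+2)/k\to 0$ follows. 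With that correction the proof is complete and matches the paper's intent.
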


We also need to introduce a counterpart of $\fconvfpa$ in  $\fconvs$.
For given polytopes $P,P_1,\dots,P_m\in\cP^n$, the collection $\{P_1,\dots,P_m\}$
is called a {\em polytopal partition} of $P$ if
$
P=\bigcup_{i=1}^m P_i$ and the $P_i$'s have pairwise disjoint interiors. A function $u\in\fconvs$ belongs to $\fconvpa$ if there exists a polytope $P$ and a polytopal partition 
$\{P_1,\dots,P_m\}$ of $P$ such that
\begin{equation*}
u=\bigwedge_{i=1}^m(w_i+\ind_{P_i})
\end{equation*}
where $w_1, \dots, w_m: \R^n \to \R$ are affine.
\goodbreak

By \cite[Theorem 11.14]{RockafellarWets}, a function $u$ is in $\fconvpa$ if and only if $u^*$ is in $\fconvfpa$. Hence, we obtain the following consequence of Proposition \ref{convergence conjugates} and Proposition \ref{approximation lemma finite}.

\begin{corollary}\label{approximation lemma}
For every $u\in\fconvs$, there exists a sequence in $\fconvpa$ which epi-converges to $u$.
\end{corollary}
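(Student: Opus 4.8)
The plan is to transfer the approximation statement from $\fconvf$ to $\fconvs$ via the Legendre transform, exactly as the text between Proposition~\ref{approximation lemma finite} and the corollary suggests. First I would recall the cited fact \cite[Theorem 11.14]{RockafellarWets}: a function $u\in\fconvs$ is piecewise affine in the sense of $\fconvpa$ if and only if its conjugate $u^*$ is piecewise affine in the sense of $\fconvfpa$; and more basically that conjugation is an involutive bijection between $\fconvf$ and $\fconvs$ (so $u\in\fconvs$ iff $u^*\in\fconvf$, with $u^{**}=u$). Then, given $u\in\fconvs$, set $v:=u^*\in\fconvf$.

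Next I would apply Proposition~\ref{approximation lemma finite} to $v$: there is a sequence $v_k\in\fconvfpa$ with $v_k$ epi-converging to $v$. Define $u_k:=v_k^*$. By the characterization of piecewise affine functions just quoted, each $u_k$ lies in $\fconvpa$ (since each $v_k$ lies in $\fconvfpa$, and the $v_k$'s are the conjugates of the $u_k$'s because conjugation is involutive, $u_k^*=v_k^{**}=v_k$). By Proposition~\ref{convergence conjugates}, epi-convergence $v_k\to v$ is equivalent to epi-convergence of the conjugates, i.e. $v_k^*\to v^*$, which reads $u_k\to u^{**}=u$. Hence $u_k\in\fconvpa$ and $u_k$ epi-converges to $u$, which is the assertion of the corollary.

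The argument is essentially a one-line composition of three facts already on the table, so there is no genuine obstacle; the only thing requiring a little care is bookkeeping with the conjugation being an involution on the relevant spaces, so that ``$v_k$ is the conjugate of $u_k$'' holds and both Proposition~\ref{convergence conjugates} and \cite[Theorem 11.14]{RockafellarWets} can be applied in the correct direction. One should also note that Proposition~\ref{convergence conjugates} is stated for sequences in $\fconvx$, and here all the functions $u,u_k,v,v_k$ do lie in $\fconvx$ (they lie in the subspaces $\fconvs$ and $\fconvf$), so the proposition applies without modification.
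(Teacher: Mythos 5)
Your proof is correct and follows exactly the route the paper indicates in the paragraph preceding the corollary: conjugate $u$ into $\fconvf$, approximate there via Proposition~\ref{approximation lemma finite}, and transfer back using \cite[Theorem~11.14]{RockafellarWets} together with Proposition~\ref{convergence conjugates}. Nothing is missing, and the bookkeeping with the involutivity of conjugation is exactly what is needed.
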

Since $\fconvs$ is a dense subset of $\fconv$, it is easy to see that the statement of Corollary~\ref{approximation lemma} also holds if $\fconvs$ is replaced by $\fconv$.

\section{The Inclusion-Exclusion Principle}\label{inclu_exclu}

It is often useful to extend the valuation property (\ref{val_def}) to several convex functions. For valuations on convex bodies, this is an important tool and a consequence of Groemer's extension theorem \cite{Groemer78}.
For $m\geq 1$ and $u_1, \dots, u_m\in\fconvx$, we set $u_J=\bigvee_{j\in J}u_j$ for $\emptyset\neq J\subset\{1,\ldots,m\}$. Let $\abs{J}$ denote the number of elements in $J$.

\begin{theorem}\label{inclusion_exclusion}
  If\, $\oZ:\fconvx \to \R$ is a continuous valuation, then
    \begin{equation}\label{in_ex}
    \oZ(u_1\wedge \dots\wedge u_m)=\sum_{\emptyset\neq
      J\subset\{1,\ldots,m\}}(-1)^{\abs{J} -1} \oZ(u_J)
  \end{equation}
for all $u_1, \dots, u_m\in\fconvx$ and $m\in\N$  whenever $u_1\wedge \dots\wedge u_m\in \fconvx$.
\end{theorem}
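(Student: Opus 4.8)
The plan is to reduce the inclusion-exclusion identity \eqref{in_ex} to a statement about convex bodies via the epigraph correspondence, and then invoke Groemer's extension theorem in the body setting. The starting point is the observation that for $u_1,\dots,u_m\in\fconvx$, the pointwise minimum $u_1\wedge\cdots\wedge u_m$ corresponds, on the level of epigraphs, to the union $\epi u_1\cup\cdots\cup\epi u_m$, while for $\emptyset\neq J\subset\{1,\dots,m\}$ the function $u_J=\bigvee_{j\in J}u_j$ corresponds to the intersection $\bigcap_{j\in J}\epi u_j$. Thus \eqref{in_ex} is precisely the Groemer-type inclusion-exclusion identity written for the lattice generated by the epigraphs. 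The subtlety is that epigraphs are unbounded closed convex sets rather than convex bodies, so Groemer's theorem does not apply verbatim; the hypothesis that $u_1\wedge\cdots\wedge u_m\in\fconvx$ (equivalently, that this minimum is still convex) is exactly what is needed for the combinatorial structure to behave well.

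First I would fix $m$ and argue by induction on $m$. The base case $m=1$ is trivial and $m=2$ is the defining valuation property \eqref{val_def}, applied to the pair $u_1,u_2$; here one must check that $u_1\vee u_2\in\fconvx$ (automatic, as a supremum of convex lsc functions) and that $u_1\wedge u_2\in\fconvx$ (this is the standing hypothesis). For the inductive step, write $u_1\wedge\cdots\wedge u_{m} = (u_1\wedge\cdots\wedge u_{m-1})\wedge u_m$ and apply the two-function identity, provided $(u_1\wedge\cdots\wedge u_{m-1})\vee u_m$ and the relevant minima all lie in $\fconvx$. Using the distributive law $(u_1\wedge\cdots\wedge u_{m-1})\vee u_m = \bigwedge_{j=1}^{m-1}(u_j\vee u_m)$ and the fact that each $u_j\vee u_m\in\fconvx$, one reduces the correction term to another $(m-1)$-fold minimum to which the inductive hypothesis applies; a bookkeeping of the index sets $J$ then reassembles the full alternating sum. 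The point where one must be careful is verifying at each stage that the intermediate functions formed are genuinely in $\fconvx$ — i.e.\ that the minima remain convex — but this follows because every minimum appearing is dominated by $u_1\wedge\cdots\wedge u_m$ on its domain and each such minimum is of the form $\bigwedge_{i}(w_i)$ where the $w_i$ are restrictions of the original convex functions to a common convex region on which the minimum is convex; more cleanly, one checks that $\bigwedge_{j\in I}(u_j\vee u_K)$ is convex whenever $u_1\wedge\cdots\wedge u_m$ is, by comparing epigraphs.

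An alternative, and perhaps cleaner, route avoids the explicit induction: transfer to convex bodies directly. Given that $u_1\wedge\cdots\wedge u_m\in\fconvx$, the sets $\epi u_1,\dots,\epi u_m$ form a family of closed convex sets whose union is convex; intersecting with a large ball $B_R\times[-R,R]\subset\R^{n+1}$ and taking $R\to\infty$ produces, for each fixed $R$, a finite family of convex bodies in $\R^{n+1}$ to which Groemer's extension theorem \cite{Groemer78} applies, yielding the inclusion-exclusion identity for any valuation on $\cK^{n+1}$ restricted to that family. One then needs a valuation on bodies that records $\oZ$; since $\oZ$ is only defined on functions, the correct statement is that the function-valued map $A\mapsto \oZ(u_A)$, where $u_A$ is the convex function whose epigraph is the ``lower envelope'' of $A$, is a valuation in the body sense on the relevant sublattice, and continuity of $\oZ$ lets one pass $R\to\infty$ in the resulting finite identity. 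The main obstacle in this approach is making the truncation-and-limit argument rigorous: one must ensure that the truncated epigraphs still correspond to functions in $\fconvx$ (adding an indicatrix $\ind_{B_R}$ works) and that epi-convergence as $R\to\infty$ holds so that continuity of $\oZ$ can be invoked on every term $\oZ(u_J)$ simultaneously. I expect the genuine difficulty to be concentrated there — in the careful interplay between the combinatorial identity, the convexity of all partial minima, and the limiting procedure — rather than in any single computation.
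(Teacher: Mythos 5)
Your proposal correctly isolates the key tool (a Groemer-type extension theorem) and the real obstruction (the intermediate minima produced by a naive induction need not lie in $\fconvx$), but neither of your two routes is carried through, and the first contains a genuine error.

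In your Route 1 you assert that $\bigwedge_{j\in I}(u_j\vee u_K)$ is convex whenever $u_1\wedge\cdots\wedge u_m$ is; this is false. Take $n=1$, $m=3$, $u_1(x)=|x-1|$, $u_2(x)=|x+1|$, $u_3\equiv 0$. Then $u_1\wedge u_2\wedge u_3\equiv 0\in\fconvsone$, yet $(u_1\vee u_3)\wedge(u_2\vee u_3)=u_1\wedge u_2=\min(|x-1|,|x+1|)$, which is a W-shape and not convex. So the direct induction on $m$ cannot be run by applying the two-function valuation identity to $u=u_1\wedge\cdots\wedge u_{m-1}$, $v=u_m$: the hypothesis gives you $u\wedge v\in\fconvx$ but there is no reason for $u$ itself to be in $\fconvx$, and your argument to repair this does not hold. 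This is not a bookkeeping issue but the precise reason an extension theorem is needed.

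The paper instead proves the extension first and only then does the induction. Concretely, it establishes (as a slight modification of Groemer's theorem, stated for the set $\cCe$ of epigraphs with the Painlev\'e--Kuratowski topology) that a continuous valuation on $\fconvx$ extends uniquely to a valuation on the lattice $\bigwedge\fconvx$ of finite minima of elements of $\fconvx$. Once the valuation lives on an honest lattice, the ``simple induction argument'' you alluded to is unproblematic, because every intermediate minimum is in the lattice by construction; the extended valuation then coincides with $\oZ$ on all the terms $u_J$ and on $u_1\wedge\cdots\wedge u_m$, since these lie in $\fconvx$. Your Route 2 (truncating epigraphs to $B_R\times[-R,R]$, applying Groemer in $\cK^{n+1}$, and passing to the limit) is in the same spirit, and you correctly flag where the difficulty lies: the map $K\mapsto\oZ(u_K)$ is only defined on truncated epigraphs, not on all of $\cK^{n+1}$, so Groemer for bodies does not apply verbatim, and the epi-convergence as $R\to\infty$ has to be verified for each of the exponentially many terms. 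The paper avoids the truncation altogether by adapting Groemer's proof directly to the class $\cCe$; it omits that adaptation, so there is still work there, but it sidesteps both problems your two routes run into.
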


Note that $\fconv$ and $\fconvs$ are closed under the operation of taking maxima. Hence Theorem \ref{inclusion_exclusion} also holds with $\fconvx$ replaced by one of these spaces.

\goodbreak
Let $\bigwedge\fconvx$ denote the set of finite minima of convex functions from $\fconvx$. It is easy to see that $\bigwedge \fconvx$ is a lattice. If $\oZ$ is a valuation on a lattice, a simple induction argument shows that the inclusion-exclusion principle (\ref{in_ex}) holds. Hence Theorem \ref{inclusion_exclusion} is a consequence of the following extension result.

\begin{theorem}\label{extend_function} A continuous valuation on $\fconvx$ admits a unique extension to a
valuation on the lattice $\bigwedge\fconvx$.
\end{theorem}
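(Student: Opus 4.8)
The plan is to mimic Groemer's extension theorem for valuations on convex bodies, with convex functions in place of convex bodies and pointwise maxima in place of unions. The key observation is that $\bigwedge\fconvx$ is a distributive lattice under $\vee$ and $\wedge$, so every element can be written (non-uniquely) as a finite minimum $u_1\wedge\cdots\wedge u_m$ with $u_j\in\fconvx$. Given a continuous valuation $\oZ$ on $\fconvx$, I would \emph{define} the extension $\obZ$ on such an element by the inclusion-exclusion formula
\begin{equation*}
\obZ(u_1\wedge\cdots\wedge u_m)=\sum_{\emptyset\neq J\subset\{1,\ldots,m\}}(-1)^{\abs{J}-1}\oZ(u_J),
\end{equation*}
where $u_J=\bigvee_{j\in J}u_j$ (note $u_J\in\fconvx$ since $\fconvx$ is closed under finite maxima). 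The entire content of the theorem is then: (a) this is well-defined, i.e.\ independent of the chosen representation as a finite minimum; (b) the resulting $\obZ$ is a valuation on $\bigwedge\fconvx$; and (c) it is the unique such extension.

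For the well-definedness (a), I would argue by induction on $m$. The base case $m=1$ is trivial. For the inductive step, the standard trick is to show that if $u_1\wedge\cdots\wedge u_m = u_1'\wedge\cdots\wedge u_{m'}'$, one can pass between the two representations by elementary moves: inserting a redundant function $u_{m+1}\geq u_1\wedge\cdots\wedge u_m$ (which leaves the formula unchanged because all the new terms telescope, using that $u_J\vee u_{m+1}=u_{m+1}$ or recombining), and permutations of the list. Concretely, one checks that $\obZ$ is unchanged under appending a function $v$ with $v\geq u_1\wedge\cdots\wedge u_m$: split the sum over subsets $J\subset\{1,\ldots,m+1\}$ according to whether $m+1\in J$, and use the valuation identity \eqref{val_def} for $\oZ$ repeatedly (in the form of finite inclusion-exclusion \emph{on $\fconvx$ itself}, which holds for $\oZ$ since maxima of functions in $\fconvx$ stay in $\fconvx$ — this is a routine induction on \eqref{val_def}) to see the added terms cancel. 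Any two representations of the same lattice element have a common ``refinement'' obtained by appending all the $u_i'$ to the $u_j$ list and vice versa, each addition being of this redundant type, so the two values agree.

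Once $\obZ$ is well-defined, property (b) is formal: given $f,g\in\bigwedge\fconvx$ with representations $f=\bigwedge_i u_i$, $g=\bigwedge_j v_j$, one has $f\wedge g=\bigwedge_{i}u_i\wedge\bigwedge_j v_j$ and $f\vee g=\bigwedge_{i,j}(u_i\vee v_j)$ by distributivity, and expanding $\obZ(f\wedge g)+\obZ(f\vee g)$ via the defining formula and comparing with $\obZ(f)+\obZ(g)$ reduces again to finite inclusion-exclusion for $\oZ$ on $\fconvx$. Uniqueness (c) follows since any valuation on the lattice $\bigwedge\fconvx$ agreeing with $\oZ$ on $\fconvx$ must satisfy \eqref{in_ex} by a short induction on $m$, hence equals $\obZ$. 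The main obstacle — and the only place where genuine care is needed rather than bookkeeping — is the well-definedness step (a): one must handle carefully the fact that the representation as a finite minimum is far from unique, and verify that the ``add a redundant function'' move really does leave the alternating sum invariant, which is where the valuation property of $\oZ$ on $\fconvx$ enters essentially. Continuity of $\oZ$ plays no role in the extension argument itself; it is relevant only because it guarantees that finite inclusion-exclusion holds for $\oZ$ on $\fconvx$ in the first place (indeed, as noted in the excerpt, continuity is not even needed for that — the valuation property alone suffices — so the argument goes through for any valuation, continuous or not, which is consistent with the statement).
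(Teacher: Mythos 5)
There is a genuine gap at the well-definedness step. Your argument rests on the claim that inclusion-exclusion for $\oZ$ on $\fconvx$ "is a routine induction on \eqref{val_def}", justified only by noting that $\fconvx$ is closed under $\vee$. But \eqref{val_def} applies to a pair $u,v$ only when \emph{both} $u\vee v$ \emph{and} $u\wedge v$ lie in $\fconvx$, and $\fconvx$ is not closed under $\wedge$: the pointwise minimum of two convex functions is in general not convex. The routine induction from the two-element identity to the $m$-element formula passes through intermediate minima; already in your "append a redundant $v\ge u_1\wedge u_2\wedge u_3$" cancellation one meets $(u_1\vee v)\wedge(u_2\vee v)=(u_1\wedge u_2)\vee v$, which need not belong to $\fconvx$, and the induction stalls. (The case $m=2$ works because $(u_1\vee v)\wedge(u_2\vee v)=v\in\fconvx$, but that is exactly where the easy cases end.) Note also that the paper derives the inclusion-exclusion principle on $\fconvx$ (Theorem~\ref{inclusion_exclusion}) \emph{as a consequence of} Theorem~\ref{extend_function}; using it to prove Theorem~\ref{extend_function} would be circular unless you supply an independent argument, and the "routine induction" does not provide one.

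Your closing remark that continuity "plays no role" and "the valuation property alone suffices" is unjustified and undercuts the hypothesis of the theorem. Full additivity holds for valuations on polytopes without continuity, but the extension to the continuous setting is precisely where Groemer's argument uses continuity essentially (approximation and passage to the limit). The paper does not attempt a from-scratch proof: it identifies a convex function with its epigraph in $\R^{n+1}$, so that $\fconvx$ corresponds to $\cCe$, $\vee$ to intersection, and $\wedge$ to union, and then invokes a slight modification of Groemer's extension theorem for this class of closed convex sets. A self-contained proof along your lines would have to reproduce that machinery rather than cancel terms combinatorially.
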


We identify a convex function with its epigraph. Let $\cCe$ be the set of closed convex sets in $\R^{n+1}$ that are epigraphs of functions in $\fconvx$ and  equip this set  with the Painlev\'e-Kuratowski topology, which corresponds to the topology induced by epi-convergence (see, for example, \cite[Definition 7.1]{RockafellarWets}). A slight modification of Groemer's extension theorem \cite{Groemer78} (or see \cite[Theorem 6.2.3]{Schneider:CB2}  or \cite{Klain:Rota}) shows that the following statement is true (we omit the proof). Here $\bigcup\cCe$ is the set of all finite unions of elements from $\cCe$. Theorem~\ref{extend_function} is equivalent to Theorem \ref{extend}.

\begin{theorem}\label{extend} A continuous valuation on $\cCe$ admits a unique extension to a
valuation on the lattice $\bigcup\cCe$.
\end{theorem}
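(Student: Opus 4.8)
The plan is to prove Theorem~\ref{extend} by following the classical Groemer pattern, suitably adapted to the space $\cCe$ of epigraphs. The key structural facts that make this work are: (i) $\cCe$ is closed under finite intersections that remain epigraphs, and more importantly closed under binary unions when the union is again an epigraph; (ii) every element of $\bigcup\cCe$ can be written as a finite union of members of $\cCe$; and (iii) continuity with respect to Painlev\'e--Kuratowski convergence gives us enough rigidity to pin down the extension uniquely. The overall strategy is: first establish uniqueness of any valuation extension, then construct the extension by the inclusion--exclusion formula on unions, and finally verify that the constructed functional is well-defined (independent of the chosen representation as a union) and is itself a valuation on $\bigcup\cCe$.

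First I would record the reduction: by the correspondence $u\leftrightarrow\epi u$ and the fact (Section~\ref{prelim_functions}) that epi-convergence corresponds to Painlev\'e--Kuratowski convergence of epigraphs, Theorem~\ref{extend_function} and Theorem~\ref{extend} are equivalent, so it suffices to work with $\cCe$. For uniqueness, suppose $\obZ_1,\obZ_2$ both extend $\oZ$ to $\bigcup\cCe$; given $C=C_1\cup\dots\cup C_m$ with $C_j\in\cCe$, an induction on $m$ using the valuation identity on the lattice $\bigcup\cCe$ forces $\obZ_1(C)=\obZ_2(C)$, because the valuation property propagates the equality from single sets to unions via the standard inclusion--exclusion expansion (each intersection $C_J=\bigcap_{j\in J}C_j$ lies in $\cCe$, being the epigraph of $\bigvee_{j\in J}u_j$, and $\oZ$ is defined there). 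For existence, define $\obZ$ on $C=C_1\cup\dots\cup C_m$ by
$$
\obZ(C)=\sum_{\emptyset\neq J\subset\{1,\dots,m\}}(-1)^{|J|-1}\,\oZ\Big(\bigcap_{j\in J}C_j\Big),
$$
which makes sense because each such intersection is an epigraph of the pointwise maximum of the corresponding functions, hence in $\cCe$.

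The main obstacle — and the technical heart of the argument — is to show that $\obZ(C)$ does not depend on the chosen decomposition $C=C_1\cup\dots\cup C_m$, and that $\obZ$ so defined satisfies the valuation identity on $\bigcup\cCe$. This is exactly where Groemer's theorem does its work: the clean way is to set up an abstract lemma stating that if one has a lattice-like system with the right closure properties and a valuation on the ``irreducible'' pieces satisfying inclusion--exclusion, then the formula above extends it consistently. One verifies well-definedness by checking it on two canonical modifications of a decomposition — refining one set $C_i$ into $C_i\cap C_{m+1}$ together with $C_i$ (splitting), and adding a redundant set contained in the union — and showing $\obZ$ is invariant under each; since any two decompositions of the same set are connected by such moves, well-definedness follows. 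The valuation property $\obZ(C\cup C')+\obZ(C\cap C')=\obZ(C)+\obZ(C')$ for $C,C'\in\bigcup\cCe$ then reduces, after distributing intersections over the unions, to a purely combinatorial identity among the terms of the inclusion--exclusion sums, which is the standard computation underlying Groemer's lemma.

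Finally I would note the two places where the specific geometry of epigraphs (rather than convex bodies) matters and must be checked rather than quoted verbatim: that $\cCe$ really is closed under the relevant binary unions (a union of two epigraphs is an epigraph precisely when it is convex, in which case it equals the epigraph of the pointwise minimum, which lies in $\fconvx$ provided it is still lower semicontinuous and not identically $+\infty$ — both automatic here since we start from genuine epigraphs whose union is assumed convex), and that intersections of epigraphs are again epigraphs (immediate, as $\epi u\cap\epi v=\epi(u\vee v)$ and $u\vee v\in\fconvx$). Since the paper explicitly states that the proof is omitted as a ``slight modification of Groemer's extension theorem,'' a full write-up is not expected; the above is the route one would take, with the well-definedness of the inclusion--exclusion extension being the step that requires genuine care.
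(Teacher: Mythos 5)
The paper states explicitly that the proof of Theorem~\ref{extend} is omitted as a ``slight modification of Groemer's extension theorem,'' so there is no in-paper argument to compare against; the only question is whether your sketch is sound. Your high-level plan --- reduce to epigraphs, get uniqueness from the forced inclusion--exclusion formula, construct the extension by that same formula --- is the right Groemer-type route, and your observations that $\epi u\cap\epi v=\epi(u\vee v)$ and that a convex union of two epigraphs is the epigraph of the minimum are correct.

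However, the step you yourself flag as the technical heart, well-definedness, is where the proposal breaks down, and the gap is substantive. You propose to verify well-definedness by showing the inclusion--exclusion sum is invariant under two ``moves'' (splitting and adding a redundant set) and asserting that any two decompositions are connected by such moves. The connectivity claim is asserted without proof, and the key move is circular: to show that appending a redundant $D\in\cCe$ with $D\subset C=\bigcup_i C_i$ leaves the sum unchanged, one is led, after cancelling the terms not involving $D$, to the identity $\oZ(D)=\sum_{\emptyset\neq J\subset\{1,\dots,m\}}(-1)^{|J|-1}\oZ(D\cap C_J)$, which is precisely the inclusion--exclusion statement for the decomposition $D=\bigcup_i(D\cap C_i)$, i.e.\ the very thing under proof. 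Moreover, your sketch never actually uses the continuity hypothesis, even though it is essential: Groemer's argument establishes, via an induction combined with a slicing/approximation step that uses continuity to pass to limits, the lemma that any relation $\sum_i\alpha_i\ind_{C_i}=0$ among indicator functions forces $\sum_i\alpha_i\oZ(C_i)=0$; well-definedness and the valuation property on $\bigcup\cCe$ then follow immediately from the algebra of indicator functions. You invoke continuity only for uniqueness, but uniqueness is the easy part that does not need it --- it is existence that does. Finally, for $m\geq 3$ an intersection $C_J$ can be empty (your binary-union argument rules out only the pairwise case), so one must adjoin $\emptyset$ to the lattice with $\oZ(\emptyset)=0$ or prune such terms; this is minor, but it should be stated rather than implicitly assumed away.
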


We require the following simple consequence of the inclusion-exclusion principle, Theorem \ref{inclusion_exclusion} and of Corollary \ref{approximation lemma}.

\begin{lemma}\label{null lemma}
Let $\oZ$ be a continuous valuation on $\fconvs$ (or on $\fconv$). If
\begin{equation}\label{null assumption}
\oZ(w+\ind_P)=0
\end{equation}
for every affine function $w: \R^n\to \R$ and for every polytope $P$, then $\oZ\equiv0$. 
\end{lemma}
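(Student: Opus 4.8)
The plan is to show that the hypothesis \eqref{null assumption} forces $\oZ$ to vanish on all of $\fconvpa$, and then to invoke density (Corollary~\ref{approximation lemma}) together with continuity to conclude $\oZ\equiv 0$ on $\fconvs$ (and the same argument works verbatim on $\fconv$, using the remark following Corollary~\ref{approximation lemma}). So the core of the argument is the intermediate claim: \emph{if $\oZ(w+\ind_P)=0$ for every affine $w$ and every polytope $P$, then $\oZ(u)=0$ for every $u\in\fconvpa$.}

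To prove the intermediate claim, I would take $u\in\fconvpa$ and write it in its defining form $u=\bigwedge_{i=1}^m(w_i+\ind_{P_i})$, where $\{P_1,\dots,P_m\}$ is a polytopal partition of a polytope $P$ and the $w_i$ are affine. Each building block $u_i:=w_i+\ind_{P_i}$ lies in $\fconvs$ and satisfies $\oZ(u_i)=0$ by hypothesis. Now I would apply the inclusion-exclusion principle (Theorem~\ref{inclusion_exclusion}, valid on $\fconvs$ by the remark following it) to the functions $u_1,\dots,u_m$:
\[
\oZ(u)=\oZ(u_1\wedge\dots\wedge u_m)=\sum_{\emptyset\neq J\subset\{1,\dots,m\}}(-1)^{|J|-1}\oZ(u_J),\qquad u_J=\bigvee_{j\in J}u_j.
\]
The key point is that each $u_J$ is again of the form ``affine function restricted to a polytope,'' so that $\oZ(u_J)=0$ by hypothesis and the whole sum collapses to $0$. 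Indeed, $u_J=\bigvee_{j\in J}(w_j+\ind_{P_j})$; its domain is $\bigcap_{j\in J}P_j$, which is a polytope (possibly empty or lower-dimensional, in which case $u_J\equiv+\infty$ outside and the value is still covered by the hypothesis once one checks the degenerate cases, or one simply notes the term is an admissible argument). On the common domain $Q_J:=\bigcap_{j\in J}P_j$ the function $u_J$ equals the pointwise maximum of finitely many affine functions $w_j$; this maximum need not itself be affine, so $u_J$ is \emph{not} literally of the form $w+\ind_{Q_J}$. This is the one genuine subtlety.

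The hard part, therefore, is handling $u_J=\big(\bigvee_{j\in J}w_j\big)+\ind_{Q_J}$, a piecewise-affine function supported on the polytope $Q_J$, and showing $\oZ(u_J)=0$. I would do this by a second application of inclusion-exclusion at the level of the subdivision: subdivide $Q_J$ into the polytopal cells on which $\bigvee_{j\in J}w_j$ coincides with a single one of the $w_j$, write $\big(\bigvee_{j\in J}w_j\big)+\ind_{Q_J}=\bigwedge_{k} (w_{j_k}+\ind_{R_k})$ over those cells $R_k$, and run the same inclusion-exclusion computation; the intersections $\bigcap R_k$ are again polytopes and the relevant restrictions are single affine functions on polytopes, so every term vanishes by hypothesis. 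Thus one reduces, in finitely many steps, everything to the assumed vanishing on blocks $w+\ind_P$. Alternatively, and perhaps more cleanly, one can carry out a single induction on the total number of affine pieces appearing, the base case being exactly \eqref{null assumption}. Either way, once $\oZ$ vanishes on $\fconvpa$, Corollary~\ref{approximation lemma} provides for arbitrary $u\in\fconvs$ a sequence $u_k\in\fconvpa$ with $u_k\to u$ in the epi-topology, and continuity of $\oZ$ gives $\oZ(u)=\lim_k\oZ(u_k)=0$, completing the proof.
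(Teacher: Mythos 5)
Your proof follows the paper's outline precisely: reduce to $\fconvpa$ by density (Corollary~\ref{approximation lemma}) and continuity, then apply the inclusion--exclusion principle (Theorem~\ref{inclusion_exclusion}) to the decomposition $u=\bigwedge_{i=1}^m(w_i+\ind_{P_i})$. You correctly flag as the one delicate point whether $u_J=\bigvee_{j\in J}(w_j+\ind_{P_j})$ is actually covered by the hypothesis \eqref{null assumption}, since it a priori looks only piecewise affine on $Q_J=\bigcap_{j\in J}P_j$, and you handle this with a second layer of inclusion--exclusion over a refinement of $Q_J$. That works, but it is a detour: since $u\in\fconvpa$ is convex and $\{P_1,\dots,P_m\}$ is a polytopal partition, $u$ coincides with $w_i$ on each cell $P_i$ (on the interior of a full-dimensional cell because the other terms of the minimum only meet it in a nowhere dense set, and then on all of $P_i$ by continuity of a convex piecewise affine function on its domain). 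Hence for every $j\in J$ one has $w_j=u$ on $Q_J\subset P_j$, so the $w_j$ with $j\in J$ all agree on $Q_J$, and therefore $\bigvee_{j\in J}(w_j+\ind_{P_j})=w_{j_0}+\ind_{Q_J}$ for any $j_0\in J$. Thus \eqref{null assumption} applies directly; this one-line observation is what the paper is tacitly invoking in its final sentence. Note also that your second inclusion--exclusion does not really avoid an observation of this type: to see that each $\bigvee_{k\in K}(w_{j_k}+\ind_{R_k})$ is an affine function on a polytope you still need the $w_{j_k}$ to agree on $\bigcap_{k\in K}R_k$; you just arrange this to hold by construction of the cells $R_k$ rather than deducing it from convexity of $u$. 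So the proposal is correct and reaches the same conclusion, but the extra inclusion--exclusion step can be dispensed with.
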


\begin{proof} By Corollary \ref{approximation lemma} (and the remark following it), it suffices to prove that $\oZ(u)=0$ for\linebreak$u\in\fconvs$ (or $u\in\fconv$) that is piecewise affine. So, let $u= \bigwedge_{i=1}^m (w_i+\ind_{P_i})$ with $w_1, \dots, w_m$ affine and $P_1, \dots, P_m\in\cP^n$. 
By  Theorem \ref{inclusion_exclusion} (and the remark following it), it is enough to show that
$$\oZ\left(\bigvee_{j\in J} (w_j+\ind_{P_j})\right)=0$$
for every $\emptyset\ne J\subset \{1,\dots, m\}$. This follows from (\ref{null assumption}) as  $\bigvee_{j\in J} (w_j+\ind_{P_j})$ is a piecewise affine function restricted to a polytope.
\end{proof}

\section{Hessian measures and valuations}\label{section hessian measures}

For $u\in\fconvx$ and $x\in\R^n$, we denote by $\partial u(x)$ the subgradient of $u$ at $x$, that is,
$$
\partial u(x)=\{y\in\R^n\colon u(z)\ge u(x)+\langle z-x,y\rangle\, \text{ for all }\,z\in\R^n\}.
$$
We set
$$
\Gamma_u=\{(x,y)\in\R^n\times\R^n\colon y\in\partial u(x)\}.
$$
In other words, $\Gamma_u$ is the generalized graph of $\partial u$.  

Next, we recall the notion of Hessian measures of a function $u\in\fconvx$. These are non-negative Borel measures defined 
on the Borel subsets of $\R^n\times\R^n$, which we will denote by $\Theta_i(u,\cdot)$ with $i=0,\dots,n$. Their definition can be given as 
follows (see also \cite{ColesantiHug2000, ColesantiHug2000a, ColesantiLudwigMussnig3}). Let $\eta\subset\R^n\times\R^n$ be a Borel set and $s>0$. Consider the following set
$$
P_s(u,\eta)=\{x+sy\colon(x,y)\in\Gamma_u\cap\eta\}.
$$
It can be proven (see Theorem 7.1 in \cite{ColesantiLudwigMussnig3}) that $P_s(u,\eta)$ is measurable and that its measure is a polynomial
in the variable $s$, that is, there exists $(n+1)$ non-negative coefficients $\Theta_i(u,\eta)$ such that
$$
\hm^n(P_s(u,\eta))=\sum_{i=0}^n\binom ni s^i\Theta_{n-i}(u,\eta).
$$ 
Here $\hm^n$ is the $n$-dimensional Hausdorff measure in $\R^n$, normalized so that it coincides with the Lebesgue measure in $\R^n$.
The previous formula defines the Hessian measures of $u$; for more details we refer the reader to 
\cite{ColesantiHug2000, ColesantiHug2000a, ColesantiLudwigMussnig3}.

According to Theorem 8.2 in \cite{ColesantiLudwigMussnig3},  for every $v\in\fconvf$ and for every Borel subset
$\eta$ of $\R^n\times\R^n$
\begin{equation}\label{Remark on Hessian measures}
\Theta_i(v,\eta)=\Theta_{n-i}(v^*,\hat \eta),
\end{equation}
where $
\hat\eta=\{(x,y)\in\R^n\times\R^n\colon (y,x)\in\eta\}$.

We require the following statement for Hessian valuations for $i=0$. As the proof is the same for all indices $i$, we give the more general statement. Let $[\Hess v(x)]_i$ be the $i$-th elementary symmetric function of the eigenvalues of the Hessian matrix $\Hess v$.

\begin{theorem}\label{theorem 1}
Let $\zeta\in C(\R\times\R^n\times\R^n)$ have compact support with respect to the second variable. For $i\in\{0,1,\dots,n\}$, 
\begin{equation}\label{eq:z_thm_1}
\oZ(v)=\int_{\R^n\times\R^n} \zeta(v(x),x,y)\d\Theta_i(v,(x,y))
\end{equation}
is well defined for every $v\in\fconvf$ and defines a continuous valuation on $\fconvf$. 
Moreover, 
\begin{equation}\label{smooth}
\oZ(v)=\int_{\R^n}\zeta(v(x),x,\nabla v(x))\ [\Hess v(x)]_{n-i}\d x
\end{equation}
for every $v\in\fconvf\cap C^2(\R^n)$.
\end{theorem}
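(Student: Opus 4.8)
The plan is to deduce all three assertions from the theory of Hessian measures and Hessian valuations developed in \cite{ColesantiLudwigMussnig3} (building also on \cite{ColesantiHug2000,ColesantiHug2000a}); the one new point is that compact support of $\zeta$ in the second variable alone suffices, and this is obtained by localization. For well-definedness, fix $v\in\fconvf$ and let $L\subset\R^n$ be a compact set containing the support of $\zeta$ in its second variable. Since a finite convex function is locally Lipschitz, $\partial v(x)\subseteq c\,B^n$ for all $x\in L$ and some $c>0$, where $B^n$ is the Euclidean unit ball; hence $P_s(v,L\times\R^n)\subseteq L+s\,c\,B^n$ is bounded for every $s>0$, so the polynomial $s\mapsto\hm^n(P_s(v,L\times\R^n))=\sum_{i=0}^n\binom ni s^i\Theta_{n-i}(v,L\times\R^n)$ is finite-valued and each $\Theta_i(v,L\times\R^n)$ is finite. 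As $\Theta_i(v,\cdot)$ is concentrated on $\Gamma_v$ and the continuous (hence Borel) integrand $(x,y)\mapsto\zeta(v(x),x,y)$ vanishes on $\Gamma_v$ outside the compact set $L\times c\,B^n$, the integral defining $\oZ(v)$ is a finite real number.

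For the valuation property and for continuity I reduce to the case of an integrand with compact support in all three variables, which is the case treated in \cite{ColesantiLudwigMussnig3}. Given finitely many functions in $\fconvf$ — say $v_1,\dots,v_m$ together with the finite maxima and minima that are assumed to belong to $\fconvf$ — the estimate above, applied to each, provides a single ball $c\,B^n$ containing all their subgradients over $L$ and, by continuity, a single compact interval $I\subset\R$ containing all their values over $L$. Choosing $\rho\in C_c(\R)$ with $\rho\equiv1$ on $I$ and $\chi\in C_c(\R^n\times\R^n)$ with $\chi\equiv1$ on $L\times c\,B^n$, and putting $\tilde\zeta(t,x,y)=\zeta(t,x,y)\,\rho(t)\,\chi(x,y)\in C_c(\R\times\R^n\times\R^n)$, the argument of the first paragraph shows $\oZ(w)=\int_{\R^n\times\R^n}\tilde\zeta(w(x),x,y)\d\Theta_i(w,(x,y))$ for every $w$ in the family, so the valuation identity \eqref{val_def} for $\oZ$ follows from the one for the Hessian valuation attached to $\tilde\zeta$ in \cite{ColesantiLudwigMussnig3}. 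Continuity is analogous: if $v_k\to v$ in $\fconvf$, then $v_k\to v$ uniformly on compact sets, so one pair $\rho,\chi$ serves for $\{v\}\cup\{v_k\}_{k\ge k_0}$; with $\psi_k(x,y)=\tilde\zeta(v_k(x),x,y)$ and $\psi(x,y)=\tilde\zeta(v(x),x,y)$ in $C_c(\R^n\times\R^n)$, one splits $\oZ(v_k)-\oZ(v)=\int(\psi_k-\psi)\d\Theta_i(v_k,\cdot)+\big(\int\psi\d\Theta_i(v_k,\cdot)-\int\psi\d\Theta_i(v,\cdot)\big)$; the first term is at most $\|\psi_k-\psi\|_\infty\,\Theta_i(v_k,\operatorname{supp}\chi)$, where $\|\psi_k-\psi\|_\infty\to0$ by uniform continuity of $\tilde\zeta$ and uniform convergence of $v_k$, and $\Theta_i(v_k,\operatorname{supp}\chi)$ stays bounded by the continuity of the Hessian valuation of a fixed cutoff, again from \cite{ColesantiLudwigMussnig3}; the second term tends to $0$ because $\psi$ is a fixed function in $C_c(\R^n\times\R^n)$.

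It remains to prove \eqref{smooth} for $v\in\fconvf\cap C^2(\R^n)$. Then $\Gamma_v=\{(x,\nabla v(x)):x\in\R^n\}$ and $T_s\colon x\mapsto x+s\,\nabla v(x)$ is the gradient of the strictly convex function $x\mapsto\tfrac12\abs{x}^2+s\,v(x)$, hence a bijection of $\R^n$ with $\det(\operatorname{Id}+s\,\Hess v(x))=\sum_{k=0}^n\binom nk s^k[\Hess v(x)]_k$. The area formula gives, for every Borel set $\eta$, $\hm^n(P_s(v,\eta))=\int_{\{x:(x,\nabla v(x))\in\eta\}}\det(\operatorname{Id}+s\,\Hess v(x))\d x$; comparing coefficients of equal powers of $s$ with the defining relation $\hm^n(P_s(v,\eta))=\sum_{i=0}^n\binom ni s^i\Theta_{n-i}(v,\eta)$ shows that $\Theta_i(v,\cdot)$ is the push-forward of $[\Hess v(x)]_{n-i}\d x$ under $x\mapsto(x,\nabla v(x))$, and substituting this into \eqref{eq:z_thm_1} gives \eqref{smooth}.

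Since the analytic substance — the well-posedness and weak continuity of the Hessian measures and, above all, their additivity under $\vee$ and $\wedge$ — is supplied by \cite{ColesantiLudwigMussnig3}, the main work here is the localization bookkeeping of the second paragraph, in particular the uniform Lipschitz and subgradient bounds over $L$. The one extra point to watch is that the dependence of $\zeta$ on the value $v(x)$ does not interfere with additivity: when $v_1\wedge v_2\in\fconvf$, in the interior of $\{v_1<v_2\}$ the function $v_1\wedge v_2$ has the same value, subgradient, and Hessian-measure density as $v_1$, while $v_1\vee v_2$ agrees there with $v_2$ (and symmetrically on $\{v_2<v_1\}$), so the value-dependent integrand behaves exactly as a value-independent one, and any extra mass the $\Theta_i$ place on the common boundary $\{v_1=v_2\}$ is shared by $v_1\wedge v_2$ and $v_1\vee v_2$ and cancels in \eqref{val_def}, just as in \cite{ColesantiLudwigMussnig3}.
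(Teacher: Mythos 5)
Your proof is correct and follows essentially the same route as the paper's: use the compact support of $\zeta$ in the $x$-variable together with local Lipschitz bounds to see that, for the functions under consideration, the relevant subgradients stay in a fixed ball, multiply by a cutoff in $y$ to pass to an integrand with compact support in $x$ and $y$, and then invoke Theorem~1.1 of \cite{ColesantiLudwigMussnig3} (quoted here as Theorem~\ref{thm:hessian_measures_main_result}), from which the valuation property, continuity and the $C^2$ formula \eqref{smooth} all follow. The only differences are cosmetic: you additionally cut off in $t$ and $x$ (harmless but not needed, since the cited theorem requires compactness only in the second and third variables), you spell out well-definedness and split the continuity estimate into two terms instead of applying the cited theorem's continuity in one step, and you rederive \eqref{smooth} from the area formula rather than citing \eqref{smooth_x} directly.
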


\goodbreak

We use the following result.

\begin{theorem}[\!\!\cite{ColesantiLudwigMussnig3}, Theorem 1.1]\label{thm:hessian_measures_main_result}
Let $\zeta\in C(\R\times\R^n\times\R^n)$ have compact support with respect to the second and third variables. For every $i\in\{0,1,\ldots,n\}$, the functional defined by
$$v\mapsto \int_{\R^{n}\times\R^n} \zeta(v(x),x,y)\d\Theta_i(v,(x,y))$$
defines a continuous valuation on $\fconvx$. Moreover, 
\begin{equation}\label{smooth_x}
\oZ(v)=\int_{\R^n}\zeta(v(x),x,\nabla v(x))\ [\Hess v(x)]_{n-i}\d x
\end{equation}
for $v\in\fconvx\cap C^2(\R^n)$.
\end{theorem}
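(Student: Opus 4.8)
Proof proposal for Theorem~\ref{thm:hessian_measures_main_result}.

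The plan is to establish, in turn: finiteness of $\oZ(v):=\int_{\R^n\times\R^n}\zeta(v(x),x,y)\d\Theta_i(v,(x,y))$ for every $v\in\fconvx$; the representation \eqref{smooth_x} in the case $v\in C^2(\R^n)$; the valuation property; and continuity for epi-convergence. Fix compact sets $K_1,K_2\subset\R^n$ containing the supports of $\zeta$ in the second and the third variable. For finiteness, note that the integrand vanishes off $(K_1\times K_2)\cap\Gamma_v$, and that on this $\Theta_i(v,\cdot)$-full set $|v(x)|$ is bounded by a constant depending only on $v,K_1,K_2$ (from above by testing a subgradient $y\in\partial v(x)\cap K_2$ against a fixed point of $\dom v$, from below by a fixed affine minorant of $v$), so $\zeta$ is evaluated on a compact set where it is bounded; since $P_s(v,K_1\times K_2)\subset K_1+sK_2$ is bounded for each $s>0$, its $\hm^n$-measure, hence every coefficient $\Theta_j(v,K_1\times K_2)$, is finite. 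For the $C^2$ case, $\partial v(x)=\{\nabla v(x)\}$, so $P_s(v,\eta)$ is the image of $\{x:(x,\nabla v(x))\in\eta\}$ under $x\mapsto x+s\nabla v(x)$, which is the gradient of the strictly convex superlinear $C^1$ function $x\mapsto\tfrac12|x|^2+s\,v(x)$, hence a $C^1$-diffeomorphism of $\R^n$ with positive Jacobian $\det(I+s\,\Hess v(x))$; applying the change-of-variables formula to the defining expansion of $\hm^n(P_s(v,\eta))$ identifies $\Theta_j(v,\cdot)$ with the push-forward of $[\Hess v(x)]_{n-j}\d x$ under $x\mapsto(x,\nabla v(x))$, and substituting this into $\oZ(v)$ yields \eqref{smooth_x}.

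For the valuation property, let $v_1,v_2\in\fconvx$ with $u:=v_1\vee v_2$ and $w:=v_1\wedge v_2$ in $\fconvx$. On a neighbourhood of any point of $\{v_1<v_2\}$ one has $u=v_2$ and $w=v_1$, and symmetrically on $\{v_2<v_1\}$; on the interface $\{v_1=v_2\}$, convexity of $w$ forces $\partial w(x)=\partial v_1(x)\cap\partial v_2(x)$, while $\partial u(x)=\operatorname{conv}(\partial v_1(x)\cup\partial v_2(x))$ and $v_1(x)=v_2(x)=u(x)=w(x)$. From these I would derive the set identities $\Gamma_u\cap\Gamma_w=\Gamma_{v_1}\cap\Gamma_{v_2}$ (which holds verbatim) and $\Gamma_u\cup\Gamma_w=\Gamma_{v_1}\cup\Gamma_{v_2}$ up to a set negligible for every $\Theta_j$: the sole discrepancy is the convex-hull surplus of $\partial u$ over $\partial v_1\cup\partial v_2$, carried over the interface, and the hypothesis $w\in\fconvx$ is exactly what makes this surplus sit in a piece of $\Gamma_u$ of dimension strictly below that of $\Gamma_u$, hence $\Theta_j$-null. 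Next I would invoke the locality of the Hessian measures: near a point of $\Gamma_v$ the measure $\Theta_j(v,\cdot)$ is determined by the germ of $v$ at the corresponding $x$, so it coincides, near any given point, for any two of $v,u,w,v_1,v_2$ whose functions agree on a neighbourhood of that $x$. Combining locality with the two set identities, the inclusion-exclusion valid for a single additive set function on $\R^n\times\R^n$ transfers to the $\Theta_j$, giving $\Theta_j(u,\eta)+\Theta_j(w,\eta)=\Theta_j(v_1,\eta)+\Theta_j(v_2,\eta)$ for every Borel $\eta\subset\R^n\times\R^n$; since the value plugged into $\zeta$ matches at each point as well, integrating yields $\oZ(u)+\oZ(w)=\oZ(v_1)+\oZ(v_2)$.

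For continuity, let $v_k$ epi-converge to $v$ in $\fconvx$; one may assume $K_1\cap\operatorname{int}\dom v\neq\emptyset$, as otherwise both sides eventually vanish. On compact subsets of $\operatorname{int}\dom v$ the $v_k$ are eventually finite, converge uniformly to $v$, and carry a uniform local Lipschitz bound, so their subgradients over $K_1\cap\operatorname{int}\dom v$ lie in a common compact set; together with the compact support of $\zeta$ in the third variable this confines all the integrals' mass, uniformly in $k$, to a fixed compact part of $\operatorname{int}\dom v$ in $x$ and a fixed compact set in $y$. There $\zeta(v_k(x),x,y)\to\zeta(v(x),x,y)$ uniformly, and combined with the weak convergence $\Theta_j(v_k,\cdot)\to\Theta_j(v,\cdot)$ on compact subsets of $\R^n\times\R^n$ — read off from the definition of the Hessian measures via convergence of the sets $P_s(v_k,\cdot)$ and the ensuing convergence of the polynomial coefficients, the compact support of $\zeta$ preventing escape of mass toward $\partial\dom v$ — one passes to the limit to obtain $\oZ(v_k)\to\oZ(v)$.

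The hard part is the valuation step: one must pin down precisely how the generalized graph and the local density of $\Theta_j$ split along $\{v_1=v_2\}$ for $v_1\vee v_2$ and $v_1\wedge v_2$, and show that the subdifferential surplus of the maximum is $\Theta_j$-negligible exactly under the hypothesis $v_1\wedge v_2\in\fconvx$; the genuine locality of the Hessian measures, which lets inclusion-exclusion pass from the graphs to the measures, is the technical core. A secondary difficulty is the weak continuity of the Hessian measures with the correct control of mass near $\partial\dom v$, which is exactly what forces $\zeta$ to have compact support in the second \emph{and} the third variable rather than only in the second.
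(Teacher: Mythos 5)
First, note that the paper does not prove this statement at all: it is imported verbatim as Theorem~1.1 of \cite{ColesantiLudwigMussnig3}, so there is no in-paper proof to compare against. Judged on its own, your proposal has the right overall architecture (finiteness, the $C^2$ formula via the Minty-type change of variables $x\mapsto x+s\nabla v(x)$, then valuation property and continuity), and the finiteness and $C^2$ steps are essentially sound. But the two steps you yourself flag as the core contain genuine gaps. For the valuation property, your nullity argument for the ``convex-hull surplus'' is not valid: the Hessian measures routinely charge sets that are lower-dimensional in $x$ (for $v=\ind_{\{0\}}$, $\Theta_0(v,\cdot)$ is Lebesgue measure on the fibre $\{0\}\times\R^n$), and every $\Gamma_u$ is an $n$-dimensional Lipschitz manifold, so ``a piece of $\Gamma_u$ of dimension strictly below that of $\Gamma_u$, hence $\Theta_j$-null'' does not follow. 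What is actually true, and what the cited proof uses, is that under the hypothesis $v_1\wedge v_2\in\fconvx$ the identities $\partial(v_1\vee v_2)(x)\cup\partial(v_1\wedge v_2)(x)=\partial v_1(x)\cup\partial v_2(x)$ and $\partial(v_1\vee v_2)(x)\cap\partial(v_1\wedge v_2)(x)=\partial v_1(x)\cap\partial v_2(x)$ hold \emph{exactly} for every $x$ (there is no surplus); this is a lemma that needs its own proof. Even granting the set identities, you still cannot pass to $\hm^n(P_s(\cdot,\eta))$ by treating it as a measure in the graphs: $(x,y)\mapsto x+sy$ is injective on each individual $\Gamma_v$ but not on a union of two graphs, so one must integrate the indicator identity $\mathbf 1_{\Gamma_{v_1\vee v_2}}+\mathbf 1_{\Gamma_{v_1\wedge v_2}}=\mathbf 1_{\Gamma_{v_1}}+\mathbf 1_{\Gamma_{v_2}}$ fibrewise over $z=x+sy$; the ``locality of the Hessian measures'' you invoke instead is itself an unproven assertion.

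The continuity step has a more clear-cut error. The reduction ``one may assume $K_1\cap\operatorname{int}\dom v\neq\emptyset$, as otherwise both sides eventually vanish'' is false: for $v=\ind_{\{0\}}$ one has $\operatorname{int}\dom v=\emptyset$ yet $\int\zeta(v(x),x,y)\,\mathrm d\Theta_0(v,(x,y))=\int_{\R^n}\zeta(0,0,y)\,\mathrm d y\neq 0$ in general. Functions with lower-dimensional domain, and more generally mass of $\Theta_i(v,\cdot)$ sitting over $\partial\dom v$ where the subgradients are unbounded, are precisely the hard cases; your claim that the integrals' mass is confined to a fixed compact subset of $\operatorname{int}\dom v$ in $x$ is therefore wrong, and the compact support of $\zeta$ in $y$ is what must carry the argument there. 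Finally, the weak convergence $\Theta_j(v_k,\cdot)\to\Theta_j(v,\cdot)$ is not something one can ``read off from the definition via convergence of the sets $P_s(v_k,\cdot)$'': epi-convergence does not give convergence of these image sets for a fixed Borel $\eta$, and convergence of sets would not give convergence of their measures anyway. This weak continuity (via graphical convergence of subdifferentials / Attouch's theorem) is the technical heart of the cited result and is missing here.
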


\begin{proof}[Proof of Theorem~\ref{theorem 1}]
Since $\zeta$ has compact support with respect to the second variable, there is $r>0$ such 
that $\zeta(t,x,y)=0$ for every $y\in\R^n$ with $|y|\ge r$ and  $(t,x)\in\R\times\R^n$. 
Let  $v, v_k\in\fconvf$ be such that $v_k$ epi-converges to $v$. Since the functions are convex and finite this implies uniform convergence on compact sets, in particular on $B_r:=\{x\in\R^n\,:\, |x|\leq r\}$. Moreover, the sequence $v_k$ is uniformly bounded on $B_r$ and  uniformly Lipschitz. Hence, there exists $c>0$ such that
$$|v_k(x)|\leq c,\;|v(x)|\leq c,\; |y|\leq c$$
for all $k\in\N$,  $x\in B_r$ and  $y\in\partial v_k(x)\cup \partial v(x)$.

Next, let $\eta:\R^n\to\R$ be smooth with compact support such that $\eta(y)=1$ for all $y\in\R^n$ with $|y|\leq c$ and define $\tilde{\zeta}\in C(\R\times \R^n\times \R^n)$ by
$$\tilde{\zeta}(t,x,y)=\zeta(t,x,y)\,\eta(y).$$
The function $\tilde{\zeta}$ satisfies the conditions of Theorem~\ref{thm:hessian_measures_main_result} and $\zeta(v(x),x,y)=\tilde{\zeta}(v(x),x,y)$ for all $ x\in\R^n$, $y\in\partial v(x)$ and
$\zeta(v_k(x),x,y)=\tilde{\zeta}(v_k(x),x,y)$ for all $x\in\R^n$, $y\in\partial v_k(x)$ and $k\in\N$.
Hence, by Theorem~\ref{thm:hessian_measures_main_result},
\begin{multline*}
\int_{\R^{n}\times\R^n} \zeta(v_k(x),x,y) \d \Theta_i(v_k,(x,y))= \int_{\R^{n}\times \R^n} \tilde{\zeta}(v_k(x),x,y) \d\Theta_i(v_k,(x,y))\\
{\longrightarrow} \int_{\R^{n}\times \R^n} \tilde{\zeta}(v(x),x,y) \d\Theta_i(v,(x,y)) = \int_{\R^{n}\times \R^n} \zeta(v(x),x,y) \d\Theta_i(v,(x,y))
\end{multline*}
as $k\to\infty$. Since $v$ and $v_k$ were arbitrary this shows that \eqref{eq:z_thm_1} is well defined and continuous. Since such a function $\tilde{\zeta}$ can especially be found for any finite number of functions in $\fconvf$, this also proves the valuation property. Property (\ref{smooth}) follows from (\ref{smooth_x}).
\end{proof}

\goodbreak
As a simple consequence of Theorem  \ref{theorem 1} we obtain the following statement.

\begin{proposition}\label{if part - general case} For $\zeta\in C_c(\R^n)$, 
the functional
$\oZ\colon\fconvf\to\R$, defined by
\begin{equation}
\label{z if part}
\oZ(v)=\int_{\R^n\times \R^n}\zeta(x)\d\Theta_0(v,(x,y)),
\end{equation}
is a continuous, dually epi-translation invariant valuation which is is homo\-geneous of degree $n$. 
\end{proposition}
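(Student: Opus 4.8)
The plan is to read off everything from Theorem~\ref{theorem 1} with $i=0$, together with elementary properties of subgradients and the defining relation for $\Theta_0$ through the sets $P_s$. First I would extend $\zeta\in C_c(\R^n)$ to $\tilde\zeta\in C(\R\times\R^n\times\R^n)$ by setting $\tilde\zeta(t,x,y)=\zeta(x)$; this function has compact support with respect to the second variable, so Theorem~\ref{theorem 1} with $i=0$ immediately yields that the functional in \eqref{z if part} is well defined on $\fconvf$ and is a continuous valuation, and, by \eqref{smooth} with $i=0$ and $[\Hess v]_n=\det\Hess v$, that $\oZ(v)=\int_{\R^n}\zeta(x)\det(\Hess v(x))\d x$ for $v\in\fconvf\cap C^2(\R^n)$.

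It then remains to verify dual epi-translation invariance and $n$-homogeneity. Since $\zeta$ depends on $x$ only, I would write $\oZ(v)=\int_{\R^n}\zeta\d\mu_v$, where $\mu_v$ is the image of $\Theta_0(v,\cdot)$ under $(x,y)\mapsto x$, so that $\mu_v(B)=\Theta_0(v,B\times\R^n)$; it thus suffices to track $\Theta_0(v,B\times\R^n)$, which is the coefficient of $s^n$ in the polynomial $s\mapsto\hm^n(P_s(v,B\times\R^n))$, under the relevant operations on $v$. For $\alpha\in\R$ one has $\partial(v+\alpha)=\partial v$, hence $\Gamma_{v+\alpha}=\Gamma_v$ and $\Theta_0(v+\alpha,\cdot)=\Theta_0(v,\cdot)$, which gives vertical translation invariance. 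For linear $\ell(x)=\langle x,x_0\rangle$ one has $\partial(v+\ell)(x)=\partial v(x)+x_0$, so $P_s(v+\ell,B\times\R^n)=P_s(v,B\times\R^n)+s\,x_0$; since $\hm^n$ is translation invariant the two polynomials in $s$ coincide, hence $\Theta_0(v+\ell,B\times\R^n)=\Theta_0(v,B\times\R^n)$ and $\oZ(v+\ell)=\oZ(v)$. Together these give dual epi-translation invariance.

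For $n$-homogeneity, for $\lambda>0$ I would use $\partial(\lambda v)=\lambda\,\partial v$ to get $P_s(\lambda v,B\times\R^n)=P_{\lambda s}(v,B\times\R^n)$, so comparing the coefficients of $s^n$ gives $\Theta_0(\lambda v,B\times\R^n)=\lambda^n\Theta_0(v,B\times\R^n)$ and hence $\oZ(\lambda v)=\lambda^n\oZ(v)$; the case $\lambda=0$ (the zero function) follows since $\partial 0(x)=\{0\}$ makes $P_s(0,B\times\R^n)=B$ independent of $s$, so $\Theta_0(0,B\times\R^n)=0$ because $n\ge1$, giving $\oZ(0)=0$. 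Alternatively, $n$-homogeneity could be obtained from the $C^2$ formula above on the dense subset $\fconvf\cap C^2(\R^n)$ and extended by continuity. I do not anticipate a real obstacle here: the proposition is essentially a corollary of Theorem~\ref{theorem 1}, the only care needed being the bookkeeping with the definition of $\Theta_0$ under affine perturbation and dilation of $v$.
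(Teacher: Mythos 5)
Your proof is correct, and while it starts with the same application of Theorem~\ref{theorem 1} (with $\tilde\zeta(t,x,y)=\zeta(x)$) to get well-definedness, continuity, and the valuation property, you then take a different route for the invariance and homogeneity. The paper's proof works on the dense subset $\fconvf\cap C^2(\R^n)$, where \eqref{smooth} gives $\oZ(v)=\int_{\R^n}\zeta(x)\det(\Hess v(x))\d x$; both dual epi-translation invariance and $n$-homogeneity are then transparent (since $\Hess$ kills affine terms and $\det(\lambda\,\Hess v)=\lambda^n\det\Hess v$), and the general case is obtained by approximation and continuity. You instead work directly with the defining Steiner-type polynomial for $\Theta_0$: tracking $\Gamma_v$ and the sets $P_s(v,B\times\R^n)$ under $v\mapsto v+\alpha$, $v\mapsto v+\ell$, $v\mapsto\lambda v$, and reading off the $s^n$-coefficient. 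This bypasses the density argument entirely (apart from the one already inside Theorem~\ref{theorem 1}) and is arguably cleaner bookkeeping; it also has the small advantage of explicitly verifying degree-$n$ homogeneity including the $\lambda=0$ case, which the paper's proof leaves implicit. Both approaches are valid; you even note the paper's approach as your alternative.
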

\begin{proof}
By Theorem~\ref{theorem 1} the map defined by \eqref{z if part} is a continuous valuation on on $\fconvf$. It remains to show dually epi-translation invariance. For $v\in\fconvf\cap C^2(\R^n)$ it follows from \eqref{smooth} that
$$\oZ(v)=\int_{\R^n} \zeta(x) \ \det (\Hess v(x)) \d x$$
which is clearly invariant under the addition of constants and linear terms. The statement now easily follows for general $v\in\fconvf$ by approximation.
\end{proof}

\goodbreak
By the considerations presented in Section \ref{dual}, (\ref{Remark on Hessian measures}) and Proposition \ref{if part - general case} lead to the following result.

\begin{proposition}\label{prop 1}
For $\zeta\in C_c(\R^n)$,  the functional
$\oZ\colon\fconvs\to\R$, defined by
$$
\oZ(u)=\int_{\R^n\times \R^n}\zeta(y)\d\Theta_n(u,(x,y)),
$$ 
is a continuous and epi-translation invariant valuation on $\fconvs$ which is epi-homogeneous of degree $n$. 
\end{proposition}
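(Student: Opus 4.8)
The plan is to deduce this from Proposition~\ref{if part - general case} by dualization, following the remark that precedes the proposition. First I would let $\oZ_0\colon\fconvf\to\R$ be the functional $\oZ_0(v)=\int_{\R^n\times\R^n}\zeta(x)\d\Theta_0(v,(x,y))$, which by Proposition~\ref{if part - general case} is a continuous, dually epi-translation invariant valuation on $\fconvf$ that is homogeneous of degree $n$. By the facts recalled in Section~\ref{dual}, the dual functional $\oZ_0^*(u)=\oZ_0(u^*)$ is a continuous valuation on $\fconvs$; moreover, since $\oZ_0$ is both vertically and dually translation invariant, $\oZ_0^*$ is both vertically and translation invariant, hence epi-translation invariant, and since $\oZ_0$ is homogeneous of degree $n$, $\oZ_0^*$ is epi-homogeneous of degree $n$.

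It then remains to show that $\oZ_0^*$ coincides with the functional in the statement. For $u\in\fconvs$ we have $u^*\in\fconvf$ by \cite[Theorem 11.8]{RockafellarWets}, and $(u^*)^*=u$ since $u$ is convex and lower semicontinuous, so the conjugacy relation~\eqref{Remark on Hessian measures} applied to $v=u^*$ with $i=0$ gives $\Theta_0(u^*,\eta)=\Theta_n(u,\hat\eta)$ for every Borel set $\eta\subset\R^n\times\R^n$, where $\hat\eta=\{(x,y)\colon(y,x)\in\eta\}$. Thus $\Theta_0(u^*,\cdot)$ is the image of $\Theta_n(u,\cdot)$ under the coordinate swap $(x,y)\mapsto(y,x)$, and the change-of-variables formula yields
$$
\oZ_0^*(u)=\int_{\R^n\times\R^n}\zeta(x)\d\Theta_0(u^*,(x,y))=\int_{\R^n\times\R^n}\zeta(y)\d\Theta_n(u,(x,y)),
$$
which is exactly the functional in the proposition. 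Well-definedness is automatic, since $\oZ_0^*(u)=\oZ_0(u^*)$ is a finite real number.

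I do not expect a genuine obstacle here. The only points that need care are verifying that the hypotheses of~\eqref{Remark on Hessian measures} are met --- it is stated for functions in $\fconvf$, and $u^*$ indeed lies in $\fconvf$ --- and correctly tracking the index reflection $i\leftrightarrow n-i$ together with the swap $\eta\leftrightarrow\hat\eta$, so that the weight $\zeta$ ends up acting on the gradient variable $y$ rather than on the space variable $x$. Everything else is inherited verbatim from Proposition~\ref{if part - general case} and the dictionary of Section~\ref{dual}.
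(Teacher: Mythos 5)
Your proof is correct and follows exactly the route the paper intends: the paper states that Proposition~\ref{prop 1} follows from the duality considerations in Section~\ref{dual}, the conjugacy relation~\eqref{Remark on Hessian measures}, and Proposition~\ref{if part - general case}, and you have simply written out the computation showing $\oZ_0^*$ equals the stated functional. The change-of-variables step with the swap $\sigma(x,y)=(y,x)$, the index shift $0\leftrightarrow n$, and the transfer of continuity, epi-translation invariance, and degree-$n$ homogeneity via the dictionary of Section~\ref{dual} are all handled correctly.
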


Note, that if $\oZ$ is as in Proposition~\ref{prop 1}, then
$$
\oZ(u)=\int_{\R^n\times \R^n}\zeta(y)\d\Theta_n(u,(x,y))=\int_{\dom(u)}\zeta(\nabla u(x))\d x
$$
for every $u\in\fconvs$. See also \cite[Section 10.4]{ColesantiLudwigMussnig3}.

\section{Proof of Theorem \ref{McM sc}}\label{proof1}
For $y\in\R^n$, define the linear function $\ell_{y}\colon\R^n\to\R$ as
$$
\ell_{y}(x)=\langle x,y\rangle.
$$
For $K\in\cK^n$, the function $\ell_{y}+\ind_K$ belongs to $\fconvs$. 

\begin{claim}
The functional $\otZ_y\colon\cK^n\to\R$, defined by
$$
\otZ_y(K)=\oZ(\ell_{y}+\ind_K),
$$ is a continuous and translation invariant valuation. 
\end{claim}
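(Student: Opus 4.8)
The plan is to verify the three required properties of $\otZ_y$ directly, transferring them from the corresponding properties of $\oZ$ on $\fconvs$. First I would check that $\otZ_y$ is well defined: for $K\in\cK^n$ the function $\ell_y+\ind_K$ is convex, lower semicontinuous, not identically $+\infty$, and since $K$ is bounded the quotient $(\ell_y(x)+\ind_K(x))/\abs x$ is $+\infty$ for $\abs x$ large (the function is $+\infty$ outside $K$), so $\ell_y+\ind_K\in\fconvs$ and $\oZ(\ell_y+\ind_K)$ makes sense.

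\textbf{Valuation property.} Given $K,L\in\cK^n$ with $K\cup L\in\cK^n$, I would use the identities $\ind_{K\cup L}=\ind_K\wedge\ind_L$ and $\ind_{K\cap L}=\ind_K\vee\ind_L$, which give
$$
(\ell_y+\ind_K)\wedge(\ell_y+\ind_L)=\ell_y+\ind_{K\cup L},\qquad
(\ell_y+\ind_K)\vee(\ell_y+\ind_L)=\ell_y+\ind_{K\cap L}.
$$
All four functions lie in $\fconvs$ (using that $K\cup L\in\cK^n$ for the first one), so applying the valuation property \eqref{val_def} of $\oZ$ to $u=\ell_y+\ind_K$ and $v=\ell_y+\ind_L$ yields
$$
\otZ_y(K\cup L)+\otZ_y(K\cap L)=\oZ(\ell_y+\ind_{K\cup L})+\oZ(\ell_y+\ind_{K\cap L})
=\oZ(\ell_y+\ind_K)+\oZ(\ell_y+\ind_L)=\otZ_y(K)+\otZ_y(L).
$$

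\textbf{Translation invariance.} For a translation $\tau(x)=x+x_0$ I would compute $\ind_{\tau K}(x)=\ind_K(x-x_0)$ and hence
$$
(\ell_y+\ind_{\tau K})(x)=\langle x,y\rangle+\ind_K(x-x_0)=\langle x-x_0,y\rangle+\langle x_0,y\rangle+\ind_K(x-x_0)
=\big((\ell_y+\ind_K)\circ\tau^{-1}\big)(x)+\langle x_0,y\rangle.
$$
So $\ell_y+\ind_{\tau K}$ is obtained from $\ell_y+\ind_K$ by a horizontal translation followed by adding the constant $\langle x_0,y\rangle$; since $\oZ$ is epi-translation invariant (both translation invariant and vertically translation invariant), $\otZ_y(\tau K)=\oZ(\ell_y+\ind_{\tau K})=\oZ(\ell_y+\ind_K)=\otZ_y(K)$.

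\textbf{Continuity.} If $K_k\to K$ in the Hausdorff metric, then $\ell_y+\ind_{K_k}\to\ell_y+\ind_K$ in $\fconv$ in the sense of epi-convergence; this follows from Lemma~\ref{hd conv lvl sets}, since for $t\in\R$ the sublevel set $\{\ell_y+\ind_{K_k}\le t\}$ is the (convex, compact) set $\{x\in K_k:\langle x,y\rangle\le t\}$, and Hausdorff convergence $K_k\to K$ together with the linear inequality gives convergence of these truncated sets to $\{x\in K:\langle x,y\rangle\le t\}$ for all $t$ different from the minimum of $\ell_y+\ind_K$ over $\R^n$ (a short argument handling the case $K=\{x_0\}$ or $y=0$ separately, and otherwise using that the "bad" level set is a face, which is negligible in the hypothesis of the lemma). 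Since all functions involved lie in $\fconvs\subset\fconv$ and $\oZ$ is continuous, $\otZ_y(K_k)\to\otZ_y(K)$.

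The main obstacle I anticipate is the continuity step: one must be careful that Hausdorff convergence of the bodies actually produces epi-convergence of the functions $\ell_y+\ind_{K_k}$, in particular checking the sublevel-set convergence at all but one value of $t$ and correctly invoking Lemma~\ref{hd conv lvl sets}. The valuation and invariance properties are routine once the lattice identities for indicatrices and the decomposition of $\ell_y+\ind_{\tau K}$ into a horizontal plus vertical translate are written out.
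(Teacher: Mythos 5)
Your proposal is correct and follows essentially the same route as the paper's proof: the valuation property via the lattice identities for indicatrices, translation invariance by decomposing $\ell_y+\ind_{\tau K}$ into a horizontal translate plus the additive constant $\langle x_0,y\rangle$, and continuity via Lemma~\ref{hd conv lvl sets}. The only difference is that you spell out the sublevel-set convergence in somewhat more detail (including the excluded level $t=\min$), while the paper simply cites the lemma; both are fine.
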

\begin{proof}
\medskip

\noindent{\em i) The valuation property.} Let $K,L\in\cK^n$ be such that $K\cup L\in\cK^n$. Note that 
$$
(\ell_{y}+\ind_K)\vee (\ell_{y}+\ind_L)=\ell_{y}+\ind_{K\cap L};\quad
(\ell_{y}+\ind_K)\wedge (\ell_{y}+\ind_L)=\ell_{y}+\ind_{K\cup L}.
$$
Hence the valuation property of $\oZ$ implies that $\otZ_y$ is a valuation.

\medskip

\noindent{\em ii) Translation invariance.} Let $x_0\in\R^n$. For every $x\in\R^n$ we have
\begin{eqnarray*}
\ell_{y}(x)+\ind_{K+x_0}(x)&=&\langle x,y\rangle+\ind_K(x-x_0)\\
&=&\langle x-x_0,y\rangle+\ind_K(x-x_0)+\langle x_0,y\rangle\\
&=&\ell_{y}(x-x_0)+\ind_K(x-x_0)+\langle x_0,y\rangle.
\end{eqnarray*}
In other words, the functions $\ell_{y}+\ind_{K+x_0}$ and $\ell_{y}+\ind_K$ differ only by a translation of the variable and by 
an additive constant. Using the epi-translation invariance of $\oZ$ we get
$$
\otZ_y(K+x_0)=\oZ(\ell_{y}+\ind_{K+x_0})=\oZ(\ell_{y}+\ind_{K})=\otZ_y(K).
$$

\medskip
\noindent{\em iii) Continuity.} 
By Lemma~\ref{hd conv lvl sets}, a sequence of convex bodies $K_i$ converges to $K$ if and only if $\ell_{y}+\ind_{K_i}$ epi-converges to $\ell_{y}+\ind_{K}$.
Hence the continuity of $\oZ$ implies that of $\otZ_y$. 
\end{proof}

Let $y\in\R^n$ be fixed. By the previous claim and Theorem~\ref{McM}, 
there exist continuous and translation invariant valuations $\otZ_{y,0}, \dots, \otZ_{y,n}$ on $\cK^n$ such that $\otZ_{y,j}$ is $j$-homogeneous and
\begin{equation*}
\otZ_y=\sum_{j=0}^n\otZ_{y,j}.
\end{equation*}
Let $K\in\cK^n$. For $\lambda\geq 0$, we have $
\lambda \sq (\ell_{y}+\ind_K)=\ell_{y}+\ind_{\lambda K}$.
Therefore we obtain, for every $\lambda\geq 0$,
\begin{equation*}
\oZ(\lambda \sq (\ell_{y}+\ind_K))=\sum_{j=0}^n\otZ_{y,j}(K)\lambda^j.
\end{equation*}

We consider the system of equations,
\begin{equation}\label{system}
\oZ(k \sq (\ell_{y}+\ind_K))=\sum_{j=0}^n\otZ_{y,j}(K)k^j,\quad k=0,1,\dots,n.
\end{equation}
Its associated matrix is a Vandermonde matrix and invertible. Hence there are coefficients $\alpha_{ij}$ for $i,j=0,\dots n$, such that
$$
\otZ_{y, i}(K)=\sum_{j=0}^n \alpha_{ij}\oZ(k\sq (\ell_{y}+\ind_K)),\quad\, i=0,\dots, n.
$$
Note that the coefficients $\alpha_{ij}$ are independent of $y$ and $K$.

For $i=0,\dots,n$, we define $\oZ_i\colon\fconvs\to\R$ as
$$
\oZ_i(u)=\sum_{j=0}^n \alpha_{ij}\oZ(j \sq u).
$$ 
In general, if $\oZ$ is a continuous, epi-translation invariant valuation on $\fconvs$ and $\lambda\geq 0$, then 
the functional $u\mapsto\oZ(\lambda \sq u)$ is a continuous, epi-translation valuation as well. Hence $\oZ_i$ is a continuous, epi-translation invariant valuation on $\fconvs$, for every
$i=0,\dots,n$. 

By \eqref{system} and the definition of $\oZ_i$, for every $y\in\R^n$ and $K\in\cK^n$ we may write
$$
\oZ_i(\ell_{y}+\ind_K)=\otZ_{y,i}(K).
$$
Therefore
$$
\oZ(\ell_{y}+\ind_K)=\sum_{i=0}^n\oZ_i(\ell_{y}+\ind_K).
$$
Moreover, by the homogeneity of the $\oZ_{y,i}$ we have, for $\lambda\geq 0$,
$$
\oZ_i(\lambda \sq (\ell_{y}+\ind_K))=
\otZ_{y, i}(\lambda K)=\lambda^i \otZ_{y,i}(K)=\lambda^i\oZ_i(\ell_{y}+\ind_K).
$$
As a conclusion, we have the following statement: there exist continuous and epi-translation invariant valuations $\oZ_0,\dots,\oZ_n$ on $\fconvs$ such that, for every $y\in\R^n$ and for every $K\in\cK^n$, setting $u=\ell_{y}+\ind_K$, we have
$$
\oZ(u)=\sum_{i=0}^n\oZ_i(u),
$$
and, for every $\lambda\geq 0$,
$$
\oZ_i(\lambda \sq u)=\lambda^i\oZ_i(u).
$$
The same statement holds if we replace $u=\ell_{y}+\ind_K$ by $u=\ell_{y}+\ind_K+\alpha$, for any constant $\alpha\in\R$
as all valuations involved are vertically translation invariant.

If we apply Lemma \ref{null lemma} to 
$$
\oZ-\sum_{i=0}^n\oZ_i,
$$
we get that this valuation vanishes on $\fconvs$, so that
$$
\oZ(u)=\sum_{i=0}^n\oZ_i(u)
$$
for every $u\in\fconvs$.
For $\lambda\geq 0$, the same lemma applied to the valuation on $\fconvs$ defined by
$$
u\mapsto \oZ_i(\lambda \sq u)-\lambda^i \oZ_i(u),
$$
shows that this must be identically zero as well, that is, $\oZ_i$ is epi-homogeneous of degree $i$. The proof is complete.

\section{Polynomiality}\label{section polynomiality}

In this section we establish the polynomial behavior of continuous and epi-translation invariant valuations on $\fconvs$. This corresponds to the polynomiality of translation invariant valuations on convex bodies stated by Hadwiger and proved by McMullen \cite{McMullen77}. We start by recalling the definition of inf-convolution (see, for example,  \cite{RockafellarWets,Schneider:CB2}). For
$u,v\in\fconvx$, we define the function $u\infconv v\colon\R^n\to [-\infty,+\infty]$ by
$$
u\infconv v(z)=\inf\{u(x)+v(y)\colon x,y\in\R^n,\, x+y=z\}
$$
for $z\in\R^n$. This operation can be extended to more than two functions with corresponding coefficients. 
The inf-convolution has a straightforward geometric meaning: the epigraph of $u\infconv v$ is the Minkowski sum of the epigraphs
of $u$ and $v$.  

By \cite[Section 1.6]{Schneider:CB2}, for every $\alpha,\beta>0$ and for every $u,v\in\fconvx$, we have\linebreak$\alpha\sq u\infconv\beta\sq u\in\fconvx$, if this function does not attain $-\infty$. Moreover, in this case we have the following relation (see for instance \cite[Proposition 2.1]{ColesantiFragala}):
\begin{equation*}
(\alpha\sq u\infconv\beta\sq v)^*=(\alpha u^*+\beta v^*).
\end{equation*}
This shows in particular that if $u,v\in\fconvs$ then $\alpha\sq u\infconv\beta\sq v\in\fconvs$. Indeed, in this case $u^*$ and $v^*$ belong to $\fconvf$
and so does their usual sum. Consequently, its conjugate belongs to $\fconvs$. We say that $\oZ$ is \emph{epi-additive} if 
$$\oZ(\alpha\sq u\infconv\beta\sq v)=\alpha \oZ(u)+\beta\oZ(v)$$
for all $\alpha,\beta>0$ and $u,v\in\fconvs$.

Let $\oZ\colon\fconvs\to\R$ be a continuous, epi-translation invariant valuation that is 
epi-homogeneous of degree $m\in\{1,\ldots,n\}$. For $u_1\in\fconvs$, we consider the functional $\oZ_{u_1}\colon\fconvs\to\R$ defined by
$$
\oZ_{u_1}(u)=\oZ(u \infconv u_1).
$$
The functional $\oZ_{u_1}$ is a continuous and epi-translation invariant valuation on $\fconvs$. Indeed, the valuation property,  continuity and vertical translation invariance follow immediately from the corresponding properties of $\oZ$. As for translation invariance, let $x_0\in\R^n$ and $\tau:\R^n\to\R^n$ be the translation by $x_0$, that is, $\tau(x)=x+x_0$. We have
\begin{eqnarray*}
(u\circ \tau^{-1}) \infconv u_1 &=&\left((u\circ \tau^{-1})^* +u_1^* \right)^*=
\left(u^*+\langle \cdot,x_0\rangle + u_1^*\right)^*=(u\infconv u_1)\circ\tau^{-1}.
\end{eqnarray*}
Hence  the epi-translation invariance of $\oZ_{u_1}$ follows from the epi-translation invariance of $\oZ$. Therefore, we may apply Theorem~\ref{McM sc} to obtain a polynomial expansion
$$\oZ((\lambda \sq u) \infconv  u_1) = \oZ_{u_1}(\lambda \sq u ) = \sum_{i=0}^n \lambda^i \oZ_{u_1,i}(u)$$
for $\lambda\geq 0$ and $u\in\fconvs$, where the functionals $\oZ_{u_1,i}$ are continuous, epi-translation invariant valuations on $\fconvs$ that are epi-homogeneous of degree $i\in\{0,\ldots,n\}$.

Similarly, for fixed $\bar{u}\in\fconvs$ one can show that $v\mapsto \oZ_{v,i}(\bar{u})$ defines a continuous and epi-translation invariant valuation on $\fconvs$. Hence, as in  the proof of Theorem 6.3.4 in \cite{Schneider:CB2},  we may repeat this argument to obtain the following statement.

\goodbreak
\begin{theorem}\label{thm poly sc} Let $\,\oZ\colon\fconvs\to\R$ be a continuous and epi-translation invariant valuation  that is 
epi-homogeneous of degree $m\in\{1,\ldots,n\}$. There exists a symmetric function $\obZ:(\fconvs)^m\to\R$ such that for $k\in\N$, $u_1,\ldots,u_k\in\fconvs$ and $\lambda_1,\ldots,\lambda_k\geq 0$,
$$
\oZ(\lambda_1\sq u_1\infconv\cdots \infconv\lambda_k\sq u_k) = \sum_{\substack{i_1,\ldots, i_k\in \{0,\ldots,m\}\\i_1+\cdots+ i_k = m}} \binom{m}{i_1 \cdots i_k} \lambda_1^{i_1} \cdots \lambda_k^{i_k} \obZ(u_1 [i_1],\ldots,u_k [i_k]),
$$
where $u_j[i_j]$ means that the argument $u_j$ is repeated $i_j$ times.
Moreover, the function $\obZ$ is epi-additive in each variable. For $i\in\{1,\ldots,m\}$ and $u_{i+1}, \dots, u_m\in\fconvs$, the map $u\mapsto\obZ (u[i],u_{i+1},\ldots,u_m)$ is a continuous, epi-translation invariant valuation on $\fconvs$ that is epi-homogeneous of degree $i$.
\end{theorem}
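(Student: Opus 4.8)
The strategy is to mimic the classical construction of mixed volumes from McMullen's decomposition, as carried out in the proof of Theorem~6.3.4 of \cite{Schneider:CB2}, working throughout with $\infconv$ and $\sq$ in place of Minkowski addition and dilation. The base case is already in hand: for a single function $u_1\in\fconvs$, applying Theorem~\ref{McM sc} to the valuation $\oZ_{u_1}$ and using that $\oZ$ is epi-homogeneous of degree $m$, one extracts continuous, epi-translation invariant valuations $\oZ_{u_1,i}$ with $\oZ_{u_1,i}$ epi-homogeneous of degree $i$, where only the terms with $i\le m$ can be nonzero because $\oZ((\lambda\sq u)\infconv u_1)$ has degree at most $m$ in $\lambda$ once one also expands in a dilation parameter of $u_1$ (a Vandermonde argument in two parameters, exactly as in the proof of Theorem~\ref{McM sc}, forces the total degree to be $m$). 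Concretely I would set up the two-parameter polynomial $(\lambda,\mu)\mapsto \oZ(\lambda\sq u\infconv\mu\sq v)$, which by two successive applications of Theorem~\ref{McM sc} (first in $\lambda$, then, for each coefficient, in $\mu$, using that $v\mapsto\oZ_{v,i}(\bar u)$ is again a valuation of the required type) is a polynomial $\sum_{i+j\le n}c_{ij}(u,v)\lambda^i\mu^j$; epi-homogeneity of degree $m$ of $\oZ$ applied to the substitution $\lambda\mapsto t\lambda$, $\mu\mapsto t\mu$ then kills every coefficient with $i+j\neq m$.

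Next I would iterate. By induction on $k$, expanding one new function at a time and invoking Theorem~\ref{McM sc} together with the observation (already recorded in the text) that fixing all but one argument yields a continuous, epi-translation invariant valuation, one obtains a multi-parameter polynomial expansion
$$
\oZ(\lambda_1\sq u_1\infconv\cdots\infconv\lambda_k\sq u_k)=\sum_{i_1+\cdots+i_k\le n}c_{i_1\cdots i_k}(u_1,\ldots,u_k)\,\lambda_1^{i_1}\cdots\lambda_k^{i_k},
$$
and epi-homogeneity of degree $m$ again forces all surviving terms to have $i_1+\cdots+i_k=m$. The coefficient of the ``diagonal'' term, suitably normalized by the multinomial coefficient $\binom{m}{i_1\cdots i_k}$, is what will be called $\obZ(u_1[i_1],\ldots,u_k[i_k])$; one must check that this is consistent, i.e.\ that the coefficient depends only on the multiset of arguments with multiplicities and not on how it arose in a longer list, which follows by comparing expansions for different $k$ and using uniqueness of polynomial coefficients. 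Symmetry of $\obZ$ in its $m$ slots is immediate from the commutativity of $\infconv$. Epi-additivity in each variable: splitting $u_j=\alpha\sq u'\infconv\beta\sq u''$ and expanding, the coefficient of $\lambda_1^{i_1}\cdots\lambda_k^{i_k}$ with $i_j=1$ is, by the already-established case, linear in the sense $\obZ(\ldots,\alpha\sq u'\infconv\beta\sq u'',\ldots)=\alpha\,\obZ(\ldots,u',\ldots)+\beta\,\obZ(\ldots,u'',\ldots)$; the general-multiplicity case reduces to this by introducing an auxiliary parameter so that the distinguished slot carries exponent one. Finally, that $u\mapsto\obZ(u[i],u_{i+1},\ldots,u_m)$ is a continuous, epi-translation invariant valuation epi-homogeneous of degree $i$ is read off from its description as the coefficient of $\lambda^i$ in $\oZ(\lambda\sq u\infconv u_{i+1}\infconv\cdots\infconv u_m)$, divided by a constant: each of these properties is inherited from $\oZ$ through the polynomial-coefficient extraction, continuity because the coefficients of a polynomially-varying continuous family vary continuously (a Vandermonde inversion, as in the proof of Theorem~\ref{McM sc}), and epi-homogeneity of degree $i$ by the scaling substitution.

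\textbf{Main obstacle.} The routine steps are the multi-parameter polynomiality and the scaling arguments. The genuine point requiring care is the \emph{well-definedness and consistency} of $\obZ$ as a function on $(\fconvs)^m$: one is defining it via coefficients of expansions indexed by arbitrary $k\ge m$, and one needs that the coefficient attached to a given multiset $\{u_1[i_1],\ldots,u_k[i_k]\}$ is the same whether computed from that $k$-term expansion or from a shorter one obtained by, say, not separating out a repeated argument or by appending a dummy argument with exponent zero. This is handled by the uniqueness of coefficients of a polynomial in several variables together with the compatibility relation obtained by setting one $\lambda_j$ equal to another (which merges two slots) or to a fixed constant; but writing this bookkeeping cleanly — and checking that epi-additivity is compatible with the symmetrization — is where the real work lies, and it is precisely the part that is done in detail in \cite[Section 6.3]{Schneider:CB2} for convex bodies and which here goes through verbatim once $\infconv$ and $\sq$ are substituted, using that $(\fconvs,\infconv,\sq)$ satisfies the same formal axioms (commutativity, associativity, $\lambda\sq(\mu\sq u)=(\lambda\mu)\sq u$, $\lambda\sq(u\infconv v)=\lambda\sq u\infconv\lambda\sq v$) as $(\cK^n,+,\cdot)$.
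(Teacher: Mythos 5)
Your proposal follows essentially the same route as the paper: you show that $u\mapsto\oZ(u\infconv u_1)$ and $v\mapsto\oZ_{v,i}(\bar u)$ are continuous, epi-translation invariant valuations, apply Theorem~\ref{McM sc} to each, and then iterate exactly as in the proof of Theorem~6.3.4 of \cite{Schneider:CB2}, using that $(\fconvs,\infconv,\sq)$ obeys the same formal algebra as $(\cK^n,+,\cdot)$. The additional details you supply — the two-parameter scaling argument to kill terms of total degree $\ne m$, and the consistency/symmetrization bookkeeping — are precisely the content of Schneider's argument that the paper delegates to that reference, so there is no real divergence in method.
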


The special case $m=1$ in the previous result leads to the following result.

\begin{corollary}\label{add} 
If $\,\oZ\colon\fconvs\to\R$ is a continuous and epi-translation invariant valuation that is epi-homogeneous of degree 1, then  $\oZ$ is epi-additive.
\end{corollary}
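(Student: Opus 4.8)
The plan is to obtain the statement as the special case $m=1$ of Theorem~\ref{thm poly sc}. Applying that theorem to $\oZ$ (which is epi-homogeneous of degree $m=1$) produces a symmetric function $\obZ\colon\fconvs\to\R$ such that, for all $k\in\N$, $u_1,\dots,u_k\in\fconvs$ and $\lambda_1,\dots,\lambda_k\geq 0$,
$$
\oZ(\lambda_1\sq u_1\infconv\cdots\infconv\lambda_k\sq u_k)=\sum_{\substack{i_1,\dots,i_k\in\{0,1\}\\i_1+\cdots+i_k=1}}\lambda_1^{i_1}\cdots\lambda_k^{i_k}\,\obZ(u_1[i_1],\dots,u_k[i_k]),
$$
where I have used that $\binom{1}{i_1\cdots i_k}=1$ for every admissible index vector.

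First I would identify $\obZ$ with $\oZ$ itself. Taking $k=1$ and $\lambda_1=1$ in the displayed formula, the only surviving term on the right has $i_1=1$, so the identity reads $\oZ(1\sq u)=\obZ(u)$; since $1\sq u=u$ for every $u\in\fconvs$, this gives $\obZ=\oZ$.

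Next I would specialize to $k=2$. The index vectors $(i_1,i_2)\in\{0,1\}^2$ with $i_1+i_2=1$ are exactly $(1,0)$ and $(0,1)$, so the expansion collapses to
$$
\oZ(\lambda_1\sq u_1\infconv\lambda_2\sq u_2)=\lambda_1\,\obZ(u_1)+\lambda_2\,\obZ(u_2)=\lambda_1\,\oZ(u_1)+\lambda_2\,\oZ(u_2)
$$
for all $u_1,u_2\in\fconvs$ and $\lambda_1,\lambda_2\geq 0$. Reading this with $\lambda_1=\alpha$, $\lambda_2=\beta$, $\alpha,\beta>0$, $u_1=u$, $u_2=v$ is precisely the definition of epi-additivity, so $\oZ$ is epi-additive.

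There is essentially no obstacle: the entire content is already packaged in Theorem~\ref{thm poly sc}, and the only routine points are the normalization of the multinomial coefficients in degree one, the identification $\obZ=\oZ$, and the observation that the constraint $i_1+i_2=1$ leaves only the two advertised terms. Alternatively, one could bypass Theorem~\ref{thm poly sc} and argue directly: two successive applications of Theorem~\ref{McM sc}, first to $v\mapsto\oZ(u\infconv v)$ and then to the resulting homogeneous components viewed as valuations in $u$, yield a double polynomial expansion of $\oZ(\lambda\sq u\infconv\mu\sq v)$ in $(\lambda,\mu)$; since $(\lambda,\mu)\mapsto\oZ(\lambda\sq u\infconv\mu\sq v)$ is jointly epi-homogeneous of degree $1$ (because $t\sq(\lambda\sq u\infconv\mu\sq v)=(t\lambda)\sq u\infconv(t\mu)\sq v$), all but the two linear terms must vanish, and those terms are evaluated by setting $\mu=0$ and $\lambda=0$ respectively.
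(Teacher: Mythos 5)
Your proof is correct and follows exactly the paper's intended route: the paper states only that Corollary~\ref{add} is the case $m=1$ of Theorem~\ref{thm poly sc}, and you have spelled out the needed bookkeeping (multinomial coefficients equal to $1$, identification of $\obZ$ with $\oZ$ via $k=1$, and the $k=2$ specialization yielding $\oZ(\lambda_1\sq u_1\infconv\lambda_2\sq u_2)=\lambda_1\oZ(u_1)+\lambda_2\oZ(u_2)$). The ``alternative'' you sketch at the end is not genuinely different -- it is essentially the internal argument the paper uses to establish Theorem~\ref{thm poly sc} itself.
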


\goodbreak
Finally, we also obtain the dual statements. We say that a functional $\oZ: \fconvf \to \R$ is \emph{additive} if $\oZ(\alpha\,v+\beta\,w)= \alpha \oZ(v)+ \beta\oZ(w)$ for all $\alpha, \beta \ge 0$ and $v,w\in\fconvf$.

\begin{theorem}\label{thm poly f} Let $\,\oZ\colon\fconvf\to\R$ be a continuous, dually epi-translation invariant valuation  that is 
homogeneous of degree $m\in\{1,\ldots,n\}$. There exists a symmetric function $\obZ:(\fconvf)^m\to\R$ such that for $k\in\N$, $v_1,\ldots,v_k\in\fconvf$ and $\lambda_1,\ldots,\lambda_k\geq 0$,
$$
\oZ(\lambda_1\, v_1+\cdots +\lambda_k\, v_k) = \sum_{\substack{i_1,\ldots, i_k\in \{0,\ldots,m\}\\i_1+\cdots+ i_k = m}} \binom{m}{i_1 \cdots i_k} \lambda_1^{i_1} \cdots \lambda_k^{i_k} \obZ(v_1 [i_1],\ldots,v_k [i_k]).
$$
Moreover, the function $\obZ$ is additive in each variable. For $i\in\{1,\ldots,m\}$ and $v_{i+1}, \dots, v_m\in\fconvf$,  the map $v\mapsto\obZ (v[i],v_{i+1},\ldots,v_m)$ is a continuous and dually epi-translation invariant valuation on $\fconvf$ that is homogeneous of degree $i$.
\end{theorem}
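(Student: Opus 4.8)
The plan is to deduce Theorem~\ref{thm poly f} from its super-coercive counterpart, Theorem~\ref{thm poly sc}, via the Legendre transform, using the dictionary set up in Section~\ref{dual}. Given a valuation $\oZ$ as in the statement, consider the dual functional $\oZ^*\colon\fconvs\to\R$, $\oZ^*(u)=\oZ(u^*)$. By the result of \cite{ColesantiLudwigMussnig3} recalled in Section~\ref{dual}, $\oZ^*$ is a continuous valuation on $\fconvs$; since $\oZ$ is dually epi-translation invariant, $\oZ^*$ is epi-translation invariant, and since $\oZ$ is homogeneous of degree $m$, $\oZ^*$ is epi-homogeneous of degree $m$. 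Hence Theorem~\ref{thm poly sc} applies to $\oZ^*$ and yields a symmetric function $\mathrm{W}\colon(\fconvs)^m\to\R$, epi-additive in each variable, such that for all $k\in\N$, $u_1,\dots,u_k\in\fconvs$ and $\lambda_1,\dots,\lambda_k\ge 0$,
$$
\oZ^*(\lambda_1\sq u_1\infconv\cdots\infconv\lambda_k\sq u_k)=\sum_{\substack{i_1,\dots,i_k\in\{0,\dots,m\}\\ i_1+\cdots+i_k=m}}\binom{m}{i_1\cdots i_k}\lambda_1^{i_1}\cdots\lambda_k^{i_k}\,\mathrm{W}(u_1[i_1],\dots,u_k[i_k]),
$$
and such that the diagonal maps $u\mapsto\mathrm{W}(u[i],u_{i+1},\dots,u_m)$ are continuous, epi-translation invariant valuations on $\fconvs$ that are epi-homogeneous of degree $i$.

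Next I would define $\obZ\colon(\fconvf)^m\to\R$ by $\obZ(v_1,\dots,v_m)=\mathrm{W}(v_1^*,\dots,v_m^*)$; symmetry of $\obZ$ is inherited from that of $\mathrm{W}$, as conjugation is a bijection between $\fconvf$ and $\fconvs$. To transport the polynomial identity I would use the relation $(\alpha\sq u\infconv\beta\sq w)^*=\alpha u^*+\beta w^*$ from Section~\ref{section polynomiality}; iterating it gives
$$
(\lambda_1\sq u_1\infconv\cdots\infconv\lambda_k\sq u_k)^*=\lambda_1 u_1^*+\cdots+\lambda_k u_k^*
$$
for $u_j\in\fconvs$ and $\lambda_j\ge 0$, where the cases $\lambda_j=0$ are covered by the conventions $0\sq u=\ind_{\{0\}}$ on the super-coercive side and $0\cdot v=0$ on the finite-valued side, which are conjugate to one another. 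Applying this with $u_j=v_j^*$, so that $u_j^*=v_j$ because functions in $\fconvs$ are closed and convex, together with $\oZ(w)=\oZ^*(w^*)$ for $w\in\fconvf$, I obtain
$$
\oZ(\lambda_1 v_1+\cdots+\lambda_k v_k)=\oZ^*\bigl(\lambda_1\sq v_1^*\infconv\cdots\infconv\lambda_k\sq v_k^*\bigr),
$$
and the displayed expansion for $\oZ^*$ then gives precisely the asserted identity once $\mathrm{W}(v_1^*[i_1],\dots,v_k^*[i_k])$ is rewritten as $\obZ(v_1[i_1],\dots,v_k[i_k])$; the multinomial coefficients and the index constraint are unchanged.

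It remains to verify the last two assertions along the same dictionary. Additivity of $\obZ$ in each variable follows from epi-additivity of $\mathrm{W}$ together with the identity $(\alpha v+\beta w)^*=\alpha\sq v^*\infconv\beta\sq w^*$, a special case of the formula above (apply it with $u=v^*$, $w=w^*$ and take conjugates, using $v^{**}=v$, $w^{**}=w$). For the diagonal maps, fix $i\in\{1,\dots,m\}$ and $v_{i+1},\dots,v_m\in\fconvf$; then $v\mapsto\obZ(v[i],v_{i+1},\dots,v_m)$ is the composition of conjugation $\fconvf\to\fconvs$ with $u\mapsto\mathrm{W}(u[i],v_{i+1}^*,\dots,v_m^*)$, and the latter is, by Theorem~\ref{thm poly sc}, a continuous, epi-translation invariant valuation on $\fconvs$ that is epi-homogeneous of degree $i$. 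By the facts of Section~\ref{dual} (continuity and the valuation property are preserved under precomposition with conjugation, epi-translation invariance corresponds to dual epi-translation invariance, and epi-homogeneity of degree $i$ corresponds to homogeneity of degree $i$), this composition is a continuous, dually epi-translation invariant valuation on $\fconvf$ that is homogeneous of degree $i$, completing the argument. I do not expect a genuine obstacle here: the entire proof is bookkeeping under the Legendre transform, the one point warranting a little care being the passage from the two-function conjugation identity to its $k$-function version and the handling of vanishing coefficients.
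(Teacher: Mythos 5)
Your proposal is correct and follows exactly the route the paper intends: the authors state Theorem~\ref{thm poly f} simply as the dual statement of Theorem~\ref{thm poly sc}, obtained by passing to $\oZ^*$ and transporting the multilinear expansion through the conjugation dictionary of Section~\ref{dual} and the identity $(\alpha\sq u\infconv\beta\sq v)^*=\alpha u^*+\beta v^*$. You have filled in the bookkeeping (the $k$-fold version of the conjugation identity, the $\lambda_j=0$ convention $(0\sq u)^*=0\cdot v$, and the transfer of additivity and of the properties of the diagonal maps) correctly and in the spirit of the paper's remark ``Finally, we also obtain the dual statements.''
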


\goodbreak
The special case $m=1$ in the previous result leads to the following result.

\begin{corollary}
If $\,\oZ\colon\fconvf\to\R$ is a continuous and dually epi-translation invariant valuation that is homogeneous of degree 1, then  $\oZ$ is additive.
\end{corollary}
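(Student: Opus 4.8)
The plan is to obtain the statement as the case $m=1$ of Theorem~\ref{thm poly f}, which has already done all the work. Specializing that theorem to $m=1$, for every $k\in\N$, all $v_1,\dots,v_k\in\fconvf$ and all $\lambda_1,\dots,\lambda_k\geq 0$ one has
$$
\oZ(\lambda_1 v_1+\cdots+\lambda_k v_k)=\sum_{\substack{i_1,\dots,i_k\in\{0,1\}\\ i_1+\cdots+i_k=1}}\binom{1}{i_1\,\cdots\, i_k}\,\lambda_1^{i_1}\cdots\lambda_k^{i_k}\,\obZ(v_1[i_1],\dots,v_k[i_k]).
$$
I would note that the only multi-indices satisfying the constraint are those in which exactly one $i_j$ equals $1$ and the remaining ones vanish, so the right-hand side collapses to $\sum_{j=1}^{k}\lambda_j\,\obZ(v_j)$; taking $k=1$ and $\lambda_1=1$ then identifies $\obZ(v)=\oZ(v)$. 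Consequently $\oZ(\lambda_1 v_1+\cdots+\lambda_k v_k)=\sum_{j=1}^{k}\lambda_j\oZ(v_j)$, and the special case $k=2$, $v_1=v$, $v_2=w$ reads $\oZ(\alpha v+\beta w)=\alpha\oZ(v)+\beta\oZ(w)$ for all $\alpha,\beta\geq 0$ and $v,w\in\fconvf$, which is precisely additivity.

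Alternatively --- and this is the route that makes the parallel with Corollary~\ref{add} transparent --- I would argue by duality. If $\oZ$ is as in the statement, then $\oZ^*\colon\fconvs\to\R$, $\oZ^*(u)=\oZ(u^*)$, is a continuous valuation by \cite{ColesantiLudwigMussnig3}; it is epi-translation invariant because $\oZ$ is dually epi-translation invariant, and it is epi-homogeneous of degree $1$ because $\oZ$ is homogeneous of degree $1$ (both equivalences are recorded in Section~\ref{dual}). Corollary~\ref{add} then gives that $\oZ^*$ is epi-additive. For $v,w\in\fconvf$ and $\alpha,\beta>0$, I would put $u=v^*$ and $\tilde u=w^*$, which lie in $\fconvs$, and use the identity $(\alpha\sq u\infconv\beta\sq\tilde u)^*=\alpha u^*+\beta\tilde u^*$ together with $v^{**}=v$ and $w^{**}=w$ to get
$$
\oZ(\alpha v+\beta w)=\oZ^*(\alpha\sq v^*\infconv\beta\sq w^*)=\alpha\oZ^*(v^*)+\beta\oZ^*(w^*)=\alpha\oZ(v)+\beta\oZ(w);
$$
the boundary cases $\alpha=0$ or $\beta=0$ follow from the degree-$1$ homogeneity of $\oZ$.

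I expect no real obstacle: the substantive content is already in Theorem~\ref{thm poly f} (equivalently, via duality, in Corollary~\ref{add}), and this corollary is merely the bookkeeping observation that in degree $1$ the multilinear expansion has no genuine cross terms. The one point I would state carefully is the identification $\obZ=\oZ$ on the diagonal, which is immediate from the symmetry and homogeneity assertions of Theorem~\ref{thm poly f}.
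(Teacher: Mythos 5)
Your proposal is correct and matches the paper's approach: the paper states this corollary as the immediate specialization of Theorem~\ref{thm poly f} to $m=1$, which is exactly your first argument, and your alternative duality route is precisely the mechanism by which the paper derives the $\fconvf$ statements from their $\fconvs$ counterparts anyway. The identification $\obZ=\oZ$ on the diagonal and the treatment of the boundary cases $\alpha=0$ or $\beta=0$ via $1$-homogeneity are both handled correctly.
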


Let $\zeta\in C_c(\R^n)$. By Proposition~\ref{if part - general case}, the functional
$$\oZ(v)=\int_{\R^n\times \R^n} \zeta(x) \d \Theta_0(v,(x,y))$$
defines a continuous, dually epi-translation invariant valuation on $\fconvf$ that is homogeneous of degree $n$. Hence, by Theorem~\ref{thm poly f}, for  $v_1,\ldots,v_k\in\fconvf$ and $\lambda_1,\ldots,\lambda_k\geq 0$, there exists a symmetric function $\bar{\oZ}:(\fconvf)^n\to\R$ such that
$$
\oZ(\lambda_1\, v_1+\cdots +\lambda_k\, v_k) = \sum_{\substack{i_1,\ldots, i_k\in \{0,\ldots,n\}\\i_1+\cdots+ i_k = n}} \binom{n}{i_1 \cdots i_k} \lambda_1^{i_1} \cdots \lambda_k^{i_k} \obZ(v_1 [i_1],\ldots,v_k [i_k]).
$$
If we assume in addition that $v_1,\ldots,v_k\in C^2(\R^n)$, then by (\ref{smooth}) and properties of the mixed discriminant, we can also write
\begin{align*}
\oZ(\lambda_1\, v_1+\cdots +\lambda_k\, v_k) 
&= \int_{\R^n} \zeta(x)\, \det (\Hess\,(\lambda_1\, v_1+\cdots +\lambda_k\, v_k)(x)) \d x\\
&= \sum_{i_1,\ldots,i_n=1}^k \lambda_{i_1}\cdots \lambda_{i_n} \int_{\R^n} \zeta(x)\,\det (\Hess v_{i_1}(x),\ldots,\Hess v_{i_n}(x)) \d x.
\end{align*}
It is now easy to see that for such functions $v_1,\ldots,v_k$ and $i_1,\ldots, i_k\in\{0,\ldots,n\}$ with $ i_1+\cdots+i_k=n$,
$$\bar{\oZ}(v_1[i_1],\ldots,v_k[i_k]) = \int_{\R^n} \zeta(x)\,\det (\Hess v_1(x)[i_1],\ldots,\Hess v_k [i_k]) \d x.$$
Note that this is a special case of \eqref{val A}.

\section{Classification Theorems}\label{class}

The classification of valuations that are epi-homogenous of degree 0 is straightforward.

\begin{theorem}\label{theorem 0-epi-homogeneous} 
A functional $\oZ\colon\fconvs\to\R$ is a continuous and epi-translation invariant valuation  that is epi-homogeneous of degree $0$, if and only if  $\,\oZ$ is constant.
\end{theorem}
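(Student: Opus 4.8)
The ``if'' direction is trivial: a constant functional is continuous, epi-translation invariant, and satisfies $\oZ(0\sq u) = \oZ(u)$ vacuously. For the ``only if'' direction, the plan is to exploit the degenerate behavior of epi-multiplication at $\lambda = 0$ together with continuity to collapse everything onto a single function. Recall that $0\sq u$ equals $\ind_{\{0\}}$ for every $u\in\fconvs$, since $0\sq u(x) = 0$ for $x = 0$ and $+\infty$ otherwise. Hence epi-homogeneity of degree $0$ gives, for every $u\in\fconvs$ and every $\lambda > 0$, the identity $\oZ(\lambda\sq u) = \oZ(u)$; I would then try to pass to the limit $\lambda\to 0^+$ and conclude $\oZ(u) = \oZ(\ind_{\{0\}})$, which is the desired constant.

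First I would verify that $\lambda\sq u$ epi-converges to $0\sq u = \ind_{\{0\}}$ as $\lambda\to 0^+$, for each fixed $u\in\fconvs$. This is where super-coercivity enters and is the one point requiring a small argument rather than pure formalism. Using Lemma \ref{hd conv lvl sets} (after noting $\lambda\sq u\in\fconvs\subset\fconv$), it suffices to check Hausdorff convergence of sublevel sets: $\{\lambda\sq u\le t\} = \lambda\{u\le t/\lambda\}$, and the super-coercivity of $u$ forces $\{u\le s\}$ to grow sublinearly in $s$, so that $\lambda\,\{u \le t/\lambda\}$ shrinks to $\{0\}$ (or is empty) as $\lambda\to 0^+$ for every $t$ not equal to $\min u$. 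Concretely, super-coercivity gives, for each $M>0$, a radius beyond which $u(x)\ge M|x|$; feeding this in bounds $\{u\le s\}$ inside a ball of radius $s/M + O(1)$, whence the rescaled set has radius $O(t/(M\lambda)\cdot\lambda) = O(t/M)$, which tends to $0$ as $M\to\infty$. Then continuity of $\oZ$ yields $\oZ(u) = \lim_{\lambda\to 0^+}\oZ(\lambda\sq u) = \oZ(\ind_{\{0\}})$.

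The main (and only real) obstacle is the epi-convergence claim $\lambda\sq u\to\ind_{\{0\}}$; everything else is immediate. One subtlety to handle carefully is the exceptional level $t = \min_{x}u(x)$ in Lemma \ref{hd conv lvl sets}: at that level $\{\lambda\sq u\le \lambda t\}$ need not shrink to a point, but since we only need convergence at levels $t\neq\min(\ind_{\{0\}})=0$, and $\lambda t\to 0$ while for $t>0$ the rescaled sublevel sets of $u$ do collapse, the hypotheses of the lemma are met and the conclusion follows. An alternative, slightly slicker route avoiding the level-set bookkeeping is to apply Corollary \ref{approximation lemma} and Lemma \ref{null lemma}: it is enough to show $\oZ(w+\ind_P) = \oZ(\ind_{\{0\}})$ for every polytope $P$ and affine $w$, and for such functions one can directly verify $\lambda\sq(w+\ind_P) = (\lambda\sq w) + \ind_{\lambda P}$ epi-converges to $\ind_{\{0\}}$ as $\lambda\to 0^+$ by elementary means, then invoke epi-homogeneity of degree $0$ and continuity; subtracting the constant $\oZ(\ind_{\{0\}})$ and applying Lemma \ref{null lemma} finishes the proof.
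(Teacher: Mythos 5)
Your proof is correct, but it takes a genuinely different route from the paper's. Your main argument shows directly that $\lambda\sq u$ epi-converges to $\ind_{\{0\}}$ as $\lambda\to 0^+$ for every $u\in\fconvs$, and then concludes $\oZ(u)=\lim_{\lambda\to 0^+}\oZ(\lambda\sq u)=\oZ(\ind_{\{0\}})$ from degree-$0$ epi-homogeneity and continuity. The verification of this epi-convergence is the right place where super-coercivity is used: for merely coercive $u$ (for example $u(x)=|x|$, which is a fixed point of epi-multiplication) the rescaled sublevel sets $\lambda\{u\le t/\lambda\}$ do \emph{not} collapse, so the argument genuinely needs $\fconvs$. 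Your handling of the exceptional level $t=0=\min\ind_{\{0\}}$ in Lemma~\ref{hd conv lvl sets} is also correct, since one only needs Hausdorff convergence for $t$ different from the minimum of the \emph{limit} function. The paper instead restricts $\oZ$ to functions of the form $\ell_y+\ind_K$, defines $\otZ_y(K)=\oZ(\ell_y+\ind_K)$ on $\cK^n$, observes that this is a continuous, translation invariant, $0$-homogeneous valuation on convex bodies (hence constant by the classical body-valuation fact), shows the constant is independent of $y$ by taking $K=\{0\}$, and then invokes Lemma~\ref{null lemma} to promote constancy on these special functions to constancy on all of $\fconvs$. Conceptually both arguments shrink to a single point via continuity and $0$-homogeneity; yours does so directly on $\fconvs$ at the cost of the epi-convergence computation, while the paper's avoids that computation by passing through convex bodies plus the inclusion--exclusion machinery behind Lemma~\ref{null lemma}. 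Your sketched alternative route (verify $\lambda\sq(w+\ind_P)\to\ind_{\{0\}}$ only for piecewise affine data and then apply Lemma~\ref{null lemma}) is essentially a hybrid of the two and is closest in spirit to the paper's proof.
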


\begin{proof}
Let $\oZ\colon\fconvs\to\R$ be a continuous and epi-translation invariant valuation that is epi-homogeneous of degree zero. We show that $\oZ$ is constant.
Indeed, for given $y\in\R^n$, the functional $\otZ_y\colon\cK^n\to\R$ defined by
$$
\otZ_y(K)=\oZ(\ell_{y}+\ind_K).
$$
is a zero-homogeneous, continuous and translation invariant valuation on $\cK^n$ and therefore constant. Such a constant cannot depend on $y$, as, choosing $K=\{0\}$, we obtain
$$
\ind_{\{0\}}+\ell_{y}=\ind_{\{0\}}+\ell_{y_0}
$$
for all $y,y_0\in\R^n$. Hence there exists $\alpha\in\R$ such that
$$
\oZ(\ind_K+\ell_{y})=\alpha
$$
for all $K\in\cK^n$ and $y\in\R^n$. Thus the statement follows from applying Lemma \ref{null lemma} to $\oZ-\alpha$.
\end{proof}

\goodbreak
By duality, we also obtain the following result.

\begin{theorem}\label{theorem 0-homogeneous} 
A functional $\oZ\colon\fconvf\to\R$ is a continuous and dually epi-translation invariant valuation  that is homogeneous of degree $0$, if and only if  $\,\oZ$ is constant.
\end{theorem}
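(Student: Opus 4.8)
The plan is to derive Theorem~\ref{theorem 0-homogeneous} from Theorem~\ref{theorem 0-epi-homogeneous} by means of the duality correspondence set up in Section~\ref{dual}. The sufficiency direction is trivial: a constant functional is plainly a continuous valuation, is invariant under adding affine functions, and satisfies $\oZ(\lambda v)=\lambda^0\oZ(v)$ for $\lambda\ge 0$, so it is continuous, dually epi-translation invariant and homogeneous of degree $0$.

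For the necessity direction, suppose $\oZ\colon\fconvf\to\R$ is a continuous and dually epi-translation invariant valuation that is homogeneous of degree $0$. First I would pass to the dual valuation $\oZ^*\colon\fconvs\to\R$, defined by $\oZ^*(u)=\oZ(u^*)$. By the result of \cite{ColesantiLudwigMussnig3} recalled in Section~\ref{dual}, $\oZ^*$ is a continuous valuation on $\fconvs$. The simple facts collected in Section~\ref{dual} then show that $\oZ^*$ inherits the relevant invariances and homogeneity: since $\oZ$ is dually translation invariant, $\oZ^*$ is translation invariant; since $\oZ$ is vertically translation invariant, so is $\oZ^*$; hence $\oZ^*$ is epi-translation invariant. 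Finally, because $\oZ$ is homogeneous of degree $0$, the dual $\oZ^*$ is epi-homogeneous of degree $0$ (this is exactly the statement that $\oZ^*$ is epi-homogeneous of degree $\alpha$ iff $\oZ$ is homogeneous of degree $\alpha$, applied with $\alpha=0$).

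Now I would apply Theorem~\ref{theorem 0-epi-homogeneous} to $\oZ^*$: it is constant on $\fconvs$, say $\oZ^*\equiv\alpha$ for some $\alpha\in\R$. Since the Legendre transform $u\mapsto u^*$ is a bijection between $\fconvf$ and $\fconvs$, every $v\in\fconvf$ is of the form $v=(v^*)^*$ with $v^*\in\fconvs$, so $\oZ(v)=\oZ((v^*)^*)=\oZ^*(v^*)=\alpha$. Therefore $\oZ$ is constant, which completes the proof.

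There is no real obstacle here: the entire argument is a formal transport of Theorem~\ref{theorem 0-epi-homogeneous} across the conjugation bijection, and every ingredient — continuity of the dual valuation, the translation of invariances and homogeneity under duality, and the bijectivity of the Legendre transform between $\fconvf$ and $\fconvs$ — has already been established in Section~\ref{dual} and Section~\ref{prelim_functions}. The only point requiring a modicum of care is bookkeeping the correspondence of properties, but this has been laid out explicitly in the paragraph preceding Theorem~\ref{McM finite}.
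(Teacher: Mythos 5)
Your proof is correct and follows exactly the route the paper takes: the paper states ``By duality, we also obtain the following result'' immediately after Theorem~\ref{theorem 0-epi-homogeneous}, and your argument simply spells out that transport — passing to $\oZ^*$, invoking the correspondence of invariances and homogeneity from Section~\ref{dual}, applying Theorem~\ref{theorem 0-epi-homogeneous}, and pulling back through the Legendre bijection. No gaps; this is the intended proof.
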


Next, we prove Theorem \ref{theorem n-homogeneous}. 
The ``if'' part of the proof follows from Proposition \ref{prop 1} and the subsequent remark. The proof of the theorem is completed by the next statement.

\begin{proposition}\label{only if part - volume} If $\,\oZ: \fconvs \to \R$ is a continuous, epi-translation invariant valuation, that is epi-homogeneous of degree $n$, then there exists $\zeta\in C_c(\R^n)$ such that
$$
\oZ(u)=\int_{\dom(u)}\zeta(\nabla u(x))\d x$$
for every $u\in\fconvs$.
\end{proposition}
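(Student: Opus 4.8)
The plan is to reduce the classification of $n$-homogeneous valuations on $\fconvs$ to the functions $\ell_y+\ind_K$ and then to Hadwiger's theorem (Theorem~\ref{n-homogeneous}). Fix $y\in\R^n$ and consider, as in Section~\ref{proof1}, the functional $\otZ_y(K)=\oZ(\ell_y+\ind_K)$ on $\cK^n$; by the Claim in Section~\ref{proof1} it is a continuous and translation invariant valuation. Since $\lambda\sq(\ell_y+\ind_K)=\ell_y+\ind_{\lambda K}$ and $\oZ$ is epi-homogeneous of degree $n$, we get $\otZ_y(\lambda K)=\lambda^n\otZ_y(K)$ for all $\lambda\ge 0$, so $\otZ_y$ is $n$-homogeneous. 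By Theorem~\ref{n-homogeneous} there is a constant $\zeta(y)\in\R$ such that $\otZ_y(K)=\zeta(y)\,V_n(K)$ for every $K\in\cK^n$; that is,
$$
\oZ(\ell_y+\ind_K)=\zeta(y)\,V_n(K)
$$
for all $y\in\R^n$ and $K\in\cK^n$. Note $\zeta(y)=\oZ(\ell_y+\ind_{[0,1]^n})$, and I expect to show this is continuous in $y$ (via epi-convergence $\ell_{y_k}+\ind_{[0,1]^n}\to\ell_y+\ind_{[0,1]^n}$ and continuity of $\oZ$) and has compact support: if $\zeta$ did not have compact support, a suitable choice of $u\in\fconvpa$ (or a careful limiting argument) would contradict continuity or finiteness of $\oZ$ — this is the point that will need the most care, and it is likely the main obstacle. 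The support statement should follow from the fact that continuous valuations on $\fconvs$ are ``locally bounded'' in an appropriate sense; concretely, one tests $\zeta$ on functions whose gradient ranges over a large ball and uses that $\oZ$ is real-valued and continuous, together with the inclusion-exclusion principle to patch pieces together.

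Once $\zeta\in C_c(\R^n)$ is in hand, define the candidate valuation $\oZ'(u)=\int_{\dom(u)}\zeta(\nabla u(x))\d x$; by Proposition~\ref{prop 1} and the remark after it, $\oZ'$ is a continuous, epi-translation invariant valuation on $\fconvs$ that is epi-homogeneous of degree $n$. For $u=\ell_y+\ind_K$ we have $\nabla u\equiv y$ on $\operatorname{int}K$, so $\oZ'(\ell_y+\ind_K)=\zeta(y)V_n(K)=\oZ(\ell_y+\ind_K)$. More generally, for a piecewise affine $u=\bigwedge_{i=1}^m(w_i+\ind_{P_i})\in\fconvpa$ with $\nabla w_i=y_i$, both $\oZ$ and $\oZ'$ agree on each piece $w_i+\ind_{P_i}$ (this uses vertical translation invariance to drop the additive constant in $w_i$ and translation invariance to move $P_i$ to $K$), and then the inclusion-exclusion principle (Theorem~\ref{inclusion_exclusion}, applied on $\fconvs$) forces $\oZ(u)=\oZ'(u)$ on all of $\fconvpa$, because the functions $\bigvee_{j\in J}(w_j+\ind_{P_j})$ appearing on the right-hand side are again piecewise affine restricted to a polytope, on which both functionals are given by the same formula.

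Finally, apply Lemma~\ref{null lemma} to the continuous valuation $\oZ-\oZ'$: it vanishes on every $w+\ind_P$ by the previous paragraph, hence $\oZ-\oZ'\equiv 0$ on $\fconvs$, which is the claim. The only genuinely delicate step, as noted, is establishing that $\zeta$ has compact support; the continuity and the evaluation $\oZ=\oZ'$ on $\fconvpa$ are then routine given the machinery already developed in Sections~\ref{inclu_exclu}--\ref{section hessian measures}.
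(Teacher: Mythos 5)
Your overall architecture matches the paper's: pass to $\otZ_y(K)=\oZ(\ell_y+\ind_K)$, invoke Hadwiger's theorem (Theorem~\ref{n-homogeneous}) to produce the scalar $\zeta(y)$, check continuity of $\zeta$, establish that $\zeta$ has compact support, and then conclude via Lemma~\ref{null lemma} applied to $\oZ-\oZ'$. The bookkeeping at the end (agreement on $\fconvpa$ plus Lemma~\ref{null lemma}) is fine, and in fact the separate discussion of inclusion-exclusion on $\fconvpa$ is redundant, since Lemma~\ref{null lemma} already packages exactly that argument.

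However, there is a genuine gap at the step you yourself flag: you do not actually prove that $\zeta$ has compact support. Saying you would ``test $\zeta$ on functions whose gradient ranges over a large ball'' and appeal to the fact that $\oZ$ is real-valued and continuous is not an argument; a continuous real-valued functional can perfectly well be nonzero on arbitrarily many inputs, so finiteness alone gives nothing. What is needed is a concrete sequence $u_k\in\fconvs$ for which continuity of $\oZ$ is visibly violated if $\zeta$ fails to vanish at infinity. The paper's choice is quite specific: given $|y_k|\to\infty$ with $\zeta(y_k)\neq 0$ and (after passing to a subsequence) $y_k/|y_k|\to e_n$, take $C_k$ to be the cylinder with unit $(n-1)$-ball base $B_k\subset y_k^\perp$ and height $1/\zeta(y_k)$ in the $y_k$-direction, so that
$$
\oZ(\ell_{y_k}+\ind_{C_k})=\zeta(y_k)\,V_n(C_k)=\kappa_{n-1}
$$
is a fixed positive constant for all $k$, while $\ell_{y_k}+\ind_{C_k}$ epi-converges (via Lemma~\ref{hd conv lvl sets}) to the indicator of the degenerate disk $B_\infty\subset e_n^\perp$, on which $\oZ$ vanishes because $V_n(B_\infty)=0$. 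The crucial device is normalizing the height of the cylinder by $1/\zeta(y_k)$ so that the value of $\oZ$ stays bounded away from zero along the sequence; without that normalization (or some equivalent construction) the contradiction does not materialize. As written, your proposal does not contain this idea, so the proof is incomplete precisely at its central step.
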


\begin{proof} For $y\in\R^n$, we consider the map $\otZ_y\colon\cK^n\to\R$ defined by
$$
\otZ_y(K)=\oZ(\ell_{y}+\ind_K).
$$
We know from the proof of Theorem \ref{McM sc} that $\otZ_y$ is a continuous and translation invariant valuation on $\cK^n$. Moreover, as the functional $\oZ$ is 
epi-homogeneous of degree $n$, the functional $\otZ_y$ is homogeneous of degree $n$.
By Theorem \ref{n-homogeneous}, for each $y\in\R^n$, there exists a constant,
 that we denote by $\zeta(y)$, such that
\begin{equation}\label{representation}
\otZ(K)=\zeta(y)\,V_n(K)
\end{equation}
for every $K\in\cK^n$. As $\oZ$ is continuous, the function $\zeta: \R^n\to \R$ is continuous.
We prove, by contra\-diction, that $\zeta$ has compact support. Assume that there exists a sequence $y_k\in\R^n$,
such that 
\begin{equation}\label{norm to infinity}
\lim_{k\to\infty}\vert  y_k\vert =+\infty
\end{equation}
and $\zeta(y_k)\ne0$ for every $k$. Without loss of generality, we may assume that 
\begin{equation}\label{to en}
\lim_{k\to\infty}\frac{y_k}{\vert  y_k\vert }=e_n
\end{equation}
where $e_n$ is the $n$-th element of the canonical basis of $\R^n$.

Let 
$$
B_k=\{x\in y_k^\perp\colon\vert  x\vert \le1\}, \quad B_\infty=\{x\in e_n^\perp\colon\vert  x\vert \le1\}.
$$
Define the cylinder
$$
C_k=\left\{x+ty_k\colon x\in B_k, t\in\left[0,\frac1{\zeta(y_k)}\right]\right\}.
$$
We have
$$
V_n(C_k)=\frac{\kappa_{n-1}}{\zeta(y_k)},
$$
where $\kappa_{n-1}$ is the $(n-1)$-dimensional volume of the unit ball in $\R^{n-1}$. 

For $k\in\N$, we consider the function
$$
u_k=\ell_{y_k}+\ind_{C_k}.
$$
This is a sequence of functions in $\fconvs$; using \eqref{norm to infinity} and \eqref{to en}, it follows from Lemma~\ref{hd conv lvl sets} that $u_k$ epi-converges to 
$$
u_\infty=\ind_{B_\infty}.
$$
In particular, by the continuity of $\oZ$ and \eqref{representation} we get
$$
0=\oZ(u_\infty)=\lim_{k\to\infty}\oZ(u_k).
$$
On the other hand, by the definition of $u_k$ and \eqref{representation},
$$
\oZ(u_k)=\zeta(y_k)\, V_n(C_k)=\kappa_{n-1}>0.
$$
This completes the proof.\end{proof}

\section{Valuations without Vertical Translation Invariance}\label{vert}

In this part we see that Theorems \ref{McM sc} and \ref{McM finite} are no longer true if we remove the assumption of vertical translation invariance. To do so, 
on the base of Theorem \ref{theorem 1} we construct the following example. For  $\eta \in C_c(\R^n)$ and $v\in\fconvf\cap C^2(\R^n)$, define 
\begin{equation}\label{val nh}
\oZ(v)=\int_{\R^n}e^{v(x)-\langle\nabla v(x),x\rangle}\,\eta(x)\det(\Hess v(x)) \d x.
\end{equation}
By Theorem \ref{theorem 1}, the functional defined in \eqref{val nh} can be extended to a continuous valuation on $\fconvf$. It is dually translation invariant but not vertically translation invariant. We choose $v\in\fconvf$ as
$$
v(x)=\tfrac12{\vert  x\vert ^2}.
$$
Note that the Hessian matrix of $v$ is everywhere equal to the identity matrix. Hence $\det(\Hess v)=1$  on $\R^n$. For $\lambda\geq 0$ we have
$$
\oZ(\lambda v)=\lambda^n\int_{\R^n} \eta(x)e^{\lambda \frac{\vert  x\vert ^2}{2}} \d x.
$$
If $\eta$ is non-negative and $\eta(x)\ge 1$ for every $x$ such that $1\le\vert  x\vert \le2$, then
$$
\oZ(\lambda v)\ge c\lambda^n e^{\lambda/2}
$$
for a suitable constant $c>0$ and for every $\lambda\geq 0$. Hence $\oZ(\lambda v)$ does not have  polynomial growth as $\lambda$ tends to $\infty$.

\begin{theorem} There exist continuous, dually translation invariant valuations on $\fconvf$ which cannot be written as  finite sums of
homogeneous valuations. 
\end{theorem}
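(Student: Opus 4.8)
The plan is to verify that the functional $\oZ$ on $\fconvf\cap C^2(\R^n)$ given by \eqref{val nh} — already written down in the discussion above — provides the required example, so it suffices to establish three things: that $\oZ$ extends to a continuous valuation on all of $\fconvf$, that this extension is dually translation invariant, and that it cannot be written as a finite sum of homogeneous valuations.

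For the first point, I would set $\zeta(t,x,y)=e^{\,t-\langle y,x\rangle}\,\eta(x)$. Since $\eta\in C_c(\R^n)$, this $\zeta$ lies in $C(\R\times\R^n\times\R^n)$ and has compact support with respect to the second variable, so Theorem~\ref{theorem 1}, applied with $i=0$ so that $[\Hess v(x)]_{n-i}=\det\Hess v(x)$, shows that
\[
\oZ(v)=\int_{\R^n\times\R^n}\zeta(v(x),x,y)\,\d\Theta_0(v,(x,y))
\]
is a well-defined continuous valuation on $\fconvf$ which coincides with \eqref{val nh} on $\fconvf\cap C^2(\R^n)$. For dual translation invariance, I would first check it on $\fconvf\cap C^2(\R^n)$: if $\ell(x)=\langle x,x_0\rangle$ is linear, then $\nabla(v+\ell)=\nabla v+x_0$ and $\Hess(v+\ell)=\Hess v$, hence $(v+\ell)(x)-\langle\nabla(v+\ell)(x),x\rangle=v(x)-\langle\nabla v(x),x\rangle$, so the integrand in \eqref{val nh} is unchanged and $\oZ(v+\ell)=\oZ(v)$. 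Since $v\mapsto v+\ell$ is continuous for epi-convergence and $\fconvf\cap C^2(\R^n)$ is dense in $\fconvf$ (e.g.\ by mollification, using that on $\fconvf$ epi-convergence is uniform convergence on compact sets), this identity extends to all of $\fconvf$ by continuity of $\oZ$.

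For the non-decomposability, I would take $v(x)=\tfrac12|x|^2$, for which $\Hess(\lambda v)\equiv\lambda\,\mathrm{Id}$ and the exponent in \eqref{val nh} is a fixed scalar multiple of $\lambda|x|^2$; this yields the explicit formula for $\oZ(\lambda v)$ recorded before the theorem, and choosing $\eta\ge 0$ with $\eta\ge 1$ on $\{1\le|x|\le 2\}$ and restricting the integral to that annulus gives $\oZ(\lambda v)\ge c\,\lambda^n e^{\lambda/2}$ for some $c>0$ and all $\lambda\ge 0$. If, on the contrary, $\oZ=\oZ_{\alpha_1}+\dots+\oZ_{\alpha_N}$ with each $\oZ_{\alpha_j}$ homogeneous of degree $\alpha_j\in\R$, then for fixed $v$ we would have $\oZ(\lambda v)=\sum_{j=1}^N\lambda^{\alpha_j}\oZ_{\alpha_j}(v)=O(\lambda^{\max_j\alpha_j})$ as $\lambda\to\infty$, contradicting the super-polynomial growth just obtained. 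This contradiction completes the argument (and in particular shows that $\oZ$ cannot be vertically translation invariant, consistently with Theorem~\ref{McM finite}). There is no serious obstacle here: everything rests on Theorem~\ref{theorem 1} and the elementary growth comparison; the only delicate points are that $\zeta$ must be seen to have compact support only in the \emph{second} variable — which holds because of $\eta$ — and the routine density step transferring dual translation invariance from $C^2$ functions to all of $\fconvf$.
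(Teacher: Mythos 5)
Your proposal follows the paper's own proof essentially verbatim: both arguments take $\oZ$ from \eqref{val nh}, invoke Theorem~\ref{theorem 1} with $i=0$ to extend it to a continuous valuation on $\fconvf$, verify dual translation invariance by the same cancellation of the linear term in $v-\langle\nabla v,x\rangle$, and then evaluate along $\lambda\mapsto\lambda v$ with $v(x)=\tfrac12|x|^2$ to exhibit super-polynomial growth. The extra details you supply --- identifying $\zeta(t,x,y)=e^{t-\langle y,x\rangle}\eta(x)$ explicitly, noting that $\eta$ provides the compact support in the \emph{second} variable that Theorem~\ref{theorem 1} requires, and spelling out the density step from $C^2$ to all of $\fconvf$ --- are exactly what the paper leaves implicit, so this is the same route, not a different one.

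One caveat, which you inherit from the paper rather than introduce yourself: with the exponent written as $v(x)-\langle\nabla v(x),x\rangle$ in \eqref{val nh}, one actually gets $v(x)-\langle\nabla v(x),x\rangle=-\tfrac12|x|^2$ for $v(x)=\tfrac12|x|^2$, hence $\oZ(\lambda v)=\lambda^n\int_{\R^n}\eta(x)\,e^{-\lambda|x|^2/2}\,\d x$, which grows at most polynomially in $\lambda$ (indeed, by Laplace's method it is $O(\lambda^{n/2})$ if $\eta(0)\neq 0$), so the claimed lower bound $\oZ(\lambda v)\ge c\lambda^n e^{\lambda/2}$ would fail. The exponent in \eqref{val nh} should be $\langle\nabla v(x),x\rangle-v(x)$; this still cancels under $v\mapsto v+\ell$ for linear $\ell$ (so dual translation invariance is unaffected), still corresponds to an admissible $\zeta$ for Theorem~\ref{theorem 1}, and now gives $\oZ(\lambda v)=\lambda^n\int_{\R^n}\eta(x)\,e^{\lambda|x|^2/2}\,\d x$, which has the super-exponential growth the argument needs. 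After this sign correction, both your argument and the paper's go through unchanged.
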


\goodbreak
As a consequence we also have the following dual statement.

\begin{theorem} There exist  continuous, translation invariant valuations on $\fconvs$ which cannot be written as finite sums of
epi-homogeneous valuations. 
\end{theorem}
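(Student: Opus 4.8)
The plan is to deduce this statement from the preceding theorem by duality. First I would fix the valuation $\oZ\colon\fconvf\to\R$ defined in \eqref{val nh}, where $\eta\in C_c(\R^n)$ is chosen non-negative with $\eta\geq 1$ on $\{x\colon 1\leq\vert x\vert\leq 2\}$. By Theorem~\ref{theorem 1}, $\oZ$ is a continuous valuation on $\fconvf$; it is dually translation invariant; and, as computed in Section~\ref{vert} for $v(x)=\tfrac12\vert x\vert^2\in\fconvf$, it satisfies $\oZ(\lambda v)\geq c\,\lambda^n e^{\lambda/2}$ for some $c>0$ and all $\lambda\geq 0$, so in particular $\lambda\mapsto\oZ(\lambda v)$ does not have polynomial growth.

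Next I would pass to the dual valuation $\oZ^*\colon\fconvs\to\R$, $\oZ^*(u)=\oZ(u^*)$. By the result of \cite{ColesantiLudwigMussnig3} recalled in Section~\ref{dual}, $\oZ^*$ is a continuous valuation on $\fconvs$, and since $\oZ$ is dually translation invariant, $\oZ^*$ is translation invariant. It then remains to show that $\oZ^*$ is not a finite sum of epi-homogeneous valuations, which I would establish by contradiction: assume $\oZ^*=W_1+\cdots+W_k$ with each $W_j\colon\fconvs\to\R$ epi-homogeneous of some degree $\alpha_j\in\R$.

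The key computation uses the self-conjugate function $u(x)=\tfrac12\vert x\vert^2$, for which $u\in\fconvs$ and $u^*=v=u$. Since $(\lambda\sq u)^*=\lambda\,u^*=\lambda v$ for $\lambda>0$ (this is the identity underlying the homogeneity correspondence recorded in Section~\ref{dual}, and can also be checked by a direct Legendre transform computation), I would obtain on the one hand
$$
\oZ^*(\lambda\sq u)=\oZ\big((\lambda\sq u)^*\big)=\oZ(\lambda v)\geq c\,\lambda^n e^{\lambda/2},
$$
and on the other hand, by epi-homogeneity of the summands,
$$
\oZ^*(\lambda\sq u)=\sum_{j=1}^k W_j(\lambda\sq u)=\sum_{j=1}^k \lambda^{\alpha_j}\,W_j(u).
$$
The right-hand side is a fixed finite linear combination of real powers of $\lambda$, hence is $O(\lambda^A)$ as $\lambda\to\infty$ with $A=\max\{\alpha_j\colon W_j(u)\neq 0\}$, contradicting the lower bound $c\,\lambda^n e^{\lambda/2}$ for large $\lambda$. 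Therefore no such decomposition exists, and $\oZ^*$ is the desired valuation.

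I do not expect a genuine obstacle here: this statement is essentially the dual restatement of the preceding theorem, and the only points requiring attention are the correspondences between $\oZ$ and $\oZ^*$ (continuity, and the passage from dual translation invariance to translation invariance, both from Section~\ref{dual}) together with the choice of a self-conjugate test function so that the epi-multiplication $\lambda\sq u$ dualizes cleanly to the ordinary scalar multiple $\lambda v$ on which the super-polynomial growth was established. Note that the argument does not use any additivity or continuity of the hypothetical summands $W_j$, only their epi-homogeneity.
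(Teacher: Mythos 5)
Your proof is correct and follows the route the paper has in mind: the paper states this result as the dual consequence of the preceding theorem, whose proof is the growth estimate $\oZ(\lambda v)\geq c\,\lambda^n e^{\lambda/2}$ for $v(x)=\tfrac12|x|^2$; your argument makes that duality transfer explicit, with the decisive and correct observation that $v$ is self-conjugate so that $\oZ^*(\lambda\sq v)=\oZ(\lambda v)$. Nothing is missing, and your closing remark that only epi-homogeneity (not continuity or the valuation property) of the hypothetical summands is needed is a small but accurate strengthening.
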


\goodbreak
\section{Epi-translation Invariant Valuations on Coercive Functions}\label{coercive}

In this part we prove that every continuous and epi-translation invariant valuation on $\fconv$ is trivial. 

\begin{theorem}\label{coer}
Every continuous, epi-translation invariant valuation $\oZ: \fconv\to\R$ is constant. 
\end{theorem}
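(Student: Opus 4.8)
The plan is to transfer the problem to valuations on convex bodies and then to use the extra room available in $\fconv$ as compared to $\fconvs$: a convex function supported on a pointed cone and growing only linearly on it is coercive but not super-coercive, and such functions arise as epi-limits of indicatrices of dilated polytopes, which will force every epi-homogeneous component of positive degree to vanish.

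First I would invoke Lemma~\ref{null lemma}: it suffices to find a constant $c$ with $\oZ(w+\ind_P)=c$ for every affine function $w\colon\R^n\to\R$ and every polytope $P$, because applying Lemma~\ref{null lemma} to $\oZ-c$ then gives $\oZ\equiv c$. Writing $w=\ell_y+\beta$ with $y\in\R^n$ and $\beta\in\R$, vertical translation invariance gives $\oZ(w+\ind_P)=\oZ(\ell_y+\ind_P)=:\otZ_y(P)$. As in the Claim in Section~\ref{proof1}, $\otZ_y\colon\cK^n\to\R$ is a continuous, translation invariant valuation; moreover $\otZ_y(\{0\})=\oZ(\ind_{\{0\}})$, and for fixed $K\in\cK^n$ the map $y\mapsto\otZ_y(K)$ is continuous, since $\ell_y+\ind_K$ epi-converges to $\ell_{y_0}+\ind_K$ as $y\to y_0$ by Lemma~\ref{hd conv lvl sets}. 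So it is enough to show that $\otZ_y$ equals the constant $\oZ(\ind_{\{0\}})$ for every $y\neq 0$; the case $y=0$ then follows by letting $y\to 0$.

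For fixed $y\neq 0$, McMullen's Theorem~\ref{McM} gives $\otZ_y=\otZ_{y,0}+\dots+\otZ_{y,n}$ with $\otZ_{y,i}$ continuous, translation invariant and homogeneous of degree $i$, so $\otZ_y(jP)=\sum_{i=0}^{n}j^{i}\,\otZ_{y,i}(P)$ is a polynomial in $j$. I would show $\otZ_{y,i}\equiv 0$ for $i\ge 1$ by verifying it on a dense set of polytopes. Call a polytope $P$ admissible if $\langle\cdot,y\rangle$ attains its minimum over $P$ at a single vertex; admissible polytopes are dense in $\cK^n$, since any polytope can be perturbed into one by moving a vertex of its $\langle\cdot,y\rangle$-minimal face by a small positive multiple of $-y$. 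For an admissible $P$, translate so that the minimal vertex is the origin; then $P\subseteq\{x\colon\langle x,y\rangle\ge 0\}$ and $P\cap y^\perp=\{0\}$, and $C:=\bigcup_{t\ge 0}tP$ is a closed convex cone with $C\subseteq\{\langle\cdot,y\rangle\ge 0\}$ and $C\cap y^\perp=\{0\}$, so $\langle x,y\rangle\ge c\,\abs x$ on $C$ for some $c>0$ and hence $u_\infty:=\ell_y+\ind_C$ is coercive, $u_\infty\in\fconv$. Since $0\in P$, the bodies $jP$ increase to $C$; the sublevel set $jP\cap\{\langle\cdot,y\rangle\le t\}$ is empty for $t<0$ and increases to the compact set $C\cap\{\langle\cdot,y\rangle\le t\}$ for $t\ge 0$, so by Lemma~\ref{hd conv lvl sets} the functions $u_j:=\ell_y+\ind_{jP}$ epi-converge to $u_\infty$. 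Thus $\otZ_y(jP)=\oZ(u_j)\to\oZ(u_\infty)$; a polynomial that is bounded along $j\to\infty$ is constant, so $\otZ_{y,i}(P)=0$ for $i\ge 1$. By density and continuity of the $\otZ_{y,i}$, this yields $\otZ_{y,i}\equiv 0$ for $i\ge 1$, hence $\otZ_y=\otZ_{y,0}$ is a $0$-homogeneous continuous translation invariant valuation on $\cK^n$, thus constant and equal to $\otZ_y(\{0\})=\oZ(\ind_{\{0\}})$. Together with the reduction above and Lemma~\ref{null lemma}, this completes the proof.

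The step I expect to be the main obstacle --- and the only place where coercivity, rather than super-coercivity, is used in an essential way --- is identifying the right sequence and verifying its epi-limit: one has to check that the dilated indicatrices $u_j=\ell_y+\ind_{jP}$ epi-converge to $u_\infty=\ell_y+\ind_C$, and that $u_\infty$ lies in $\fconv$, even though it is not super-coercive. This is precisely what breaks down on $\fconvs$, in accordance with the existence there of non-trivial epi-homogeneous valuations.
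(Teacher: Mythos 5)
Your proof is correct and takes essentially the same route as the paper: reduce to the valuations $\otZ_y$ on $\cK^n$, apply McMullen's decomposition, and use the fact that for a polytope $P$ whose $\ell_y$-minimum is attained at a single vertex, $\ell_y+\ind_{\lambda P}$ epi-converges as $\lambda\to\infty$ to $\ell_y+\ind_C$ for the cone $C$, which lies in $\fconv$ though not in $\fconvs$; boundedness of the polynomial $\lambda\mapsto\otZ_y(\lambda P)$ then kills all positive-degree pieces, and Lemma~\ref{null lemma} finishes. The only cosmetic differences are that you phrase the dense class of polytopes via unique $\ell_y$-minimizing vertices with an explicit perturbation (the paper says ``no facet parallel to $y^\perp$'', which is imprecise but means the same thing), you dispatch all degrees $i\ge 1$ at once instead of by descending induction, and you explicitly note the $y=0$ case by an epi-limit $y\to 0$, a small point the paper glosses over.
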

\begin{proof}
Let $\oZ:\fconv\to\R$ be a continuous, epi-translation invariant valuation. We need to show that there exists $\alpha\in\R$ such that $\oZ(u)=\alpha$ for every $u\in\fconv$. As in the proof of Theorem~\ref{McM sc} define for $y\in\R^n\backslash\{0\}$ the map ${\otZ}_y:\cK^n\to\R$ by
$${\otZ}_y(K)=\oZ(\ell_{y}+\ind_K)$$
for every $K\in\cK^n$. Since ${\otZ}_y$ is a continuous and translation invariant valuation, by Theorem \ref{McM} it admits a homogeneous decomposition
$${\otZ}_y = \sum_{j=0}^n \tilde{\oZ}_{y,j},$$
where each  $\tilde{\oZ}_{y, j}$ is a continuous, translation invariant valuation on $\cK^n$ that is homogeneous of degree $j$.\par
Next, we will show that $\tilde{\oZ}_{y,j}\equiv 0$ for all $1\leq j \leq n$. Since
$$\tilde{\oZ}_{y,0}(K)=\lim_{\lambda \to 0} \tilde{\oZ}_{y,0}(\lambda K) = \tilde{\oZ}_{y,0} (\{0\})$$
for every $K\in\cK^n$, this will then imply that $\tilde{\oZ}_y$ is constant. By continuity it is enough to show that $\tilde{\oZ}_{y,j}$ vanishes on polytopes for all $1\leq j \leq n$. Since $\tilde{\oZ}_y$ is continuous, it is enough to restrict to polytopes with no facet parallel to $y^\bot$. Therefore, fix such a polytope $P\in\cP^n$ of dimension at least one. By translation invariance we can assume that the origin is one of the vertices of $P$ and that $P$ lies in the half-space $\{x\in\R^n\,:\,\langle x,y\rangle\geq 0\}$. In particular, this gives $P\cap y^\bot = \{0\}$, $\langle x,y\rangle>0$ for all $x\in P\backslash \{0\}$ and moreover $\langle x,y\rangle>0$ and for all $x\in \lambda P \backslash \{0\}$ for all $\lambda >0$. Due to the choice of $P$ we obtain that $\ell_y+\ind_{\lambda P}$ is epi-convergent to $\ell_y+\ind_{C}$ as $\lambda \to \infty$ where $C$ is the infinite cone over $P$ with apex at the origin, that is $C$ is the positive hull of $P$. Furthermore $\ell_y+\ind_{C}\in\fconv$ since $y\neq 0$. By continuity this gives
$$\oZ(\ell_y+\ind_{C}) = \lim_{\lambda \to \infty} \oZ(\ell_y+\ind_{\lambda P}) = \lim_{\lambda \to \infty} \tilde{\oZ}_y(\lambda P) = \lim_{\lambda \to \infty} \sum_{j=0}^n \lambda^j \tilde{\oZ}_{y,j}(P).$$
Since the left side of this equation is finite, we have  $\tilde{\oZ}_{y,n}(P) = 0$. Otherwise, the right side would be $\pm \infty$, depending on the sign of $\tilde{\oZ}_{y,n}(P)$. Since $P$ was arbitrary, we obtain that $\tilde{\oZ}_{y,n}$ vanishes on all compact convex polytopes of dimension greater or equal
than $1$ and by continuity $\tilde{\oZ}_{y,n}\equiv 0$. Similarly, one can now show by
induction that also $\tilde{\oZ}_{y,j}\equiv 0$ for all $1\leq j \leq n-1$.\par
\goodbreak
We have proven so far that for every $y\in\R^n\backslash\{0\}$ there exists a constant $\alpha(y) \in\R$ such that $\tilde{\oZ}_y\equiv \alpha(y)$. Since
$$
\alpha(y)= \tilde{\oZ}_y(\{0\}) = \oZ(\ind_{\{0\}}),
$$
we obtain that $\alpha(y)$ is in fact independent of $y$, that is, there exists $\alpha\in\R$ such that $\tilde{\oZ}_y\equiv \alpha$ for every $y\in\R^n$. 
By the definition of $\tilde{\oZ}_y$ and the vertical translation invariance of $\tilde{\oZ}$ this gives $\oZ(\ell_y + \ind_K +\beta)=\alpha$ for every $K\in\cK^n$, $y\in\R^n\backslash\{0\}$ and $\beta\in\R$. The claim now follows from Lemma~\ref{null lemma}.
\end{proof}

If $u\in\fconv$, then its conjugate $u^*\in\fconvx$ and the origin is an interior point of its domain (see, for example, \cite[Theorem 11.8]{RockafellarWets}). Let $\fconvi$ be the set of functions in $\fconvx$ with the origin in the interior of its domain. Theorem \ref{coer} has the following dual.

\begin{theorem}
Every continuous, dually epi-translation invariant valuation $\oZ: \fconvi\to\R$ is constant. 
\end{theorem}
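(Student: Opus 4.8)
The plan is to deduce this statement from Theorem~\ref{coer} by passing to conjugate functions, exactly as the other dual statements in the paper are obtained. The first point is that conjugation $w\mapsto w^*$ is a bijection between $\fconv$ and $\fconvi$. That $u\in\fconv$ implies $u^*\in\fconvi$ is recorded in the excerpt (see \cite[Theorem 11.8]{RockafellarWets}). Conversely, if $w\in\fconvi$, say $|w|\le M$ on $\{|x|\le\delta\}$ for some $\delta>0$, then
$$
w^*(y)\ge\sup_{|x|\le\delta}\big(\langle x,y\rangle-w(x)\big)\ge\sup_{|x|\le\delta}\langle x,y\rangle-M=\delta\,|y|-M
$$
for all $y\in\R^n$, so $w^*\in\fconv$; since $w^{**}=w$ for every $w\in\fconvx$, the map $w\mapsto w^*$ is indeed a bijection between the two spaces.

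Now let $\oZ\colon\fconvi\to\R$ be a continuous, dually epi-translation invariant valuation and define $\oZ^*\colon\fconv\to\R$ by $\oZ^*(u)=\oZ(u^*)$; by the first step this is well defined. I would check that $\oZ^*$ is a continuous, epi-translation invariant valuation on $\fconv$. Continuity is immediate from Proposition~\ref{convergence conjugates}. The valuation property transfers because conjugation interchanges $\vee$ and $\wedge$ on convex functions in the manner used in \cite{ColesantiLudwigMussnig3}: whenever $u\vee v$ and $u\wedge v$ belong to $\fconv$, one has $(u\vee v)^*=u^*\wedge v^*$ and $(u\wedge v)^*=u^*\vee v^*$, both in $\fconvi$, so the valuation identity for $\oZ$ applied to $u^*,v^*$ is exactly the valuation identity for $\oZ^*$ applied to $u,v$. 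For epi-translation invariance: from $(u+\alpha)^*=u^*-\alpha$ and the vertical translation invariance of $\oZ$ we get $\oZ^*(u+\alpha)=\oZ^*(u)$; and if $\tau$ denotes translation by $x_0$, then $(u\circ\tau^{-1})^*=u^*+\langle x_0,\cdot\rangle$, so the dual translation invariance of $\oZ$ gives $\oZ^*(u\circ\tau^{-1})=\oZ^*(u)$.

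By Theorem~\ref{coer} there is a constant $\alpha\in\R$ with $\oZ^*(u)=\alpha$ for all $u\in\fconv$. Since conjugation maps $\fconv$ onto $\fconvi$, every $w\in\fconvi$ equals $u^*$ for some $u\in\fconv$, whence $\oZ(w)=\oZ(u^*)=\oZ^*(u)=\alpha$. Thus $\oZ$ is constant, as claimed.

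The step I expect to be the main obstacle is verifying in full that $\oZ^*$ is a valuation on $\fconv$, i.e.\ that when $u\wedge v$ is convex (and coercive) the conjugation identities above hold with all four functions lying in the relevant spaces. The identity $(u\wedge v)^*=u^*\vee v^*$ is automatic; the substantive point is that $u\wedge v$ being convex is equivalent to $u^*\wedge v^*$ being convex, and then $(u\vee v)^*$, which in general equals the closed convex hull of $u^*\wedge v^*$, coincides with $u^*\wedge v^*$. This is precisely the lemma underlying the duality of valuations in \cite{ColesantiLudwigMussnig3} for the pair $(\fconvs,\fconvf)$; for the pair $(\fconv,\fconvi)$ one either invokes that result directly or repeats its (short) proof verbatim, the argument being identical.
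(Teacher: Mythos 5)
Your proposal is correct and follows the same route the paper itself takes (the paper states this theorem without proof, as a dual of Theorem~\ref{coer}, after noting that conjugation maps $\fconv$ into $\fconvi$ and relying on the duality framework of Section~\ref{dual}). You fill in the details that the paper leaves implicit: the reverse inclusion $w\in\fconvi\Rightarrow w^*\in\fconv$ via the coercivity estimate $w^*(y)\ge\delta|y|-M$, the transfer of translation invariance through the identities $(u+\alpha)^*=u^*-\alpha$ and $(u\circ\tau^{-1})^*=u^*+\langle x_0,\cdot\rangle$, and the transfer of the valuation property and continuity via the lemma from \cite{ColesantiLudwigMussnig3}. Your observation that the cited duality lemma is stated there for $(\fconvf,\fconvs)$ but its proof — resting on the equivalence of convexity of $u\wedge v$ and $u^*\wedge v^*$ together with Proposition~\ref{convergence conjugates} — is insensitive to the particular conjugation-invariant pair of subspaces of $\fconvx$, is exactly the point needed to make this rigorous for the pair $(\fconv,\fconvi)$.
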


\bigskip


\end{document}